\documentclass[journal]{IEEEtran}
\usepackage{amsmath,amsfonts}
\usepackage{amsthm,amssymb}
\usepackage{textcomp}
\usepackage[ruled, linesnumbered]{algorithm2e}
\usepackage{stfloats}
\usepackage{url}
\usepackage{verbatim}
\usepackage{graphicx}
\usepackage{hyperref}

\usepackage{array}
\usepackage{booktabs} 
\newcolumntype{L}{>{$}l<{$}}

\usepackage[backend=bibtex, bibstyle=ieee]{biblatex}
\theoremstyle{plain}
\newtheorem{theorem}{Theorem} 
\newtheorem{proposition}[theorem]{Proposition}
\newtheorem{lemma}[theorem]{Lemma}
\newtheorem{coro}[theorem]{Corollary}
\theoremstyle{definition}
\newtheorem{definition}{Definition}
\newtheorem{obs}[theorem]{Observation}

\newcommand{\cL}{\mathcal{L}}
\newcommand{\te}{{\mathbf{:{\mkern-5mu}t}}} 
\newcommand{\he}{{\mathbf{:{\mkern-5mu}h}}}
\newcommand{\ie}{{\mathbf{:{\mkern-5mu}i}}} 
\newcommand{\rn}{{\mathbf{:{\mkern-5mu}r}}} 
\newcommand{\boldmu}{{\boldsymbol{\mu}}}

\newcommand{\dbar}[1]{{\overrightarrow{#1}}}
\newcommand{\ubar}[1]{{\overline{#1}}}
\newcommand{\dab}{\delta_{a,b}}
\newcommand{\dha}{\delta_{0,a}}
\newcommand{\dhab}{\delta_{0,a,b}}
\newcommand{\da}{\delta_{a}}
\newcommand{\db}{\delta_{b}}
\newcommand{\fu}{\mathfrak{u}}

\DeclareMathOperator{\cont}{Cont}
\DeclareMathOperator{\adme}{AE}

\usepackage[export]{adjustbox}
\usepackage{xcolor}

\newcommand{\com}[1]{} 
\usepackage[normalem]{ulem} 
\definecolor{changecolor}{RGB}{192,64,0}

\begin{document}

\title{A $\mu$-distance for semidirected orchard phylogenetic networks}

\author{
  Gerard Ribas${}^1$
  \and
  Joan Carles Pons${}^1$
  \and
  C\'ecile An\'e${}^{2}$, 
 \thanks{${}^1$ Department of Mathematics and Computer Science,
 University of the Balearic Islands, Spain.}
 \thanks{${}^2$ Department of Statistics, University of Wisconsin - Madison, USA.}
}


\maketitle

\begin{abstract} 
In evolutionary biology, phylogenetic networks are now widely used
to represent the historical relationships between species and population,
when this history includes reticulation events such as hybridization,
gene flow and admixture between populations.
Semidirected phylogenetic networks are appropriate models when
the direction of some edges and the root position are not identifiable
from data.
Comparing semidirected networks is important in many applications.
For rooted and directed networks, a $\mu$-representation was
originally introduced to distinguish tree-child networks,
and has since been extended in two different directions:
to the larger class of orchard directed networks by adding an extra
component that counts paths to reticulations;
and to semidirected networks, through an edge-based variant.
However, the latter does not provide a distance between
semidirected and orchard networks.
We introduce here a new edge-based $\mu$-representation capable of
distinguishing distinct 
orchard binary semidirected networks.
For this class, we provide a reconstruction algorithm
and therefore obtain a true distance
that is computable in polynomial time. 
\end{abstract}

\begin{IEEEkeywords}
$\mu$-representation, cherry reduction, multi-rooted, admixture graphs, tree-child

\end{IEEEkeywords}

\section{Introduction} \label{sec:intro}

\IEEEPARstart{P}{hylogenetic} networks have become central to the representation
of biological processes such as hybridization, gene flow,
admixture, or horizontal gene transfer,
by which genetic material is transferred non-vertically.
These processes, generally referred to as `reticulations',
can be represented in rooted phylogenetic networks,
which generalize rooted trees.
In these networks, populations or individuals that received genetic materials
from multiple parents are represented as `hybrid' nodes with
multiple incoming edges.
Many inference methods are now available to estimate
phylogenetic networks from large genomic data
\cite[see e.g.][for reviews]{2018degnan,2022HibbinsHahn-review,2025Kong-review}.
Many of these methods infer a semidirected network, which is
the most that may be identifiable under many models,
instead of a rooted and fully directed network
\cite[e.g.][]{2023XuAne_identifiability,2025AABR-galledtreechild,2025Englander-level2-JC,2025Rhodes-circular}.
In semidirected networks, hybrid edges (that go into hybrid nodes)
remain directed. The other edges 
may be undirected.
The exact position of the root is constrained by the direction
of hybrid edges, but is generally unknown, and is not identifiable
under many models.

Multi-rooted networks have recently been considered
\cite{2016Wallbank-fig5multiroots,2019SoraggiWiuf,2022Huber-forestbased,2025MaxfieldXuAne-mu-semidirected}.
These networks may be useful to study groups whose ancestral relationships
of common ancestry may be too difficulty to be recovered reliably,
when primary interest lies in the more recent events of
gene transfer or introgression between these groups.

Comparing phylogenetic networks is fundamental to many endeavors.
For example, measuring the dissimilarity between estimated networks
and the ground truth network is crucial in simulation studies,
to quantify the accuracy of an inference method,
or to compare the accuracy of different methods.
Comparing networks is also fundamental to quantify network variability
within posterior sample of networks from a Bayesian analysis.
Distances between networks in a sample may be used, for example,
to visualize the posterior distribution with distance-based dimension reduction
or detect clustering in this distribution.

Several dissimilarity measures have been proposed for rooted networks,
either of polynomial time complexity
\cite[e.g.][]{2009Cardona-treechild,2009Cardona-nakhlehmetric,2010HusonRuppScornavacca,2010Nakhleh-metric}
or NP-hard
\cite{2023Landry-cherrydistance,2025Marchand-contractions}.
When choosing a dissimilarity for some application,
one has to choose between a polynomial complexity
or the power of discriminating
between any two non-isomorphic networks (with a non-zero dissimilarity).
Indeed, the isomorphism problem
is not generally solvable in polynomial time for rooted network
\cite{2021JanssenMurakami,2014Cardona},
and therefore not for semidirected networks either
(as semidirected networks include rooted networks).

For semidirected networks with a single root (of unknown placement)
\cite{2023LinzWicke} proposed the cut-edge-transfer distance,
but it is NP-hard to compute.
Only one polynomial-time dissimilarity has been proposed
so far for semidirected networks:
\cite{2025MaxfieldXuAne-mu-semidirected}
considered general phylogenetic semidirected networks and
showed their dissimilarity to be a distance in the class of tree-child
(non-binary) semidirected networks.
Their dissimilarity is based on an encoding of the network, called an
edge-based $\mu$-representation, in which each edge contributes an entry
that records the number of paths to each leaf starting from this edge
in either one or both directions.
This encoding was first introduced by \cite{2009Cardona-treechild}
for rooted networks, with one entry per node.
Here we extend the edge-based encoding of \cite{2025MaxfieldXuAne-mu-semidirected}
in two ways. First, we include the number of paths to hybrid nodes
as in \cite{2024Cardona-extendedmu}.
Second, the encoding stores two vectors for any edge that may be
subdivided to place the root,
enriching the approach by \cite{2025MaxfieldXuAne-mu-semidirected}.
We prove that the dissimilarity obtained by comparing our
$\mu$-representation of two networks provides a distance, in the space of
orchard semidirected binary networks.

Orchard networks received a lot of attention in the rooted case
\cite{2019Erdos,2021Bai-ancestralprofile,2021JanssenMurakami,2022vanIerselJanssenJonesMurakami,2026Reichling-murep-semidirected-orchard},
forming a large class that includes so-called tree-child networks.
They are computationally attractive thanks to their
recursive reducibility via tree cherries and reticulate cherries,
which are small local structures adjacent to two leaves.
Reducing these cherries decreases the complexity of
the network, 
which enables inductive arguments, and
efficient reconstruction and comparison algorithms.

In the semidirected context,
\textcite{2026Holtgrefe-multisemidirected} recently defined orchard networks,
with one or multiple roots.
Compared to their work, we mostly focus here on binary networks but
consider a large class of semidirected networks, in which
any edge may have a fixed direction, beyond hybrid edges and those below.
Although semidirected networks are defined differently in
\textcite{2026Holtgrefe-multisemidirected}, our definition of cherries
are equivalent and our orchard definition is similar.
We recall the
differences and similarities between our frameworks where needed.

Section \ref{sec:semidir} defines semidirected multirooted $\cL$-networks,
and their cherries: tree cherries and reticulate cherries.
Orchard $\cL$-networks are defined as those that can be reduced
to isolated notes by a series of cherry reductions.
Section \ref{sec:semi-edge-mu} introduces our edge-based $\mu$-representation
of $\cL$-networks, which is richer than that of
\cite{2025MaxfieldXuAne-mu-semidirected}, in particular by borrowing
the extension by \cite{2024Cardona-extendedmu}.
We define tree and reticulate cherries and their reductions
on edge-based $\mu$-representations directly,
and prove that the operations of reducing cherries and
taking the $\mu$-representation of a network commute.
Finally, Section \ref{sec:classification} presents the main result
that our $\mu$-representation characterizes the $\cL$-network
if it is orchard, with an algorithm to reconstruct the network
from its representation. This $\mu$-representation
leads to a dissimilarity measure between semidirected networks
of polynomial computational cost,
and that is a distance between orchard $\cL$-networks.
To conclude, Section \ref{sec:example} provides examples to illustrate
our algorithm, to show the benefits of our $\mu$-representation
compared to that of \cite{2025MaxfieldXuAne-mu-semidirected},
and counterexamples with non-orchard networks.

\section{Semidirected networks} \label{sec:semidir}

We follow definitions from \textcite{2025MaxfieldXuAne-mu-semidirected}.

\begin{definition}
  A \emph{semidirected graph} $N$ is a tuple $(V,E)$,
  where $V$ is the set of vertices, and $E=E_U \sqcup E_D$ is the set of edges,
  $E_U$ being the set of undirected edges and $E_D$ the set of directed edges.
\end{definition}

If an edge $e\in E$ is incident to nodes $u$ and $v$ in $V$,
we denote $e$ as $uv$ generally.
To indicate directionality or lack thereof, we denote $e$ as $\ubar{uv}$ if $e\in E_U$,
otherwise as $\dbar{uv}$ if $e\in E_D$ is directed from $u$ to $v$. 

A \emph{directed graph} is a semidirected graph with no undirected edges, i.e. $E_U=\emptyset$.

For $v\in V$, $\deg_i(v,N)$ denotes the number of directed edges with $v$ as their child,
$\deg_o(v,N)$, the number of directed edges with $v$ as their parent,
$\deg_u(v,N)$ the number of undirected edges in $N$ incident to $v$, and
$\deg(v)=\deg_i(v)+\deg_o(v)+\deg_u(v)$ the \emph{degree} of $v$.
We may omit $N$ when no confusion is likely.
For an edge $\dbar{uv}$, we say that $u$ is its
parent and $v$ is its child.
Hence $u$ is a parent of $v$ and $v$ is a child of $u$.
Let $v$ be a node in $N$. Then,

\begin{itemize}
  \item $v$ is called a \emph{root} if $\deg_i(v)=\deg_u(v)=0$, i.e. if it only has outgoing edges,
  \item $v$ is called a \emph{leaf} if $\deg_o(v)=\deg_u(v)=0$,
  \item $v$ is called a \emph{tree node} if $\deg_i(v)\le 1$,
  \item $v$ is called a \emph{hybrid node} or a \emph{reticulation node} if $\deg_i(v) > 1$.
\end{itemize}

Furthermore, we shall say that $N$ is \emph{binary} if roots have degree 0, 2 or 3,
leaves have degree 0 or 1,
and the remaining nodes have degree 3.

A \emph{semidirected cycle} in $N$ is a cycle (when ignoring edge directions)
if its undirected edges can be directed so that it becomes a directed cycle.
A semidirected graph is said to be \emph{acyclic} if it does not contain any
semidirected cycle. We refer to directed acyclic graphs as DAGs and to
semidirected acyclic graphs as SDAGs.

A path is a \emph{semidirected path} if its undirected edges can be
directed so that it becomes a directed path. From now on, we only consider
semidirected paths, called `paths' for simplicity.

\begin{figure}[t]
\centering
\includegraphics[scale=1.1,valign=m]{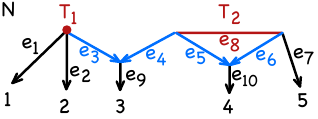}
\\\smallskip 
{\small\begin{tabular}{L|L}
T_i/e_i & \mu\text{-entry} \\
\midrule
T_1    & 111100\rn \\
e_1    & 010000\te, 101100\ie \\
e_2    & 001000\te, 110100\ie \\
e_3    & 100100\he, 011000\ie \\
\midrule
T_2    & 300121\rn \\
e_4    & 100100\he, 200021\ie \\
e_5    & 100010\he, 200111\ie \\
e_6    & 100010\he, 200111\ie \\
e_7    & 000001\te, 300120\ie \\
e_8    & 200110\te, 100011\te \\
\midrule
e_9    & 000100\te \\
e_{10} & 000010\te \\
\end{tabular}}
\caption{
  Top: example of a semidirected $[5]$-network $N$. It has two
  root components $T_1$ and $T_2$, unresolved and highlighted in red.
  $T_1$ is trivial whereas $T_2$ is not. $e_8$ is the unique undirected edge.
  Bottom: $\mu$-representation $\mu(N)$.
  Brackets and parentheses have been removed for readability.
}\label{fig:ex-network}
\end{figure}

Given a set of taxa $\cL$, an \emph{$\cL$-network} $N=(V,E,\varphi)$ is an SDAG
$(V,E)$ whose leaves are tree nodes, and
with an injection $\varphi$ from its leaves to $\cL$.
We do not require $\varphi$ to be a bijection
because we will consider reductions that prune a leaf.
Keeping $\cL$ unchanged, including labels that may be absent
from $N$ will simplify notations.
Unless said otherwise, we will assume that $\cL=[n]=\{1,\ldots,n\}$
for some positive integer $n$,
which corresponds to choosing a fixed ordering of the full set
of leaf labels.

We say $N$ is a \emph{trivial forest} on $\cL_0\subseteq\cL$ if $N$ is the union of
isolated nodes labeled bijectively with $\cL_0$.

As in \cite{2025MaxfieldXuAne-mu-semidirected}, for nodes $u,v\in V$ we
denote $u\sim v$ if there is a path connecting $u$ and $v$ that consists
solely on undirected edges. Note that $\sim$ is an equivalence relation,
and we shall denote $[u]$ the class of $u$ under $\sim$.
The \emph{contraction} of $N$, denoted $\cont(N)$, is the directed graph
obtained by contracting every undirected edge in $N$, which can be seen as the
quotient graph of $N$ under $\sim$, i.e. $\cont(N)=N/{\sim}$,
with vertices $\{[u]|u\in V\}$.
Note that $N/{\sim}$ might not be a binary $\cL$-network (nor a $\cL$-network) even if $N$ is.
We say that $N$ is \emph{complete} if edges incident to leaves are directed (to the leaves)
and $\deg_i([u], N/{\sim})=0$ for every class $[u]$ with more than one node.

From now on, if not said explicitly, we will always consider $\cL$-networks to
be complete and binary, and will be typically denoted as $N$.
The network in Fig.~\ref{fig:ex-network} illustrates the concepts
and notations here and in later sections.

Given a root $[u]$ of $N/{\sim}$, the \emph{root component} associated to $[u]$
is the subgraph of $N$ induced by the nodes of $N$ in $[u]$.
It is a tree with no directed edges \cite{2025MaxfieldXuAne-mu-semidirected}.
Let $T$ be a root component of $N$. If $T$ consists of a single node $u$ in $N$,
which must be a root node then,
we say that $T$ is a \emph{trivial} root component.
If so, we define the \emph{degree} of $T$ as the degree of $u$ in $N$.
If $T$ is a trivial root component of degree 0 or 2,
we say that $T$ is a \emph{resolved} root component.
Otherwise, if $T$ is not trivial or is of degree 3,
$T$ is called an \emph{unresolved} root component.
If $\{u\}=T$ is a root component, we say that $u$ is a resolved
(or unresolved) root if $T$ is resolved (or unresolved, respectively).

Let $u,v$ be nodes and $e$ be an edge in $N$.
We denote by $M_N(u,v)$ the set of paths in $N$ from $u$ to $v$;
and by $M_N(u,v;e) \subseteq M_N(u,v)$ the subset of
paths that do not contain $e$. Also, we denote $m_N(u,v)=|M_N(u,v)|$
and $m_N(u,v;e)=|M_N(u,v;e)|$.
If $N$ is clear from the context, we shall omit the subscript.
See Fig.~\ref{fig:specialretcherry} for an illustration.

\begin{figure}
  \centering\includegraphics[scale=1.1]{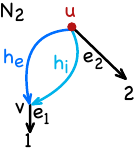}
  \caption{
  Binary $[2]$-network $N_2$, to illustrate counting paths
  $m(u,1; e)$ that avoid an edge $e$.
  For example, $m(u,1; h_e)=1$ from the path through $h_i$,
  and $m(u,1)=2$ in total.
  We will see in section~\ref{sec:semidir} that $(1,2)$ is a
  reticulate cherry and that $N_2$ is an orchard network.
  The $\mu$-entry for edge $h_e$,
  defined later in section~\ref{sec:semi-edge-mu}, is
  $\{((1,1,0), \he),((1,1,1), \ie)\}$.
  }\label{fig:specialretcherry}
\end{figure}

\begin{lemma} \label{lem:bij-paths}
  Let $u,x$  be nodes in an $\cL$-network $N$, which may be non-binary.
  Let $e$ be an edge incident to $u$ which forms a path from $u$ to
  its other node $v$, that is, $e=\ubar{uv}$ or $\dbar{uv}$.
  Then, $M(u,x;e)\sqcup M(v,x;e)$
  is in bijection with $M(u,x)$. In particular, $m(u,x)=m(u,x;e)+m(v,x;e)$.
\end{lemma}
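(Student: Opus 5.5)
The bijection is built by splitting the paths in $M(u,x)$ according to whether they use $e$, and then using acyclicity to say exactly where $e$ can sit on such a path.

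\emph{Defining the map.} Take a path $p\in M(u,x)$. If $e\notin p$, send $p$ to itself in $M(u,x;e)$. If $e\in p$, we first argue that $e$ is the first edge of $p$ and is traversed from $u$ to $v$. Since $p$ is a path starting at $u$ and $e$ is incident to $u$, a traversal of $e$ at any later position would return to $u$ or leave from $u$ again. Either way the walk visits $u$ twice, which a path cannot do. Hence $e$ is the first edge, traversed from $u$ to $v$. Removing it leaves a path from $v$ to $x$. This remaining path does not use $e$, because an edge appears at most once in a path. So we map $p$ to this path in $M(v,x;e)$.

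\emph{Showing it is a bijection.} The inverse sends $q\in M(u,x;e)$ to $q$ itself. It sends $q\in M(v,x;e)$ to the concatenation $e\cdot q$. We must check that $e\cdot q$ is a semidirected path.
\begin{itemize}
\item Orientation compatibility: $e$ is either $\dbar{uv}$ or undirected, and can be oriented $u\to v$. The edges of $q$ can be oriented consistently from $v$ onwards. So the orientations glue into a directed walk.
\item No repeated vertex: we need $u\notin q$. If $u$ lay on $q$, then the sub-path of $q$ from $v$ to $u$, together with $e$, would form a cycle. That cycle can be oriented as a directed cycle, i.e.\ it is a semidirected cycle. This contradicts $N$ being an SDAG. (If $q$ were allowed to revisit $u$ at all, this same argument rules it out.)
\end{itemize}
Note that $u\neq x$ is not required: if $u=x$, the trivial path lies in $M(u,x;e)$, and $M(v,u;e)$ is empty by acyclicity.

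\emph{Conclusion.} The two maps are clearly inverse to each other. The two pieces $M(u,x;e)$ and $M(v,x;e)$ are disjoint as sets of paths, since they start at different vertices $u\neq v$. Taking cardinalities gives $m(u,x)=m(u,x;e)+m(v,x;e)$.

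The main obstacle is the well-definedness of the concatenation $e\cdot q$ as a semidirected path. This is the only step that uses acyclicity, and it also relies on the convention that paths do not repeat vertices. Binarity is never used, which is consistent with the statement allowing non-binary $N$.
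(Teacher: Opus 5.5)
Your proof is correct and uses the same bijection as the paper: $P\mapsto P$ on $M(u,x;e)$ and $P\mapsto eP$ on $M(v,x;e)$. You make explicit what the paper dismisses as ``clearly'', namely that $e$ can only occur as the first edge of a path starting at $u$, and that acyclicity (no semidirected cycle through $e$) guarantees that $eP$ is a genuine path.
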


\begin{proof}
  Let $f\colon M(u,x;e)\sqcup M(v,x;e)\longrightarrow M(u,x)$ defined as
  follows. If $P\in M(u,x;e)$ then define $f(P)=P$. Otherwise,
  if $P\in M(v,x;e)$ then define $f(P)=eP$.
  Clearly, $f$ is well-defined and it is a bijection.
\end{proof}

If $T$ is a root component, we define the \emph{set of admissible edges} of $T$,
denoted as $\adme(T)$, as the set of edges in $T$ plus the edges in $N$
adjacent to $T$.

\begin{obs} \label{obs:rc-separated}
  Edges adjacent to (but not part of) a root component must be outgoing.
  Therefore, an edge can be adjacent to at most one root component.
\end{obs}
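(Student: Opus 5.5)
The argument goes directly from the definitions of root component and completeness. Let $T$ be a root component, associated with a root class $[u]$ of $N/{\sim}$. Let $e$ be an edge adjacent to $T$ but not in $T$, with one endpoint $a\in[u]$ and the other endpoint $b\notin[u]$.

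First, $e$ is not undirected. If $e$ were undirected, then $a\sim b$ by definition of $\sim$, so $b\in[u]$, a contradiction. Hence $e$ is directed, either $\dbar{ab}$ or $\dbar{ba}$.

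Next, $e$ is outgoing from $T$. Suppose instead $e=\dbar{ba}$. Since $a\not\sim b$, this edge survives in $\cont(N)=N/{\sim}$ as a directed edge from $[b]$ to $[a]=[u]$. Then $\deg_i([u],N/{\sim})\ge 1$, contradicting the assumption that $[u]$ is a root of $N/{\sim}$, which requires in-degree $0$. So $e=\dbar{ab}$ is outgoing from $T$. Completeness is not even needed here: being a root of the quotient suffices. If $T$ is trivial, say $T=\{u\}$, the same reasoning says $u$ is a root of $N$, so all its incident edges are outgoing.

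For the second claim, suppose $e$ is adjacent to two distinct root components $T\neq T'$ and lies in neither. By the first part applied to $T$, $e$ is directed away from its endpoint in $T$, so its other endpoint is its child. By the first part applied to $T'$, the endpoint in $T'$ is the parent of $e$. The parent must lie in $T$, so $T\cap T'\neq\emptyset$. This is impossible because distinct equivalence classes are disjoint.

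There is one subtlety. An edge lying inside some root component $T'$ could also touch another component $T$. But both of its endpoints are in $T'$, and classes are disjoint, so it cannot touch any other class. Thus each edge is adjacent to, or contained in, at most one root component, and the observation holds. No step is a real obstacle; the only care needed is keeping "adjacent to" distinct from "in" $T$.
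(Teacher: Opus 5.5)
Your proof is correct. The paper states this observation without proof, and your argument is exactly the reasoning it leaves implicit: since $[u]$ is a root of $N/{\sim}$ and undirected edges never leave a $\sim$-class, any edge leaving $T$ must be directed out of $T$, and a directed edge cannot be outgoing from both of its endpoints, which lie in disjoint classes. Your side remarks are also accurate: completeness is not needed for the first claim, and the argument in fact gives $\adme(T)\cap\adme(T')=\emptyset$ for $T\neq T'$.
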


\subsection{Cherries in $\cL$-networks} \label{subsec:semi-cherries}

\begin{definition}[semidirected cherry]
\label{def:semi-net-cherry}
  Let $N$ be an $\cL$-network and $a\neq b$ non-isolated leaves in $N$.
  Let $p_a$ and $p_b$ be the parents of $a$ and $b$ in $N$.
  We call $(a,b)$ a \emph{tree cherry} in $N$ if $p_a = p_b$.
  We call $(a,b)$ a \emph{reticulate cherry} in $N$ if $p_a$ is a hybrid node
  and if $p_b$ is a parent of $p_a$.
  We say that $(a,b)$ is a \emph{cherry} in $N$, or \emph{reducible} in $N$,
  if it is either a tree cherry or a reticulate cherry.\\
  If $(a,b)$ is a reticulate cherry, let $v$ be the node adjacent to $p_a$
  other than $a$ and $p_b$.
  We say that $\dbar{p_b p_a}$ is the cherry's \emph{internal edge}
  and $\dbar{v p_a}$ is the cherry's \emph{external edge}.
\end{definition}

Note that cherries in semidirected networks have already been defined
by \textcite{2026Holtgrefe-multisemidirected}. In their work, semidirected networks
may also have multiple roots. They are defined slightly less generally than here,
but our definition of cherries is equivalent.
In \cite{2026Holtgrefe-multisemidirected}, semidirected networks must
come from the `deorientation' of a (multi-)rooted network, by which all
tree edges become undirected. Effectively, this means that directed edges
outgoing from root components must be hybrid edges,
whereas we allow them to be tree edges.
In \cite{2026Holtgrefe-multisemidirected}, networks may not have parallel
edges, whereas we do not impose this restriction
(as in $N_2$ from Fig.~\ref{fig:specialretcherry} for example).
Also, in \cite{2026Holtgrefe-multisemidirected} the label set 
of a network may be
strictly larger than its leaf set, as roots may be labelled and have degree 1.
Their definition of `binary' semidirected networks also differs from ours,
as roots must be of degree 1 or 2. However, this difference is only technical,
as they favor `nice' rootings to obtain rooted networks, in which
roots are placed at existing nodes. The exception is that a new root may
subdivide a hybrid edge whose parent is a labelled root, which then becomes
a labelled leaf.
Our framework focuses on edges, with rootings that subdivide edges as in
the traditional tree framework, and with sets of admissible edges (for rooting)
that include directed edges outgoing from a root component.
However, \cite{2026Holtgrefe-multisemidirected} define cherries accordingly,
such that their definition and Definition~\ref{def:semi-net-cherry}
become equivalent.

We will often need to distinguish between different
types of cherries, illustrated in Fig.~\ref{fig:reductions-N}.

\begin{definition}[cherry types]
\label{def:semi-net-cherrytypes}
  Let $(a,b)$ be a cherry in an $\cL$-network $N$,
  and $p_a$, $p_b$
  as in Def.~\ref{def:semi-net-cherry}.
  If $p_b$ is a resolved root, we say that $(a,b)$ is of \emph{type} (r2).
  If $p_b$ is an unresolved root, $(a,b)$ is of type (r3).
  Otherwise, let $e$ be the edge incident to $p_b$ other than $p_b b$
  and $p_b a$ or $p_b p_a$. We say that $(a,b)$ is of type (d)
  if $e$ is directed, and of type (u) if $e$ is undirected.
  We also prepend T or R for tree and reticulate cherries, respectively.
  For example, $(a,b)$ is of type T(r2) if it is a tree cherry of type (r2).
\end{definition}

\begin{definition}[node suppression]
  \label{def:suppress}
  A node $v$ in a $\cL$-network $N$ is said
  \emph{suppressible} if it is not a leaf and either
  \begin{itemize}
    \item $v$ is of degree 2 and $\{v\}$ is not a root component, or
    \item $v$ is of degree 1, $\{v\}$ is a root component, and its child edge
    is a tree edge.
  \end{itemize}
  If $v$ is suppressible, \emph{suppressing} $v$ from $N$ consists in
  deleting $v$ and its incident edges, and in case $v$ was of degree~2,
  connecting its former neighbors $u$ and $w$ by a new edge $uw$.
  If $v$ was incident to at least one directed edge, say $\dbar{vw}$,
  then the new edge is directed as $\dbar{uw}$. Otherwise, the new edge is
  undirected $\ubar{uw}$.
\end{definition}

Note that if $v$ is suppressible of degree 2 and incident
to at least one directed edge, then it is either incident to
$\ubar{uv}$ and $\dbar{vw}$, or to $\dbar{uv}$ and $\dbar{vw}$
(up to relabeling $u$ and $v$) because $N$ is assumed complete.
In this case, defining the new edge $\dbar{uw}$ as directed ensures that
the resulting $\cL$-network is complete.

We are now ready to reduce cherries in $\cL$-networks.

\begin{definition}[semidirected cherry reduction]
  \label{def:semi-cherry-reduce-net}
  Let $(a,b)$ be a cherry in an $\cL$-network $N$.
  The \emph{reduction} of $(a,b)$ in $N$, denoted as $N^{(a,b)}$, is defined as follows.\\
  If $(a,b)$ is a tree cherry in $N$ and $p_a=p_b$ their common parent,
  $N^{(a,b)}$ is the result of deleting
  $a$ and its incident edge and then suppressing  $p_a$ if it is suppressible.\\
  If $(a,b)$ is a reticulate cherry in $N$ and
  if $p_a$ and $p_b$ are their neighbors,
  $N^{(a,b)}$ is the result of deleting $\dbar{p_b p_a}$,
  suppressing $p_b$ if it is suppressible, and then suppressing $p_a$.
\end{definition}

\begin{lemma}
  Let $(a,b)$ be a cherry in an $\cL$-network $N$ (complete and binary).
  Then, $N^{(a,b)}$ is a complete and binary $\cL$-network.
\end{lemma}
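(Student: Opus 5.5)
We will verify the required properties one by one for $N'=N^{(a,b)}$. They are: $N'$ is a semidirected graph; it is acyclic; its leaves are tree nodes and carry the restriction of $\varphi$; it is binary; and it is complete. Deleting edges and nodes cannot create a semidirected cycle. Suppressing a degree-2 node $v$ with neighbours $u,w$ replaces a path $u v w$ by an edge $uw$, and by Definition~\ref{def:suppress} the new edge is directed as $\dbar{uw}$ exactly when the old path contained a directed edge $\dbar{vw}$. So any semidirected cycle through the new edge lifts to one through $u v w$ in $N$, and acyclicity is preserved. Suppressing a degree-1 root only deletes a node. The leaf labelling is the restriction of $\varphi$, hence still injective. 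Leaves stay tree nodes, because in-degrees never increase for surviving nodes, except $w$, which keeps in-degree at most $1$ since its single parent $v$ is replaced by $u$.

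The core of the proof is a case analysis following Definition~\ref{def:semi-net-cherrytypes}. We treat the types T(r2), T(r3), T(d), T(u), R(r2), R(r3), R(d) and R(u) separately and track only the degrees and edge types of the few nodes touched: $p_a$, $p_b$, the neighbour $v$ of $p_a$ (reticulate case) and the third neighbour of $p_b$.

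Tree cherries. After $a$ is deleted, $p_a=p_b$ drops by one in degree.
\begin{itemize}
\item T(r2): it becomes a degree-1 root with a tree child edge to $b$, and is suppressed. The leaf $b$ becomes isolated, i.e.\ a degree-0 leaf and also a trivial root component, which is allowed.
\item T(r3): it becomes a degree-2 root, which is not suppressible and is binary.
\item T(d), T(u): it becomes degree 2 and is not a root component, so it is suppressed. The new edge to $b$ is directed, since $p_b b$ was directed.
\end{itemize}

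Reticulate cherries. Since $N$ is binary and $p_a$ is hybrid, $p_a$ has exactly the two parents $p_b$ and $v$ and the child $a$. After deleting $\dbar{p_bp_a}$, the node $p_a$ has degree 2, and suppressing it yields the edge $\dbar{va}$. The node $p_b$ loses one degree, giving the same subcases as for tree cherries, except that under (r2) the suppressed degree-1 root has child $b$. One must check that suppressing $p_b$ before $p_a$ is harmless; it is, because they are no longer adjacent.

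Completeness is the step I expect to be the main obstacle. Two things must be checked. First, the edges into leaves must be directed, which is clear from the suppression rule. Second, every nontrivial $\sim$-class of $N'$ must have in-degree 0 in $\cont(N')$. Undirected edges of $N'$ are either undirected edges of $N$, or new edges $\ubar{uw}$ created when suppressing $p_b$ in case (u) with both remaining incident edges undirected. Neither of these can happen in case (u), since the edge to $b$ is directed. Hence the undirected components of $N'$ are those of $N$, minus possibly the node $p_b$ in case (r3) or (u). The real work is to show that the in-degree of a class does not increase. The only directed edges whose head changes are $\dbar{va}$, pointing to a leaf, and the suppression of a degree-2 node $x$ with $\ubar{ux}$, $\dbar{xw}$, which gives a new edge $\dbar{uw}$ out of $[u]$. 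Neither adds an incoming edge to a nontrivial class.

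In case (u) we must also argue that removing $p_b$ from its class cannot split the class into pieces with incoming edges. This holds because $p_b$ was an unresolved part of a root component, whose other nodes keep only their outgoing directed edges, by Observation~\ref{obs:rc-separated}.

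Finally, a node is non-binary only if its degree changed. The only such nodes are $p_b$ (or $p_a$), which is handled above, and the endpoints of new edges, whose degrees are unchanged. So $N'$ is binary.
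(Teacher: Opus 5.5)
Your proposal is correct and takes the same route as the paper, whose proof is a two-sentence version of your argument: after the cherry edge is deleted, suppressing the nodes that lost an edge restores the degree conditions, and the direction rule for suppression keeps edges into leaves directed. Your case analysis over the eight cherry types, and your explicit checks of acyclicity and of the in-degree condition on $\sim$-classes, fill in what the paper leaves implicit; two slips are harmless. First, in the parallel-edge R(r3) case ($v=p_b$, as in $N_2$), $p_b$ and $p_a$ stay adjacent after deleting $\dbar{p_bp_a}$, but there $p_b$ is a degree-2 root and is not suppressed. Second, ``a node is non-binary only if its degree changed'' needs one more remark: the only change of role is that, in case (u), the third neighbour of $p_b$ may become a root, and it keeps degree~3, which roots are allowed to have.
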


\begin{proof}
  Suppressing the nodes that lost an edge ensures that the network remains binary,
  and complete.
  Further, the direction of the new edge(s) when suppressing $p_a$ and/or $p_b$
  ensures that edges incident to leaves in $N^{(a,b)}$ are directed toward leaves.
\end{proof}

If $(a,b)$ is a cherry in $N$ of type (r2), then $p_b$ becomes suppressible and $b$
becomes an isolated node after reducing $(a,b)$.
If $(a,b)$ is of type (r3), then
$p_b$ becomes a resolved root and is not suppressed.

\begin{figure*}
  \centering
  \includegraphics[scale=1,valign=m]{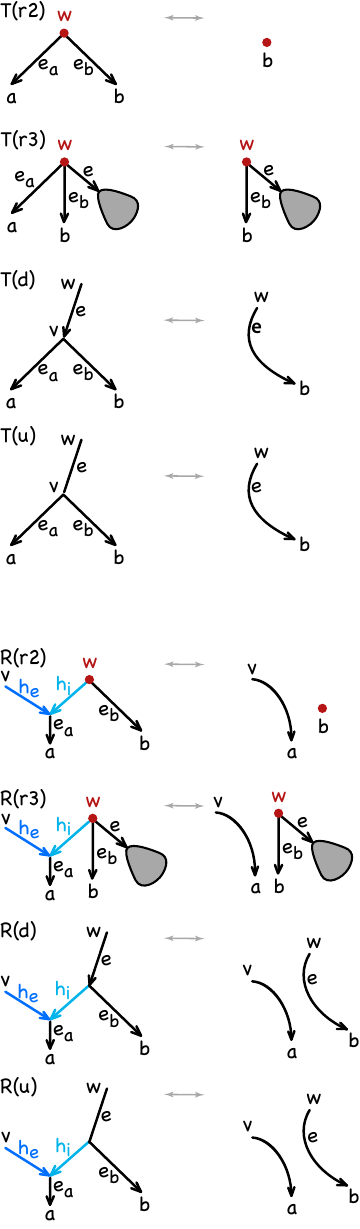}
  \hspace*{.5cm}
  {\small\begin{tabular}{@{}L|LL@{}}
  & \mbox{after cherry} & \mbox{after cherry}\\
  & \mbox{addition} & \mbox{reduction}\\
  \midrule
  \mbox{T(r2)} &&\\
  w   & \dab\rn & \db\rn\\
  e_a & \da\te  & - \\
  e_b & \db\te  & - \\
  \midrule
  \mbox{T(r3)} &&\\
  w   & \dab+m\rn        & \db+m\rn\\
  e_a & \da\te, \db+m\ie & - \\
  e_b & \db\te, \da+m\ie & \db\te \\
  e   &   m\te, \dab\ie  &   m\te \\
  \midrule
  \mbox{T(d)} &&\\
  e   & \dab\te, \text{¿}m\ie? & \db\te, \text{¿}m\ie?\\
  e_a & \da\te & - \\
  e_b & \db\te & - \\
  \midrule
  \mbox{T(u)} &&\\
  e   & \dab\te, m\te & \db\te, m\ie\\
  e_a & \da\te, \db+m\ie & - \\
  e_b & \db\te, \da+m\ie & - \\
  \\\midrule\midrule\\
  \mbox{R(r2)} &&\\
  w   & \dhab\rn       & \db\rn\\
  e_a & \da\te         & - \\
  e_b & \db\te         & - \\
  h_i & \dha\he        & - \\
  h_e & \dha\he, \text{¿}\widetilde{m}\ie? & \da\te, \text{¿}\widetilde{m}\ie?\\
  \midrule
  \mbox{R(r3)} &&\\
  w   & \dhab+m\rn        & \db+m\rn\\
  e_a & \da\te            & - \\
  e_b & \db\te, \dha+m\ie & \db\te  \\
  e   &   m\te, \dhab\ie  &   m\te  \\
  h_i & \dha\he,\db+m\ie  & - \\
  h_e & \dha\he, \text{¿}\widetilde{m}\ie? & \da\te, \text{¿}\widetilde{m}\ie?\\
  \midrule
  \mbox{R(d)} &&\\
  e   & \dhab\te, \text{¿}m\ie? & \db\te, \text{¿}m\ie?\\
  e_a & \da\te            & - \\
  e_b & \db\te            & - \\
  h_i & \dha\he           & - \\
  h_e & \dha\he, \text{¿}\widetilde{m}\ie? & \da\te, \text{¿}\widetilde{m}\ie?\\
  \midrule
  \mbox{R(u)} &&\\
  e   & \dhab\te, m\te    & \db\te, m\ie\\
  e_a & \da\te            & - \\
  e_b & \db\te, \dha+m\ie & - \\
  h_i & \dha\he, \db+m\ie & - \\
  h_e & \dha\he, \text{¿}\widetilde{m}\ie? & \da\te, \text{¿}\widetilde{m}\ie?\\
  \end{tabular}}
  \caption{
  Left: Reduction ($\rightarrow$) and addition ($\leftarrow$) of cherry
  $(a,b)$ of each type.
  In reticulate cherries, the hybrid internal ($h_i$) and external ($h_e$) edges
  are shown in blue.
  Right: $\mu$-entries, as defined later in section~\ref{sec:semi-edge-mu},
  before and after cherry reduction, where $m$ and $\widetilde{m}$ are
  $\mu$-vectors that depend on the rest of the network.
  $\text{¿}m\ie?$ means that the tagged $\mu$-vector may be absent,
  and brackets and parentheses are omitted for readability.
  }\label{fig:reductions-N}
\end{figure*}

Each type of cherry reduction can be reversed by a corresponding
\emph{cherry addition}, similarly to Definition~5 in \textcite{2021JanssenMurakami},
restricted to their addition types 1(a) and 2(a) to build binary networks.

\begin{definition}[semidirected tree cherry addition]
\label{def:semi-addtreecherry}
Let $N$ be an $\cL$-network, and $a,b\in\cL$ such that $b$ is a leaf in $N$
but $a$ is not. For C one of T(r2), T(r3), T(d) or T(u),
the \emph{tree addition} of $(a,b)$ of \emph{type} C in $N$, denoted as
${}^{(a,b)_\mathrm{C}}N$, is the network
obtained by adding to $N$ the taxon $a$ and applying the operations matching
case C listed below.
If the case conditions do not hold, we define ${}^{(a,b)_\mathrm{C}}N = N$.
Let $w$ be the parent of $b$ in $N$, if it exists. 
\begin{enumerate}[T(r2)]
  \item[T(r2)] If $b$ is an isolated leaf in $N$, add a node $p_b$ and directed
        edges $\dbar{p_b a}$ and $\dbar{p_b b}$.
  \item[T(r3)] If $b$ is not isolated and $w$ is a resolved root,
    add one directed edge $\dbar{wa}$.
  \item[T(d)] If $b$ is not isolated and $w$ is not a resolved root,
    add a new node $p_b$, delete $\dbar{wb}$ and add directed
    edges $\dbar{w p_b}$, $\dbar{p_b a}$ and $\dbar{p_b b}$.
  \item[T(u)] If $b$ is not isolated and $w$ is not a resolved root,
    add a new node $p_b$, delete $\dbar{wb}$, and then
    add $\ubar{w p_b}$, $\dbar{p_b a}$ and $\dbar{p_b b}$.
\end{enumerate}
\end{definition}

\begin{definition}[semidirected reticulate cherry addition]
\label{def:semi-addretcherry}
Let $N$ be an $\cL$-network, and $a,b\in\cL$ such that $a\neq b$ are
leaves in $N$. For C one of R(r2), R(r3), R(d) or R(u),
the \emph{reticulate addition} of $(a,b)$ of \emph{type} C in $N$,
denoted as ${}^{(a,b)_\mathrm{C}}N$, is the
network obtained by applying the operations matching case $C$ below
if its conditions are met.
If the conditions are not met, we define ${}^{(a,b)_\mathrm{C}}N = N$.
Let $w$ be the parent of $b$ in $N$, if it exists.
\begin{enumerate}[R(r2)]
  \item[R(r2)] If $b$ is an isolated leaf in $N$, add a node $p_b$ and directed
        edges $\dbar{p_b p_a}$ and $\dbar{p_b b}$.
  \item[R(r3)] If $b$ is not isolated and $w$ is a resolved root,
    add one directed edge $\dbar{w p_a}$.
  \item[R(d)] If $b$ is not isolated and $w$ is not a resolved root,
    add a new node $p_b$, delete $\dbar{wb}$ and add directed
    edges $\dbar{wp_b}$, $\dbar{p_b p_a}$ and $\dbar{p_b b}$.
  \item[R(u)] If $b$ is not isolated and $w$ is not a resolved root,
    add a new node $p_b$, delete $\dbar{wb}$, and then
    add $\ubar{wp_b}$, $\dbar{p_b p_a}$ and $\dbar{p_b b}$.
\end{enumerate}
\end{definition}
 
We shall omit the words ``tree'' and ``reticulate'', and simply say ``addition
of $(a,b)$ of type C''. Note the importance of specifying the type of
addition, because cases T(d) and T(u) (resp. R(d) and R(u)) have
identical conditions. The 8 types of additions are
illustrated in Fig.~\ref{fig:reductions-N}.
It is clear that each type of addition corresponds to
the reverse operation of the reduction of that type.
In other words:

\begin{proposition}\label{prop:reduce-add}
  Let $N$ be an $\cL$-network and $(a,b)$ a cherry of type \textnormal{C} of $N$.
  Then, ${}^{(a,b)_\mathrm{C}}(N^{(a,b)})=N$.
\end{proposition}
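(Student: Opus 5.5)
We prove Proposition~\ref{prop:reduce-add} by a case analysis over the eight cherry types of Definition~\ref{def:semi-net-cherrytypes}. In each case we describe $N^{(a,b)}$ locally around $b$, check that the conditions of the matching case of Definition~\ref{def:semi-addtreecherry} or~\ref{def:semi-addretcherry} hold there, and check that the added nodes and edges restore exactly the deleted ones. Isomorphism is understood up to renaming the newly created node $p_b$ (and $p_a$ in the reticulate case). Outside a bounded neighbourhood of $a$, $b$, $p_a$, $p_b$, the reduction changes nothing, so only this neighbourhood needs to be compared. Throughout, $N$ is complete and binary.

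\textbf{Tree cherries.} Let $p=p_a=p_b$.
\begin{itemize}
\item In type T(r2), $p$ is a resolved root of degree 2 adjacent only to $a$ and $b$. Deleting $a$ leaves $p$ of degree 1 with a tree child edge, so $p$ is suppressed and $b$ becomes isolated. Then $a$ is absent and $b$ is isolated, which is the condition of T(r2), and the addition recreates $p$ with $\dbar{pa}$ and $\dbar{pb}$.
\item In type T(r3), $p$ is an unresolved trivial root of degree 3. After deleting $a$ it has degree 2, is a root component and is not suppressed; it becomes a resolved root and is the parent $w$ of $b$. The addition T(r3) adds back $\dbar{wa}$.
\item In types T(d) and T(u), $p$ has a third neighbour $w$ via an edge $e$, directed $\dbar{wp}$ or undirected. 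Since $p$ is not a root component here, suppressing it after deleting $a$ creates an edge from $w$ to $b$. By Definition~\ref{def:suppress} this edge is directed $\dbar{wb}$, because $\dbar{pb}$ is directed.
\end{itemize}
For T(d) and T(u) we must check that $w$ is not a resolved root in $N^{(a,b)}$. A resolved root has degree $\le 2$ with only outgoing edges. In type (u), $w$ is joined to $p$ by an undirected edge, so $w$ belongs to a nontrivial root component and is not a resolved root. In type (d), $w$ has $\dbar{wp}$; if $w$ were a root of degree 2 in $N$, the cherry would be of type (r2) or (r3) rather than (d), since then $p_b=p$ would be a child of $w$, not a root. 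The degree of $w$ is unchanged by the reduction. The addition then deletes $\dbar{wb}$ and recreates $p_b$ with the correct orientation of the edge to $w$.

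\textbf{Reticulate cherries.} These proceed the same way, with $p_a$ a hybrid node with parents $p_b$ and $v$. Deleting $\dbar{p_bp_a}$ leaves $p_b$ of degree 2, or of degree 1 in case (r2), and $p_a$ of degree 2 with $\dbar{vp_a}$ and $\dbar{p_aa}$. Suppressing $p_a$ yields $\dbar{va}$, so $a$ is a non-isolated leaf of $N^{(a,b)}$, as required by Definition~\ref{def:semi-addretcherry}. The argument for $p_b$ and $w$ is identical to the tree case.

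The addition subdivides the parent edge of $a$ with a new hybrid node $p_a$ and joins $p_b$ (or $w$) to it. Definition~\ref{def:semi-addretcherry} leaves this subdivision implicit, and we read it that way. This restores $\dbar{vp_a}$, $\dbar{p_ap_a... }$ more precisely $\dbar{vp_a}$ and $\dbar{p_aa}$, together with $\dbar{p_bp_a}$.

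\textbf{Expected obstacles.} The main one is the degenerate configurations: parallel edges as in $N_2$, where $v=p_b$, and the case where $v$ or $w$ coincides with another node of the cherry. In these cases the order "suppress $p_b$, then $p_a$" may merge edges unexpectedly. We would handle them by checking explicitly that suppressing $p_b$ never removes $p_a$'s other parent edge. When $v=p_b$, $p_b$ has degree 2 after the deletion and is a root or non-root node, and its suppression creates an edge to $p_a$ that the subsequent suppression of $p_a$ correctly turns into $\dbar{\cdot\, a}$. The second potential obstacle is confirming that the type is preserved, namely that $w$ is not a resolved root in types (d) and (u). This is the argument given above.
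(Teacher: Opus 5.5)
Your case analysis fails at the claim that in types T(d) and R(d) the vertex $w$ cannot be a resolved root. Your justification conflates $w$ with $p_b$. Types (r2) and (r3) in Definition~\ref{def:semi-net-cherrytypes} ask whether $p_b$ is a root. If $w$ is a degree-2 root, then $p_b$ is its child and not a root, so the cherry is exactly of type (d).

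Here is a concrete counterexample. Let $N$ have a degree-2 root $w$ with children $p$ and a leaf $c$, where $p$ is the parent of leaves $a$ and $b$. This is a complete, binary, even orchard $\cL$-network, and $(a,b)$ is a tree cherry of type T(d). $N^{(a,b)}$ is the resolved root $w$ with children $b$ and $c$. The parent of $b$ is therefore a resolved root, so the T(d) condition of Definition~\ref{def:semi-addtreecherry} fails. By the stated convention, ${}^{(a,b)_{\mathrm{T(d)}}}(N^{(a,b)})=N^{(a,b)}\neq N$.

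R(d) fails the same way. Take $w$ a degree-2 root with children $p_b$ and $v$, where $p_b$ has children $b$ and $p_a$, $v$ has children $p_a$ and $c$, and $p_a$ has child $a$. After reducing $(a,b)$, the parent of $b$ is the resolved root $w$, so the R(d) addition does nothing.

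A sharper argument cannot close this gap. With the addition definitions as written, the proposition is false in these cases. The paper only calls it clear and gives no proof, so it does not handle this either; as a side effect, Algorithm~\ref{alg:reconstruction} cannot rebuild the three-leaf tree above. The fix belongs in the definitions: drop the requirement ``$w$ is not a resolved root'' from the T(d) and R(d) additions. That requirement is unnecessary, because the type C is part of the input. It is also harmful: a T(d) reduction under a resolved root gives the same network as a T(r3) reduction at a degree-3 root with children $a,b,c$, and similarly for R(d) versus R(r3).

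With that amendment, the rest of your local case analysis goes through, including the degenerate configurations. When $v=p_b$, all edges at $p_b$ point outward, so $p_b$ is necessarily an unresolved root and only R(r3) arises; it cannot be a non-root node as you suggest. The case $v=w$ causes no problem.

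For type (u), the correct reason $w$ is not a resolved root in $N^{(a,b)}$ is that it still has degree $3$. Your reason, that $w$ lies in a nontrivial root component of $N$, does not carry over: if that component was $\{w,p_b\}$, then $w$ becomes a trivial (unresolved) root component after the reduction.
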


A sequence of taxon pairs $S=s_1 \ldots s_k$ is a \emph{reduction sequence} for
an $\cL$-network $N$ if $s_1$ is a cherry in $N$ and if, for
each $i\in\{2,\ldots,k\}$, $s_i$ is a cherry
in $(\ldots(N^{s_1})^{s_2}\ldots)^{s_{i-1}}$. In this
case, $(\ldots(N^{s_1})^{s_2}\ldots)^{s_{k}}$ is called the reduction of $N$
with respect to $S$, and is denoted as $N^S$.
If $S$ is such that $N^S$ is a trivial forest, then we say that $S$ is
\emph{complete} and we call $N$ an \emph{orchard} $\cL$-network.

If an orchard network has multiple cherries, any of them
can be chosen first to reduce the network to a trivial forest.
See the Supplementary Material 
for a give the proof of this result,
which is an adaptation of the results in
\cite{2021JanssenMurakami} for single-rooted networks.

\begin{proposition}\label{prop:order-semi}
  Suppose $N$ is an orchard $\cL$-network and let $(a,b)$ be a cherry of $N$.
  Then, $N^{(a,b)}$ is an orchard $\cL$-network.
\end{proposition}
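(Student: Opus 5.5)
The plan is to adapt the argument of \cite{2021JanssenMurakami}: show that reducing two distinct cherries ``commutes up to a cherry'', and then induct on the length of a complete reduction sequence. Fix a complete reduction sequence $S=s_1s_2\ldots s_k$ of $N$, and induct on $k$ (or on the number of edges of $N$) to prove the stronger statement: for every cherry $(a,b)$ of $N$, the network $N^{(a,b)}$ admits a complete reduction sequence. If $k=0$, $N$ is a trivial forest with no cherries, so there is nothing to prove. If $(a,b)=s_1$, then $s_2\ldots s_k$ works.

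Otherwise let $s_1=(c,d)\neq(a,b)$. The key step is a local lemma. If $(a,b)$ and $(c,d)$ are distinct cherries of $N$, then one of the following holds.
\begin{itemize}
\item[(i)] $(a,b)$ is still a cherry of $N^{(c,d)}$, $(c,d)$ is still a cherry of $N^{(a,b)}$, and $(N^{(c,d)})^{(a,b)}=(N^{(a,b)})^{(c,d)}$.
\item[(ii)] The two reductions interfere, but they can be reconciled by one or two extra cherries, as in Janssen--Murakami's case analysis.
\end{itemize}
Case (ii) covers $\{a,b\}=\{c,d\}$, for instance when $(a,b)$ and $(b,a)$ are both tree cherries, or when $(a,b)$ is reticulate with $a$ or $b$ involved in $(c,d)$. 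There I would check directly that the two reduced networks are isomorphic, or that they become equal after reducing a suitable cherry in each, e.g.\ $(b,a)$ versus $(a,b)$ for a tree cherry, or the pair from the other leaf.

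With the lemma in hand, the induction runs as follows. In case (i), $N^{(c,d)}$ is orchard with the shorter complete sequence $s_2\ldots s_k$, and $(a,b)$ is a cherry of it. By the induction hypothesis $(N^{(c,d)})^{(a,b)}$ is orchard. Hence $(N^{(a,b)})^{(c,d)}$ is orchard, and prepending $(c,d)$ gives a complete sequence for $N^{(a,b)}$. In case (ii), the lemma expresses $N^{(a,b)}$, after one or two further cherry reductions, as a network reachable from $N^{(c,d)}$ by cherry reductions. Repeated use of the induction hypothesis on $N^{(c,d)}$ then shows that this network is orchard, and we prepend the extra cherries.

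I expect the main obstacle to be proving the local lemma in the semidirected setting with all eight types. The type of a cherry (r2/r3/d/u) may change after the other reduction: for example, suppressing $p_d$ may make $p_b$ a resolved root, or turn an undirected incident edge into a directed one. One also has to verify that the suppressions in Definition~\ref{def:suppress} commute, and that any parallel edges (as in $N_2$ of Fig.~\ref{fig:specialretcherry}) are treated consistently. I would organize the check by whether the parents $p_a,p_b,p_c,p_d$ are pairwise disjoint (then the reductions are clearly independent, giving case (i)) or share nodes or edges. Because the network is binary, only a few overlap configurations can occur: common parent, a shared reticulation, or $p_b=p_d$ being adjacent. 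For each of these I would compare both orders of reduction by hand, using Proposition~\ref{prop:reduce-add} to identify the resulting networks when needed.
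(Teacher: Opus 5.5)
Your argument is correct in outline, but it is organized differently from the paper's proof.

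The paper transcribes Janssen--Murakami through four lemmas:
Lemma~\ref{lem:order-6} (a cherry destroyed by reducing $(c,d)$ involves $c$);
Lemma~\ref{lem:order-7} (swapping a tree cherry $(a,b)$ for $(b,a)$ by relabelling);
Lemma~\ref{lem:order-8} (if $s_2$ is already a cherry of $N$, it commutes with $s_1$, so $N^{s_2}$ is orchard);
and the extremal Lemma~\ref{lem:order-9}, which produces a complete sequence containing $(a,b)$ or $(b,a)$ with $(a,b)$ surviving until then.
The paper then commutes $(a,b)$ to the front.

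You replace Lemma~\ref{lem:order-9} and its index bookkeeping with a single induction and a local diamond lemma. That lemma reconciles the two branches directly when $s_1$ destroys $(a,b)$. This is shorter and self-contained, but all the work sits in that lemma, which you only sketch, and your description of case (ii) needs sharpening.

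By Lemma~\ref{lem:order-6}, both cherries survive the other's reduction (and then commute) unless $c\in\{a,b\}$ or $a\in\{c,d\}$. Two consequences for your case split:
\begin{itemize}
\item Case (ii) includes tree configurations at an unresolved root, such as $(a,b)$ together with $(c,a)$, which your list omits.
\item Reticulate $(a,b),(a,d)$ with $p_b\neq p_d$ actually fall under (i), not (ii).
\end{itemize}

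In the genuinely interfering cases, one extra step always suffices:
\begin{itemize}
\item $N^{(a,b)}=N^{(a,d)}$ if $c=a$ and either $p_a$ is a tree node or $p_b=p_d$;
\item $(N^{(a,b)})^{(b,d)}=(N^{(b,d)})^{(a,d)}$ if $c=b$ and $d\neq a$;
\item $(N^{(a,b)})^{(c,b)}=(N^{(c,a)})^{(a,b)}$ if $d=a$ and $c\neq b$.
\end{itemize}

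The case $(c,d)=(b,a)$ behaves differently: no further reductions can make the two branches equal. For instance, if $p_a=p_b$ is a resolved root, one branch keeps an isolated $b$ and the other an isolated $a$. There you need that $N^{(a,b)}$ is $N^{(b,a)}$ with $a$ renamed $b$, and that orchardness is invariant under relabelling. This is exactly the content of Lemma~\ref{lem:order-7}.

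Since one extra reduction always suffices, the induction hypothesis is applied only once, so induction on $k$ works. Had two extra steps been needed, induction on the number of edges would be the safe choice. The hypothesis gives no a priori bound on the length of a complete sequence for $(N^{(c,d)})^{y}$.
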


Next, we show that if an orchard $\cL$-network $N$ has parallel edges,
then they must be adjacent to some unresolved root component.

\begin{proposition}\label{prop:parallel}
  Let $N$ be an orchard $\cL$-network, and suppose that $h_i$ and $h_e$ are
  parallel edges in $N$. Then,
  \begin{enumerate}
    \item If $S=s_1\ldots s_k$ is a complete reduction sequence for $N$, then
      there exists $i\in[k]$ such that $N^{s_1\ldots s_{i-1}}$ contains
       $N_2$, the network in Fig.~\ref{fig:specialretcherry}
      (up to leaf labels) as one of its components,
      and this network contains $h_i$ and $h_e$.
    \item $h_i,h_e\in\adme(T)$ for some unresolved root component $T$ in $N$.
  \end{enumerate}
\end{proposition}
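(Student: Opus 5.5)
The plan is to track the parallel pair $h_i,h_e$ along the complete reduction sequence $S$ until the step that destroys it. Throughout, an edge of the current network descends from an edge of $N$ in the obvious way: the edge created when a node is suppressed inherits the identity of one of the two merged edges.

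First I would show that each cherry reduction $s_j$ either leaves the parallel pair as a pair of parallel edges with the same endpoints (possibly relabelled through suppressions), or removes one of them. By Definition~\ref{def:semi-cherry-reduce-net}, a reduction deletes one leaf edge or one internal edge $\dbar{p_bp_a}$, and then suppresses at most two nodes. Suppressing a node of degree~2 merges its two incident edges. Suppose neither $h_i$ nor $h_e$ is deleted and neither is merged away. Then both endpoints keep degree~3 (or stay a root), and the pair stays parallel. If a suppressed node $v$ were an endpoint of one of the parallel edges, it would be an endpoint of both. Since $v$ has degree~2 after the deletion, its only remaining edges would be the two parallel ones, and suppressing $v$ would create a loop. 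But $N^{s_1\ldots s_j}$ is an SDAG, so this cannot happen.

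Because $N^S$ is a trivial forest with no edges, there is a first index $i$ at which $s_i=(a,b)$ deletes or merges one of the pair. Let $M=N^{s_1\ldots s_{i-1}}$, in which $h_i,h_e$ are still parallel between nodes $x,y$. In a binary SDAG, two parallel edges must be directed the same way, say $x\to y$. Otherwise they would form a semidirected cycle, and undirected parallel edges cannot occur inside a root component, which is a tree. So $y$ is hybrid with both parent edges equal to $h_i,h_e$. The reduction touches the pair, so it involves either the edge deleted or the nodes $p_a,p_b$ suppressed.

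The key case analysis is as follows. A tree cherry only deletes the leaf edge $p_aa$ and may suppress $p_a$. For that to merge a parallel edge, $p_a$ would have to be $x$ or $y$, which again forces a loop; so $s_i$ is a reticulate cherry. The deleted or merged edge must then be $\dbar{p_bp_a}$ or one incident to $p_a$ or $p_b$. This forces $y=p_a$ and $x=p_b$, and $h_i,h_e$ are the internal and external edges, with external node $v=p_b$. Then $p_b$ is adjacent to $p_a$ twice and to $b$, so $p_b$ has degree~3 with all edges outgoing, making it an unresolved trivial root. Its component is exactly $p_b$, $p_a$, $a$ and $b$, which is $N_2$ up to labels. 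This gives part~1.

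For part~2, I would run the reductions $s_{i-1},\ldots,s_1$ backwards via cherry additions (Proposition~\ref{prop:reduce-add}), starting from the fact that in $M$ both edges are outgoing from the unresolved root $p_b$, hence admissible for its root component. The main obstacle is showing that additions preserve two properties: the top endpoint of the pair remains in some root component, and that component remains unresolved. Going back, $p_b$ keeps three incident edges. An addition of type T(r3) or R(r3) at a resolved root $w$ cannot involve $p_b$, since $p_b$ is unresolved. Additions of types (d) and (u) only subdivide a leaf edge $\dbar{wb}$, and none of the pair is a leaf edge. The delicate point is subdivision by suppression inverses, where an ancestor edge of $h_i$ or $h_e$ in $N$ has an extra node inserted. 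Here I would check that the newly inserted node on a (u)-type edge joins the root component, keeping the pair's endpoint in $T$, while a (d)-type insertion never touches the pair by the first paragraph. Either way $T$ stays non-trivial or of degree~3, hence unresolved, and $h_i,h_e\in\adme(T)$.
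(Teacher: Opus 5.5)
Your proposal is correct. Part~1 follows the paper's argument with the details filled in. The paper looks at the step where the hybrid head of the pair is suppressed and notes that this step must be a reticulate cherry whose internal and external edges are $h_i,h_e$. You take the first step that deletes or merges one of the pair and pin down $p_a=y$, $p_b=x$ by degree counting.

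Part~2 runs in the opposite direction from the paper. The paper goes forward. It first rules out the tail $u$ ever being a resolved root, then follows the third edge $e$ at $u$ through the suppressions. An edge directed into $u$ stays directed into $u$, and deleting it would leave $u$ of degree~2. Either way this contradicts $u$ being the parent of a leaf in $\widetilde N$. You instead start from $M$, where $x=p_b$ is an unresolved trivial root, and undo the reductions by additions. Both rely on the same invariant: the third edge at the tail never points into it. Your direction gets ``$x$ has degree~3 in $N$'' for free, since additions never lower the degree of an existing node. The cost is leaning on Proposition~\ref{prop:reduce-add} to identify each earlier network with an addition to the later one.

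One correction to your sketch: the ``delicate point'' you name does not arise. Your own first paragraph shows that $h_i,h_e$ pass unchanged from $N$ to $M$. What actually needs checking is the third edge $g$ at $x$. Additions only subdivide leaf edges (the parent edge of $b$, and for reticulate additions also that of $a$), and these are directed away from their tail. So if $g$ is subdivided it becomes $\dbar{xp}$ or $\ubar{xp}$. Additions never reorient an edge or subdivide an undirected one, so the class $[x]$ only grows. Completeness of every intermediate network then makes $[x]$ a root component. It is unresolved because it is either non-trivial or the single node $x$ of degree~3.
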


\begin{proof}
  Let $u$ and $v$ be the tail and the head of
  the two parallel edges, labeled $h_i$ and $h_e$
  in an arbitrary but fixed way.
  Note that $u$ is a tree node whereas $v$ is a hybrid node.

  If we reduce $N$ with a complete reduction
  sequence $S=s_1\ldots s_k$, then  $v$ must be suppressed
  eventually during the  process. Let $i\in[k]$ be such that $v$ is a
  node in $\widetilde{N}=N^{s_1\ldots s_{i-1}}$ but it is not
  in $\widetilde{N}^{s_i}$. Then $v$ is a hybrid node in $\widetilde{N}$,
  and it must be involved in the reduction of the cherry $s_i$.
  Therefore, $s_i$ must be a reticulate cherry, involving either
  $h_i$ or $h_e$ (interchangeably), and both $u$ and $v$ must be incident to leaves.
  Thus, $\widetilde{N}$ contains $N_2$ depicted in
  Fig.~\ref{fig:specialretcherry}, as the connected component of $\widetilde{N}$
  containing $h_i$ and $h_e$, proving part 1.

  To prove part 2, we first note that $u$ cannot be a resolved
  root in $N^{s_1\ldots s_{j}}$ for any $j=0,\ldots k$.
  If it were, then $h_i$ and $h_e$ would be its unique incident edges,
  and neither $u$ nor $v$ could be involved in any cherry reduction,
  so the component containing $u,v$ 
  could never be reduced to an isolated node --- a contradiction.

  Let $e$ be the edge incident to $u$ other than $h_i$ and $h_e$,
  and $w$ the node incident to $e$ other than $u$.
  Since $u$ is adjacent to a leaf in $\widetilde{N}$ but not in $N$,
  $w$ must have been suppressed by $s_1\ldots s_{i-1}$.
  When $w$ is suppressed,
  $e$ is deleted altogether if $w$ is a root of degree 1.
  This would cause $u$ to have degree 2, which is not possible,
  as seen earlier. Therefore, when $w$ is suppressed,
  $e$ and the other edge incident to $w$ are replaced by a new edge $e'$.
  If $e$ were directed towards $u$, then $e'$ would also be
  directed towards $u$ (by Def.~\ref{def:suppress}).
  Repeating this argument at each reduction step,
  we get that $u$ would not be adjacent to a leaf in $\widetilde{N}$.
  Thus, $e$ is either undirected or $e$ is directed away from $u$.
  Either way, $u$ is part of a root component $T$,
  and $h_i,h_e\in\adme(T)$, proving part 2.
\end{proof}

\section{Edge-based $\mu$-representation} \label{sec:semi-edge-mu}

We adapt the definitions of \textcite{2024Cardona-extendedmu} to $\cL$-networks.
Let $N$ be an $\cL$-network with leaf set $\cL_0\subseteq\cL=[n]$.
To define an edge-based $\mu$-representation generally,
we do not assume that $N$ is binary here.
For each edge $e$ in $N$ (directed or not) and $i\in[n]$, we define
\[
  \mu_i(e,v)=m(v,i;e)
\]
where $u,v$ are the nodes incident to $e$.
That is, $\mu_i(e,v)$ is the number of paths from $v$ to the leaf $i$ that
do not contain $e$.
By symmetry, $\mu_i(e,u)$ is the number of paths from $u$ to $i$ that
do not contain $e$. Also, as in \cite{2024Cardona-extendedmu}, we extend this
definition to the number of paths from $v$ to hybrid nodes that
do not contain $e$. That is, we define (with some abuse of notation, using $0$
to represent any hybrid node):
\[
  m(v,0;e)=\sum_{h \text{ hybrid}}m(v,h;e),
\]
and, as before,
\[
  \mu_0(e,v)=m(v,0;e).
\]
Then we define the \emph{simple $\mu$-vector}
\[
  \mu(e,v)=(\mu_0(e,v),\ldots,\mu_n(e,v))
\]
with $\mu(e,u)$ defined similarly by symmetry.
A \emph{tagged $\mu$-vector} is a tuple $(\mu,t)$ where $\mu$ is a
simple $\mu$-vector and $t\in\{\te,\he,\rn,\ie\}$ is a \emph{tag}.
The tags $\te,\he$ and $\rn$ were already introduced in
\cite{2025MaxfieldXuAne-mu-semidirected},
and we use the same meaning here, to associate $\mu$-vectors
with tree edges, hybrid edges, or root nodes respectively.
Below we introduce a new tag $\ie$, which will enable us to identify all
the types of cherries.
A \emph{$\mu$-entry} is a multiset of 1 or 2 tagged $\mu$-vectors.

For a subset $A=\{x_1,\ldots,x_k\}\subseteq \{0,1,\ldots,n\}$ we define
the $\mu$-vector $\delta_{x_1,\ldots,x_k}$ as the $(n+1)$-vector
whose $i^\mathrm{th}$ component (indexed from 0) is 1 for all $i\in \{0\}\cup[k]$
and 0 otherwise.

Following \textcite{2025MaxfieldXuAne-mu-semidirected}, we associate each
root component to a $\mu$-entry. For this, we first need a result
for non-isolated root components.

\begin{lemma}
  Let $N$ be an $\cL$-network (not necessarily binary)
  and $T$ a root component of $N$ such
  that $\adme(T)\neq\emptyset$. Then, the vector $\mu(e,v)+\mu(e,u)$
  does not depend on the choice of the edge $e\in\adme(T)$,
  where $u,v$ are the nodes incident to $e$.
\end{lemma}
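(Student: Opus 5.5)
The plan is to show that for every edge $e\in\adme(T)$, with endpoints $u,v$, the vector $\mu(e,u)+\mu(e,v)$ equals a quantity intrinsic to $T$. Concretely, fix a node $r\in T$. For each target $x\in\{0\}\cup[n]$ (with $x=0$ standing for the sum over hybrid nodes), I will prove
\[
\mu_x(e,u)+\mu_x(e,v)=\sum_{w\in T}\ \sum_{f=\dbar{wz}\,\text{outgoing from }T} m(z,x),
\]
that is, the total number of paths starting at some node of $T$ and leaving $T$ through one of its outgoing directed edges, counted by their exit point. The right-hand side does not mention $e$, so the statement follows.

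Step 1 describes the paths. Since $T$ is an undirected tree and every edge adjacent to $T$ is outgoing (Observation~\ref{obs:rc-separated}), a semidirected path starting at a node of $T$ and ending at a leaf or hybrid node behaves as follows. It first follows a path in the tree $T$, which is unique between its endpoints and may be empty. It then exits through one outgoing edge $\dbar{wz}$ with $w\in T$. It never re-enters $T$, because re-entry would require a directed edge into $T$, and none exists. (If the target is a leaf or hybrid inside $T$ only trivially, that case cannot occur: nodes of $T$ are not leaves with incoming edges, and they are tree nodes.) Hence paths from a node $y\in T$ to $x$ correspond bijectively to pairs consisting of an exit edge $\dbar{wz}$ and a path from $z$ to $x$. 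This gives $m(y,x)=\sum_{\dbar{wz}}m(z,x)$, where the sum runs over all outgoing edges of $T$, independently of $y$. One thing to check is the degenerate path of length zero: $T$ contains no leaves or hybrids, since its nodes have $\deg_i=0$ and $T$ is not an isolated leaf because $\adme(T)\neq\emptyset$.

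Step 2 handles an edge $e=\ubar{uv}$ inside $T$. Removing $e$ splits $T$ into $T_u\ni u$ and $T_v\ni v$. A path from $u$ avoiding $e$ stays in $T_u$ before exiting, so $\mu_x(e,u)$ counts exactly the paths through exit edges attached to $T_u$, and symmetrically for $v$. The sum therefore covers all exit edges of $T$, which gives the intrinsic quantity.

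Step 3 handles an edge $e=\dbar{uz}$ with $u\in T$. Then $\mu_x(e,u)$ counts paths from $u$ exiting through any edge other than $e$, and $\mu_x(e,z)=m(z,x;e)=m(z,x)$, since no path from $z$ can use $e$ (that would require returning to $u\in T$). The sum again counts all exits; equivalently, this is Lemma~\ref{lem:bij-paths} applied at $u$, giving $\mu_x(e,u)+\mu_x(e,z)=m(u,x)$, and the same lemma applied in Step 2 gives $m(u,x)$ too. So in every case the sum equals $m(r,x)$ for any $r\in T$, by Step 1.

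The main obstacle is Step 1, namely the claim that paths starting in $T$ never re-enter $T$ and that $m(y,x)$ is the same for all $y\in T$. This is where completeness and acyclicity (no semidirected cycles) are used, together with the fact that $T$ is a tree, so that tree-internal routes are unique and do not create multiplicities.
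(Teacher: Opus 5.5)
Your proof is correct and follows essentially the paper's route: Lemma~\ref{lem:bij-paths} reduces $\mu(e,u)+\mu(e,v)$ to $(m(u,x))_{x}$ for the endpoint $u\in T$, and then one shows that $m(u,x)$ takes the same value for every $u\in T$. Your Step~1, which decomposes each path by its unique exit edge from $T$ and uses that no path re-enters $T$, justifies this last step more carefully than the paper does: the paper identifies paths by ``appending the undirected path from $u_2$ to $u_1$'', and taken literally this can revisit nodes of $T$ when the path from $u_1$ heads back toward $u_2$.
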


\begin{proof}
  Let $e_1=u_1v_1\in\adme(T)$ and $e_2=u_2v_2\in\adme(T)$
  be different edges. Without loss of generality,
  we can assume that $u_1$ and $u_2$ are nodes in $T$.
  Then, $e_i$ is either undirected or directed from $u_i$ to $v_i$,
  for $i\in\{1,2\}$.
  By Lemma~\ref{lem:bij-paths} we get
  $
    \mu(e_1,v_1)+\mu(e_1,u_1) = (m(v_1,i;e_1)+m(u_1,i;e_1))_{i=0}^n
    = (m(u_1,i))_{i=0}^n
  $
  and similarly $\mu(e_2,v_2)+\mu(e_2,u_2) = (m(u_2,i))_{i=0}^n$.
  Now, as $u_1\sim u_2$, we can identify the paths in $N$ starting in $u_1$
  with the paths starting with $u_2$, by appending the (unique and undirected) path
  that joins $u_1$ with $u_2$.
  Therefore, $m(u_1,i)=m(u_2,i)$ for all $i\in \{0\}\cup[n]$,
  which completes the proof.
\end{proof}

We can now associate each root component to a $\mu$-entry.

\begin{definition}
  Let $N$ be an $\cL$-network 
  and $T$ a root component of $N$. We define the
  \emph{$\mu$-entry of $T$}, denoted as $\mu(T)$, as follows. If $T$
  consists on an isolated node labeled by $a$,
  then $\mu(T)=\{(\delta_a,\rn)\}$. Otherwise,
  $\mu(T)=\{(\mu(e,v)+\mu(e,u),\rn)\}$ for some 
  $e\in\adme(T)$, where $u,v$ are the nodes incident to $e$.
\end{definition}

Next, we associate each edge of $N$ to a $\mu$-entry.

\begin{definition}
  Let $e$ an edge of $N$, and let $u,v$ be the nodes incident to $e$. The
  \emph{$\mu$-entry of $e$}, denoted as $\mu(e)$, is defined as follows.
  \begin{enumerate}[(d)]
    \item[(d)] If $e=\dbar{uv}$ and $e\notin\adme(T)$ for any unresolved root
          component $T$, then $\mu(e)=\{(\mu(e,v),t)\}$ with tag $t=\he$ if $v$
          is hybrid and $t=\te$ otherwise.
    \item[(i)] If $e=\dbar{uv}$ and $e\in\adme(T)$ for some unresolved root
          component $T$, then $\mu(e)=\{(\mu(e,v),t),(\mu(T)-\mu(e,u),\ie)\}$
          with tag $t=\he$ if $v$ is hybrid and $t=\te$ otherwise.
    \item[(u)] If $e=\ubar{uv}$, then $\mu(e)=\{(\mu(e,v),\te),(\mu(e,u),\te)\}$.
  \end{enumerate}
\end{definition}

Fig.~\ref{fig:ex-network} illustrates these definitions
on a network $N$ (left), listing the $\mu$-entry associated with
each root component and each edge (right).

For a directed admissible edge $e$ in case (i), our definition differs from that
in \cite{2025MaxfieldXuAne-mu-semidirected}, which gives all directed edges a
unidirectional $\mu$-set. Our definition indicates that $e$ is directed and
admissible, thanks to the new tag $\ie$ introduced.
See $\mu(e_1)$ in Fig.~\ref{fig:ex-network} for example.
This tag $\ie$ indicates that the $\mu$-vector should be `ignored'
when reconstructing the semidirected network from its $\mu$-representation,
in the sense that the edge must be directed according to the other $\mu$-vector.
Alternatively, $\ie$ is for edges `incident' to an unresolved root component.
The $\mu$-vector tagged $\ie$ corresponds to the edge going into $T$
after subdividing $uv$ into 2 edges to root $T$ along $uv$.
Our definition has the advantage of identifying the admissible edges,
as exactly those with $|\mu(e)|=2$.

\medskip
Finally, the \emph{edge-based $\mu$-representation} of $N$, denoted as $\mu(N)$,
is the multiset of $\mu$-entries from all edges and root components in $N$.

We say that a (multi)set of $\mu$-entries $\boldmu$ is a \emph{trivial forest}
on $\cL_0\subseteq\cL$ if $\boldmu=\{(\delta_i,\rn) | i \in\cL_0\}$.

Next are basic results linking features of $N$ and $\mu(N)$.

\begin{proposition} \label{prop:trivial-forest-mu}
  A network $N$ is a trivial forest on $\cL_0$ if, and only if,
  $\mu(N)$ is a trivial forest on $\cL_0$.
\end{proposition}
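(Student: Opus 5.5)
The forward direction follows from the definitions. If $N$ is a trivial forest on $\cL_0$, then $N$ has no edges. Its root components are exactly the isolated nodes, each labeled by some $a\in\cL_0$, and each contributes the entry $\{(\delta_a,\rn)\}$. Since the labeling is a bijection onto $\cL_0$, we get $\mu(N)=\{(\delta_i,\rn)\mid i\in\cL_0\}$.

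For the converse, suppose $\mu(N)$ is a trivial forest on $\cL_0$. Every entry of $\mu(N)$ is then a single vector tagged $\rn$.

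\textbf{Step 1: $N$ has no edges.} Each edge $e$ contributes an entry, and by the definition of $\mu(e)$ that entry always contains a vector tagged $\te$ or $\he$. Every entry of $\mu(N)$ is tagged $\rn$, so $N$ has no edges.

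\textbf{Step 2: $N$ consists of isolated leaves.} With no edges, every node of $N$ is isolated. An isolated node has $\deg_i=\deg_u=0$, so it is a root, and it has $\deg_o=\deg_u=0$, so it is a leaf. Each node therefore forms a trivial root component labeled by some $a$, with entry $\{(\delta_a,\rn)\}$.

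\textbf{Step 3: the leaf set is $\cL_0$.} The multiset $\mu(N)$ is $\{(\delta_{\varphi(x)},\rn)\mid x\in V\}$, and this must equal $\{(\delta_i,\rn)\mid i\in\cL_0\}$. The map $a\mapsto\delta_a$ is injective on $[n]$, and $\varphi$ is injective, so the labels of $N$ are exactly the elements of $\cL_0$, each appearing once. Hence $N$ is a trivial forest on $\cL_0$.

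There is one point to check, at Step 2. The definition of $\mu(T)$ for an isolated node presupposes that the node is a labeled leaf. Our definition of a binary network allows roots of degree 0, so one might worry about an unlabeled isolated root. Such a node, however, has $\deg_o=\deg_u=0$, so it satisfies the definition of a leaf and is labeled through $\varphi$. No case is left over, and the argument reduces to comparing tags and multisets.
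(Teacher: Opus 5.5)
Your proposal is correct and follows the same route as the paper's proof: the forward direction comes straight from the definition of $\mu(T)$ for isolated nodes, and the converse observes that an all-$\rn$ representation forces $N$ to have no edges, so its nodes are isolated labeled leaves whose labels are exactly $\cL_0$. Your version just makes explicit two points the paper leaves implicit: every edge entry contains a vector tagged $\te$ or $\he$, and an isolated node is by definition a labeled leaf.
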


\begin{proof}
  If $N$ is a trivial forest on $\cL_0$, then $N$ has no edges and trivially
  its $\mu$-representation is $\mu(N)=\{(\delta_i,\rn)|i\in\cL_0\}$.
  Conversely, if $\mu(N)=\{(\delta_i,\rn)|i\in\cL_0\}$, implies that $N$ has no
  edges and its taxa is $\cL_0$, and thus it must be the trivial forest on $\cL_0$.
\end{proof}

\begin{lemma} \label{lem:mu-leaves}
  Let $N$ be an $\cL$-network (complete and binary)
  with leaf set $\cL_0\subseteq \cL$.
  For $a\in\cL$, $a$ is a leaf in $N$ if and only if $\da$ has
  multiplicity 1 in $\mu(N)$.
\end{lemma}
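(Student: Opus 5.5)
The plan is to count occurrences of $\da$ among all tagged $\mu$-vectors in $\mu(N)$, regardless of tag, and show that this count is $1$ if $a$ is a leaf and $0$ otherwise. The converse direction is immediate. If $a$ is not a leaf of $N$, then no path ends at $a$, so the $a$-th coordinate of every $\mu$-vector (including root entries) is $0$, and $\da$ does not occur. For the forward direction, I first exhibit one occurrence. If $a$ is isolated, its root component contributes $(\da,\rn)$. If $a$ is not isolated, its parent edge $e=\dbar{p_a a}$ is directed because $N$ is complete, and $\mu(e,a)=\da$: the only path from $a$ is the trivial path to $a$ itself, and $a$ is not hybrid.

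The main work is to show that no other $\mu$-vector equals $\da$. I will use the coordinate sum $\sum_{i\ge 1}\mu_i$ together with $\mu_0$. I need one auxiliary fact. Let $w$ be a node and $f$ an edge incident to $w$ that is either undirected or directed away from $w$, and call such an edge \emph{usable} from $w$. Then at least one path starts at $w$ with $f$ and ends at a leaf. To see this, extend the path greedily. At each non-leaf node reached, there is another usable edge: binary non-leaf, non-root nodes have degree $3$, at most two of their edges are incoming, and the edge just traversed is not reused. Acyclicity and finiteness force the path to terminate, and since leaves are tree nodes it terminates at a leaf.

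Next I go through the kinds of vectors in $\mu(N)$, using the binary and complete assumptions.

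\emph{Head vector $\mu(e,v)$ of a directed edge $e=\dbar{uv}$ with $v\neq a$.} If $v$ is hybrid, the trivial path gives $\mu_0\ge 1$. Otherwise $v$ is a non-leaf tree node with an incoming edge, so it has degree $3$. By completeness $[v]$ is a singleton: a class with more than one node has in-degree $0$, so $v$ has no undirected edges. Hence the two edges of $v$ other than $e$ are both outgoing, and the auxiliary fact gives coordinate sum at least $2$.

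\emph{Both vectors of an undirected edge, and the $\ie$-vector $m(u,\cdot\,;e)$ for $u$ in an unresolved $T$.} A node of a nontrivial root component has $\deg_u\ge1$, so it is not a root, and binarity gives it degree $3$. An unresolved trivial root has degree $3$ by definition. In either case the node has two usable edges besides $e$, so the sum is again at least $2$.

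\emph{Root entries.} The entry of an isolated node $b\neq a$ is $\delta_b\neq\da$. The entry of a non-isolated root component is $m(u,\cdot)$ for some node $u$ of the component, and $u$ has at least two usable edges (a resolved root has degree $2$), so the sum is at least $2$.

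Finally, the leaf edge of any leaf $b\neq a$ gives $\delta_b$. If $a$ is isolated, its only occurrence is its root entry, since $a$ has no incident edges. Together these cases show that the multiplicity of $\da$ is exactly $1$.

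The step I expect to be delicate is the degree bookkeeping: I must make sure that no degree-$2$ non-root node exists, and that each node whose vector is taken has two usable edges besides $e$. The reason it does not break is that completeness forbids undirected edges at nodes with incoming edges, and the paper's convention classifies nodes inside nontrivial root components as non-roots, so they have degree $3$.
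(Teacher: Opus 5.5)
Your proof is correct and follows the same outline as the paper's: the parent edge of $a$ (or the root entry of an isolated $a$) supplies the occurrence of $\delta_a$, binarity and completeness rule out a second occurrence, and the converse follows because every $a$-coordinate vanishes. Your coordinate-sum bound via path extension makes rigorous the paper's brief claim that a second occurrence would force either a second edge at $a$ or a degree-2 configuration at $p$. One small point: in the path-extension step, continuing from a node entered through an undirected edge needs completeness (such a node has no incoming edges), not just ``at most two incoming edges''; you do state this fact in your last paragraph.
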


\begin{proof}
  Suppose $a$ is a leaf in $N$. If $a$ is isolated then it is a root,
  $\mu(\{a\})=\{(\da,\rn)\}$, and no other $\mu$-entry contains $\da$.
  Otherwise, we have $\da\in\mu(e_a)$, where $e_a=p a$ is the edge incident to $a$.
  Moreover, if there was another $\mu$-entry $\mathfrak{u}_1$ in $\mu(N)$
  such that $\da\in\mathfrak{u}_1$, then there
  would be another edge incident
  to $a$, contradicting that leaves have in-degree 1; or $p$ would be
  incident to a single edge other than $pa$, directed towards $p$,
  contradicting that $N$ is binary.

  Conversely, if $\da$ has multiplicity 1 in $\mu(N)$, then there is at least
  one path to $a$ in $N$, hence $a$ must be a leaf in $N$.
\end{proof}

Moreover, we can use Algorithm~1 in \cite{2025MaxfieldXuAne-mu-semidirected}
to compute $\mu(N)$ in $\mathcal{O}(n|E|)$ time
(recall that $E$ is $N$'s edge set).
This algorithm does not compute
the paths to reticulations nor the tagged $\mu$-vectors with tag $\ie$, but
can be easily adapted to include these modifications without
altering the computational cost.

\subsection{Cherries in edge-based $\mu$-representations}
\label{subsec:semi-cherries-mu}

In this section we adapt the definition of tree and reticulate cherries to the
edge-based $\mu$-representation of (binary) $\cL$-networks.
Here, $\boldmu$ represents a
(multi)set of $\mu$-entries on the supertaxon set $\cL=[n]$.

With some abuse of notation, for
every $\mu$-entry $\{(m_1,t_1),\ldots,(m_k,t_k)\}$ in $\boldmu$
($k=1$ or $2$),
we denote $m_i\in\boldmu$ and $(m_i,t_i)\in\boldmu$.
We say that $m$ (resp. $(m,t)$) has multiplicity $i$ in $\boldmu$ if there
are $i$ entries in $\boldmu$ (counting also multiplicities) containing the
simple $\mu$-vector $m$ (resp. the tagged $\mu$-vector $(m,t)$ for any tag $t$).
Also, given an entry $\mathfrak{u}=\{\mu_1,\mu_2\}$
such that $\mu_1$ is of multiplicity 1 in $\boldmu$,
we say that $\mathfrak{u}$ is \emph{the entry} of $\mu_1$, and we define
\emph{inverse} $\mu_1^{-1}$ of $\mu_1$ as $\mu_2$.
Also, \emph{removing} $\mu_1$ (resp. $\mu_1^{-1}$) is the operation of
changing $\{\mu_1,\mu_2\}$ to $\{\mu_2\}$ (resp. $\{\mu_1\}$).
Note that this is not equivalent to removing the entry $\mathfrak{u}$.
This definition extends to simple $\mu$-vectors with multiplicity 1 in $\boldmu$
in a natural way, ignoring tags.

In $\boldmu=\mu(N)$ from Fig.~\ref{fig:ex-network} for example,
$m=001000$ has multiplicity 1 from the entry
$\mathfrak{u} = \mu(e_2) = \{(001000,\te), (110100,\ie)\}$,
hence $m^{-1}=110100$.
Removing $m$ or $(m,\te)$ from $\boldmu$ means
replacing $\mathfrak{u}$ by $\{(110100,\ie)\}$.

\begin{definition}[tree cherries in $\mu$-representations]
  \label{def:mu-semi-tree}
  $\boldmu$ has a \emph{tree cherry} $(a,b)$ if $\dab$ has multiplicity 1 in $\boldmu$.
  If so, let $t$ be the tag of $\dab$ in $\boldmu$.
  We say that $(a,b)$ if of type T(r2) if $t=\rn$; T(r3) if $t=\ie$;
  T(d) if $t=\te$ and either $\dab^{-1}\not\in\boldmu$ or $(\dab^{-1},\ie)\in\boldmu$;
  and T(u) otherwise.
\end{definition}

\noindent
For example, $(1,2)$ is a T(r3) tree cherry of $\mu(N)$
in Fig.~\ref{fig:ex-network} (right), with $\delta_{1,2}$ found in
the $\ie$-tagged $\mu$-entry from $\mu(e_3)$.

For reticulate cherries, we first define a basic property that
characterizes them when $\boldmu$ is the $\mu$-representation of some network.

\begin{definition} \label{def:mu-semi-ret}
  $\boldmu$ has a \emph{reticulate cherry} $(a,b)$
  if $\dha$ has multiplicity 2 in $\boldmu$, and
  either $\dhab$ has multiplicity 1 in $\boldmu$
  or $\dhab$ has multiplicity 2, each time tagged $\ie$.
  If so, let $t$ be the tag of $\dhab$ in $\boldmu$
  (well-defined as $\ie$ if $\dhab$ has multiplicity 2).
  We say that $(a,b)$ if of type R(r2) if $t=\rn$; R(r3) if $t=\ie$;
  R(d) if $t=\te$ and either $\dhab^{-1}\not\in\boldmu$ or
  $(\dhab^{-1},\ie)\in\boldmu$; and R(u) otherwise.
\end{definition}

\noindent
Note that $\dhab^{-1}$ is well-defined if $t\neq\ie$ as
$\dhab$ is of multiplicity 1 in that case.

For example, $(3,2)$ is an R(r3) reticulate cherry of $\mu(N)$
in Fig.~\ref{fig:ex-network} (right),
with $\delta_{0,3}$ occurring in $\mu(e_3)$ and $\mu(e_4)$,
and $\delta_{0,2,3}$ found in the $\ie$-tagged $\mu$-entry from $\mu(e_1)$.

We say that $(a,b)$ is a \emph{cherry} of $\boldmu$ if it is either a
tree cherry or a reticulate cherry of $\boldmu$. The following results gives the
correspondence of cherries in $N$ and in $\mu(N)$.

\begin{proposition} \label{prop:cherries-mu-N}
  $(a,b)$ is a tree (resp. reticulate) cherry
  of type C in $\mu(N)$
  if and only if $(a,b)$ is a tree (resp. reticulate) cherry of type C in $N$.
\end{proposition}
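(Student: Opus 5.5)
The plan is to prove both directions by characterizing exactly which edges or root components can carry the vectors $\dab$, $\dha$ and $\dhab$ in $\mu(N)$. These characterizations are proved as auxiliary lemmas, in the spirit of Lemma~\ref{lem:mu-leaves}. The basic tool is Lemma~\ref{lem:bij-paths} together with the structure of complete binary networks. A simple $\mu$-vector $\mu(e,v)$ records the paths leaving $v$ away from $e$. Its value is $\delta_A$ exactly when the part of $N$ reachable from $v$ avoiding $e$ has very few paths. For the tag $\ie$, the vector $\mu(T)-\mu(e,u)$ records the paths from the other side of a subdivided $e$ into $T$.

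\textbf{Tree cherries.} First show that if $\dab$ occurs in $\mu(N)$ as a vector $\mu(e,v)$, then $v$ reaches exactly the leaves $a$ and $b$, by one path each, and reaches no hybrid node. Binarity and completeness then force $v=p_a=p_b$. The one exception is $e$ being an edge incident to the resolved root $\{p_b\}$ of degree 2; that case gives the $\rn$ entry $\dab$. Also check that the $\ie$ vector $\mu(T)-\mu(e,u)$ equals $\dab$ only when $e$ is the third edge at an unresolved trivial root $p_b$ whose other two edges go to $a$ and $b$. Hence $\dab$ has multiplicity 1 iff $(a,b)$ is a tree cherry. Uniqueness follows as in Lemma~\ref{lem:mu-leaves}: any other entry equal to $\dab$ would need a second node with the same descendant set and no hybrid below it, which is impossible. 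The type is then read off the table in Fig.~\ref{fig:reductions-N}. The tag $\rn$ gives T(r2), and the tag $\ie$ gives T(r3). Tag $\te$ with the inverse absent or tagged $\ie$ means the edge above $p_b$ is directed, which is T(d). Tag $\te$ with an inverse tagged $\te$ means it is undirected, which is T(u). Conversely, each network type produces exactly the entries listed in that table.

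\textbf{Reticulate cherries.} Next show that $\dha$ arises only from the two hybrid edges entering $p_a$, where $p_a$ is a hybrid node whose only child is the leaf $a$. This gives multiplicity 2. I must also rule out an $\ie$ vector or a root vector equal to $\dha$; for a root this would force a root whose only descendant is one hybrid above $a$, which contradicts binarity. For $\dhab$ there are two sources. One is the vector at a tree node $p_b$ with children $b$ and $p_a$; this carries tag $\rn$, $\te$ or $\ie$ according to the type of $p_b$. The other is the parallel-edge situation of $N_2$, where $\dhab$ appears twice, both times tagged $\ie$. This is why Definition~\ref{def:mu-semi-ret} allows multiplicity 2. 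Proposition~\ref{prop:parallel} is not needed here; it only motivates this case. Conversely, suppose $\dha$ has multiplicity 2 and $\dhab$ is present. Locating $\dhab$ at some $\mu(e,v)$ shows that $v$ has children $b$ and a hybrid above $a$, so $(a,b)$ is a reticulate cherry. The typing again comes from the table, exactly as for tree cherries.

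\textbf{Main obstacle.} I expect the hardest step to be excluding spurious occurrences of $\delta_{a,b}$ or $\delta_{0,a,b}$ as $\ie$-tagged vectors $\mu(T)-\mu(e,u)$ for larger root components. A second difficulty is the degenerate parallel-edge case, where $p_b$ coincides with the external parent. To handle these, I would first express $\mu(T)-\mu(e,u)$ via Lemma~\ref{lem:bij-paths} as $\sum_{e'}\mu(e',\cdot)$ over the other admissible edges at the subdivision point. I would then argue that this sum is that small only if $T$ is a single unresolved node with the other two outgoing edges leading directly to $a$ and $b$, or to $b$ and $p_a$. In the parallel case both of those edges lead to $p_a$.
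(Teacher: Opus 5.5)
Your route is the paper's. You locate every occurrence of $\dab$, $\dha$ and $\dhab$ in $\mu(N)$ by counting paths from the relevant endpoint (the head, the tail in $T$, or the root component). Binarity and completeness then force the local shape, and you read the type off the tag and its partner. Your analysis of $\dab$ and of $\dha$ is sound.

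\textbf{The gap.} Your claim that $\dhab$ has only two sources is false, so the converse step ``locating $\dhab$ at some $\mu(e,v)$ shows that $v$ has children $b$ and a hybrid above $a$'' fails as written. Two configurations are missing.
(1) Let $h$ be a hybrid node whose child is the common parent of a tree cherry $\{a,b\}$. Then each of the two edges $e$ entering $h$ has $\mu(e,h)=\dhab$, tagged $\he$, because the trivial path at $h$ contributes to the $0$-th coordinate.
(2) Since $\dhab=\delta_{0,b,a}$ is symmetric in $a$ and $b$, a reticulate cherry $(b,a)$ also produces $\dhab$, with tag $\rn$, $\te$ or $\ie$. Here the relevant node is a tree node with children $a$ and a hybrid whose child is $b$. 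This includes the mirrored $N_2$ situation, with two $\ie$ occurrences.
In neither configuration does the relevant node have children $b$ and a hybrid above $a$.

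\textbf{The repair.} Use the ingredient the paper's proof states explicitly: ``From $\dha\in\boldmu$ we get that the parent of $a$ is a reticulation.'' Apply your $\dha$ lemma first, so that multiplicity $2$ of $\dha$ forces the parent of $a$ to be a hybrid node with child $a$. Both missing configurations have a tree node as the parent of $a$, so this excludes them. Only then does locating $\dhab$ give a reticulate cherry $(a,b)$ whose type is read from the tag. The same observation is what justifies your multiplicity count for $\dhab$ in the forward direction. There, $(a,b)$ is a reticulate cherry of $N$, so the parent of $a$ is hybrid and the parent of $b$ is a tree node, which rules out (1) and (2).

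\textbf{A smaller slip.} In your last paragraph, the two edges at $p_b$ other than $e$ still go to $b$ and to $p_a$ in the parallel case. It is $e$ itself that is parallel to the edge into $p_a$. If both other edges went to $p_a$, the $\ie$ vector would be $2\dha$, not $\dhab$.
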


\begin{proof}
  This proof is illustrated in Figure~\ref{fig:reductions-N}.
  Let $\boldmu = \mu(N)$.
  First, suppose that $\dab\in\mathfrak{u}$ for some
  $\mu$-entry $\mathfrak{u}$ of $\boldmu$.
  We consider the following cases, which correspond to the cherry type
  of $(a,b)$ in $\boldmu$ if we assumed that $\dab$ had multiplicity 1 in $\boldmu$.
  \begin{enumerate}[T(r2)]
    \item[T(r2)] $\mathfrak{u} = \{(\dab,\rn)\}$.
      Then, there exists a root component $T$ such that, from $T$,
      there is only one path to $a$, another to $b$, and no paths to
      any other leaf or reticulation. Therefore, $T$ must be only
      adjacent to the nodes $a$ and $b$, and thus it must be a resolved root
      component $\{w\}$ such that $w$ is the parent of both $a$ and $b$,
      so $(a,b)$ is a tree cherry in $N$ of type T(r2).

    \item[T(r3)] $(\dab,\ie) \in \mathfrak{u}$.
      Let $e$ be an edge such that $\mathfrak{u} = \mu(e)$
      and let $w$ be the tail of $e$, which must be of degree 3
      (from $N$ being binary, and $\dab$ being tagged $\ie$) 
      Then, from $w$ and without passing
      through $e$, there is only one path to $a$, one to $b$, and no path
      to any other leaf or reticulation. Therefore, $w$ is
      adjacent to both $a$ and $b$, because $w$ is of degree 3
      and $N$ is binary. 
      Therefore $w$ is a root (unresolved) and
      $(a,b)$ is a tree cherry of type T(r3).

    \item[T(d)]  $\mathfrak{u} = \{(\dab,\te)\}$ or $\mathfrak{u} = \{(\dab,\te), (\dab^{-1},\ie)\}$
      for some $\dab^{-1}$. 
      Similarly, consider 
      an edge $e$ such that $\mathfrak{u} = \mu(e)$. Then, $e$ is directed,
      and let $w$ be the head of $e$.
      Again, looking from $w$ and without passing through $e$,
      we conclude that $(a,b)$ is a tree cherry of type T(d).

    \item[T(u)] Otherwise, we have that $\mathfrak{u} = \{(\dab,\te), (\dab^{-1},\te)\}$.
    Indeed, $\dab$ cannot be tagged by $\he$ because any hybrid edge
    leads to at least 1 reticulation. Then, this case is
    analogous to T(d), but now $e$ is undirected and thus $(a,b)$ is a
    tree cherry of type T(u).
  \end{enumerate}
  In particular, if $(a,b)$ is a tree cherry in $\boldmu$,
  then it is a cherry in $N$ of the same type.

  Conversely, assume that $(a,b)$ is a tree cherry in $N$. Let C be its type
  and $p$ be the parent of both $a$ and $b$. If $p$ is a root of degree 2, then
  the only edges incident to $p$ are $\dbar{pa}$ and $\dbar{pb}$,
  hence $\mu(\{p\})=\{(\dab,\rn)\}$. Notice that there cannot be
  another $\mu$-entry containing $\dab$, because the cherry is isolated from the
  rest of the network.
  If C is not T(r2), let $e$ be the edge incident to $p$ other than $pa$
  and $pb$. Then, from $p$ and avoiding $e$, we have only one path to $a$, one
  to $b$, and no path to any other leaf or reticulation.
  Therefore, $(\dab,t)\in\mu(e)$ for some tag $t$. If C is T(r3), then $p$ is an
  unresolved root, and thus $t=\ie$. If C is T(d), then $e$ is directed
  towards $p$, so $t=\te$ and, if $\dab^{-1}$ exists, its tag must be $\ie$.

  If $\dab$ had multiplicity greater than 1 in $\boldmu$, then there must exist
  another edge $e'\neq e$ such that $\dab\in\mu(e')$. The single paths
  from $e'$ to $a$ and $b$ must pass through $p$, so they must also pass
  through the tree edge $e$. 
  Therefore, $e'$ and $e$ must be connected with a path
  consisting on nodes of degree two, contradicting that $N$ is binary.

  \medskip

  For reticulate cherries, we proceed analogously. Suppose
  that $\dhab\in\fu$ for some $\mu$-entry $\fu\in\boldmu$, and $\dha\in\boldmu$.
  From $\dha\in\boldmu$ we get that the parent of $a$ is a reticulation.
  As for tree cherries, but considering 
  the cases corresponding to the cherry type if
  we assumed that $\dha$ and $\dhab$ satisfied the properties in Def.~\ref{def:mu-semi-ret}
  and replacing $\dab$ by $\dhab$, we conclude that if $(a,b)$ is a reticulate
  cherry in $\boldmu$, then it is a reticulate cherry in $N$ of the same type.

  Conversely, assume that $(a,b)$ is a reticulate cherry in $N$, of some type C.
  Below, we show that $(a,b)$ is a cherry of type C in $\boldmu$.
  Let $p_a$ and $p_b$ be the parents of $a$ and $b$, respectively, and
  $h_i$ and $h_e$ the internal and external edges of $(a,b)$. As
  they are hybrid edges incident to $p_a$, then $\dha\in\mu(h_i)$
  and $\dha\in\mu(h_e)$, so $\dha$ has multiplicity at least 2 in $\boldmu$.
  Then $\dha$ has multiplicity exactly 2, because otherwise there
  would be another edge incident to $p_a$ other than $h_i,h_e$ and
  $p_aa$, contradicting that $N$ is binary.
  If C is R(r2), $p_b$ is a root of degree $2$. Then
  the only edges incident to $p_b$ are $\dbar{p_bp_a}$ and $\dbar{p_bb}$,
  so $\mu(\{p\})=\{(\dhab,\rn)\}$. Since $a$ can only be reached from $p_a$
  or $p_b$ and $b$ can only be reached from $p_b$, and $N$ is binary,
  $\dhab$ has multiplicity 1 and $\dha$ has multiplicity 2 in $\boldmu$.
  If C is not R(r2), let $e$ be
  the edge incident to $p_b$ other than $p_bp_a$ and $p_bb$.
  Then $(\dhab,t)\in\mu(e)$ for some tag $t$.
  What remains to be shown is that $\dhab$ has the appropriate
  multiplicity and tag(s) in $\boldmu$.

  If C is R(d) then $e$ is directed towards $p_b$
  and if C is R(u) then $e$ is undirected. In both cases $t=\te$, and
  $e$ is the unique edge such that $\dhab\in\mu(e)$, using that $N$ is binary.
  Then $\dhab$ has multiplicity 1, and
  if C is R(d) and $\dhab^{-1}$ exists, its tag must be $\ie$.
  Finally,
  if C is R(r3), then $p_b$ is an unresolved root,
  $e$ is directed away from $p_b$, $t=\ie$, and we have 2 cases:
  either $e$ has child $p_a$ like $h_i$, or $e$ has a different child.
  In the first case $e=h_e$ and $h_i$ are (exchangeable) parallel edges,
  both with the same $\mu$-entry $\fu=\{(\dha,\he),(\dhab,\ie)\}$.
  Then the subgraph induced by $p_b$, $p_a$, $a$ and $b$
  forms a connected component of $N$ (isomorphic to $N_2$
  in Fig.~\ref{fig:specialretcherry} up to leaf labels)
  and no other edge has $\dhab$ in its $\mu$-entry.
  Thus $\dhab$ has multiplicity 2 in $\boldmu$, each time tagged $\ie$.
  In the second case, $h_e$ is not parallel to $h_i$.
  Then similarly to the R(r2) case, $e$ is the unique edge
  such that $\dhab\in\mu(e)$, so $\dhab$ has multiplicity 1 in $\boldmu$.

  Therefore, $(a,b)$ is a reticulate cherry of type C in $\boldmu$.
\end{proof}

For example, $N$ and $\mu(N)$ in Fig.~\ref{fig:ex-network} have five cherries:
$(1,2)$ of type T(r3), $(2,1)$ of type T(r3),
$(3,1)$ of type R(r3), $(3,2)$ of type R(r3), and $(4,5)$ of type R(u).

\smallskip

As a corollary of the proof above,
we can characterize the reticulate cherries in $\mu(N)$
that correspond to reticulate cherries in $N$ whose
internal and external hybrid edges are exchangeable parallel edges,
and this lets us define $\dhab^{-1}$
when $\dhab$ is of multiplicity 2.

\begin{coro}\label{coro:mu-parallel}
  Let $(a,b)$ be a reticulate cherry in $N$, and let $h_i$ and $h_e$ be its
  internal and the external hybrid edges.
  The edges $h_i$ and $h_e$ are parallel if, and only if,
  $\dhab$ has multiplicity 2 in $\mu(N)$.
  In this case, $(a,b)$ is of type R(r3) and the
  two $\mu$-entries containing $\dhab$ are
  both equal to $\{(\dha,\he),(\dhab,\ie)\}$,
  and so we define $\dhab^{-1} = \dha$.
\end{coro}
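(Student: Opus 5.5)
The corollary follows by re-reading the converse direction of the proof of Proposition~\ref{prop:cherries-mu-N}, where multiplicity 2 of $\dhab$ arose in only one subcase. Let $p_a,p_b$ be the parents of $a,b$. Let $e$ be the third edge at $p_b$ when $p_b$ has degree 3, and recall $h_i=\dbar{p_bp_a}$.

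\emph{Forward direction.} Assume $h_i$ and $h_e$ are parallel. Then $p_b$ is incident to $h_i$, $h_e$ and $p_bb$, so by binarity these are all its edges, and none of them enters $p_b$. Hence $p_b$ is a root of degree 3, that is, an unresolved trivial root component. So $(a,b)$ is of type R(r3), with $e=h_e$. The component $\{p_b,p_a,a,b\}$ is then isomorphic to $N_2$. I compute the entries of $h_i$ and $h_e$ directly from the definition, case (i). The head vector is $\mu(h_i,p_a)=\dha$: avoiding $h_i$, $p_a$ reaches $a$ once and itself, which is a hybrid, once; it cannot reach $b$. The root vector is $\mu(T)=\mu(h_i,p_a)+\mu(h_i,p_b)$. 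Avoiding $h_i$, $p_b$ reaches $b$ once, and through $h_e$ it reaches $p_a$ and $a$ once each, so $\mu(h_i,p_b)=\dhab$. The $\ie$-vector is therefore $\mu(T)-\mu(h_i,p_b)=\dhab$. This gives $\mu(h_i)=\{(\dha,\he),(\dhab,\ie)\}$, and by symmetry $\mu(h_e)$ is the same. No other entry contains $\dhab$, because the component has only the edges $h_i$, $h_e$, $p_ba$ and $p_bb$, plus the root entry. The root entry has vector $2\delta_0+2\delta_a+\delta_b$, which is not $\dhab$. The edge $p_ba$ has entry containing $\da$, and the edge $p_bb$ has entry containing $\db$ and $\mu(T)-\db$. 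Neither equals $\dhab$. So $\dhab$ has multiplicity exactly 2. This matches Figure~\ref{fig:specialretcherry}.

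\emph{Converse.} Assume $h_i$ and $h_e$ are not parallel. I go through the types as in the proof of Proposition~\ref{prop:cherries-mu-N}. For type R(r2), the only entry containing $\dhab$ is the root entry. For types R(d) and R(u), $e$ is the unique edge whose entry contains $\dhab$. For type R(r3) with $e\neq h_e$, the same uniqueness argument applies. In every case $\dhab$ has multiplicity 1, which establishes the equivalence.

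\emph{Main obstacle.} The only delicate point is the uniqueness claim in the non-parallel case. Suppose another edge $e'$ had $\dhab$ in its entry. The unique paths from the relevant endpoint of $e'$ to $b$, to $a$, and to the hybrid $p_a$ would all have to pass through $p_b$, and through $e$ unless $e'=e$. This would create a chain of degree-2 nodes, contradicting binarity, exactly as in the tree-cherry uniqueness argument. I would also check the case where $\dhab$ appears as the $\ie$-vector $\mu(T)-\mu(e',u)$ of some other admissible edge $e'$ of the root component. Such an $e'$ would have to be adjacent to $p_b$, and the only candidates are $h_i$, $p_bb$ and $e$. For $h_i$, the $\ie$-vector is $\mu(T)-\mu(h_i,p_b)=\delta_0+\da+\mu(h_e\text{ side})$, which contains more than $\dhab$ unless $h_e$ is parallel to $h_i$. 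For $p_bb$, the $\ie$-vector has $b$-component $0$, so it is not $\dhab$. Once this is verified, the definition $\dhab^{-1}=\dha$ is consistent, since both entries containing $\dhab$ coincide.
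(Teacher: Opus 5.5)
Your proposal is correct and is essentially the paper's own argument: the corollary is read off the proof of Proposition~\ref{prop:cherries-mu-N}, where the R(r3) subcase with $e=h_e$ forces the component to be $N_2$ with two identical entries $\{(\dha,\he),(\dhab,\ie)\}$, while every other situation (R(r2), R(d), R(u), and R(r3) with $e\neq h_e$) gives $\dhab$ multiplicity 1. One computational slip needs fixing: with your own values $\mu(T)=\dha+\dhab$ and $\mu(h_i,p_b)=\dhab$, the expression $\mu(T)-\mu(h_i,p_b)$ equals $\dha$, not $\dhab$. The definition's $\mu(T)-\mu(e,u)$ must be read as $\mu(T)-\mu(e,v)=\mu(e,u)$ (otherwise the $\ie$-vector would always equal the head vector, contradicting Fig.~\ref{fig:ex-network}), and under that reading the $\ie$-vector of $h_i$ is $\mu(h_i,p_b)=\dhab$, as you want; likewise, in your uniqueness check the $\ie$-vector of $h_i$ in the non-parallel case is $\db+m$, with $m$ the vector seen through $e$ (not $\delta_0+\da+\cdots$), and it equals $\dhab$ only if the head of $e$ is $p_a$, i.e.\ $e=h_e$.
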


For a reticulate cherry $(a,b)$ in $\boldmu$,
if $\boldmu=\mu(N)$ comes from a network $N$ then we will seek to match
each $\mu$-entry containing $\dha$ to one of the two hybrid edges
into the parent of $a$, that is, either to the internal or the external edge
of the reticulate cherry in $N$.

\begin{definition}
  Let $(a,b)$ be a reticulate cherry in $\boldmu$.
  We define the \emph{internal $\mu$-entry} of $(a,b)$ with respect to $\boldmu$ as
  \begin{itemize}
    \item $\{(\dha,\he)\}$ if $\dhab$ has multiplicity 1 in $\boldmu$ and
          either $\dhab^{-1}\not\in\boldmu$ or $(\dhab^{-1},\ie)\in\boldmu$,
    \item $\{(\dha,\he),(\dhab+\dhab^{-1}-\dha,\ie)\}$ otherwise.
  \end{itemize}
\end{definition}

\begin{proposition}\label{prop:distinguishable-ret}
  Suppose $(a,b)$ is a reticulate cherry of $N$.
  Then, the $\mu$-entry of the internal edge of $(a,b)$
  is equal to the internal $\mu$-entry of $(a,b)$ with respect to $\mu(N)$.
\end{proposition}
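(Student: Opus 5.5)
We argue by cases on the type C of the cherry $(a,b)$ in $N$. By Proposition~\ref{prop:cherries-mu-N} this is also its type in $\mu(N)$, and in each case we compute $\mu(h_i)$ directly from the definition of $\mu$-entries. Let $p_a,p_b$ be the parents of $a,b$, let $h_i=\dbar{p_bp_a}$, and let $e$ be the third edge at $p_b$ when it exists. The first component is common to all cases. From $p_a$, avoiding $h_i$, the only reachable nodes are $p_a$ itself (a hybrid) and $a$, since the other edge at $p_a$ is the incoming $h_e$. So $\mu(h_i,p_a)=\dha$, and because $p_a$ is hybrid the tag is $\he$. The remaining question is whether $\mu(h_i)$ has a second, $\ie$-tagged vector. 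By definition this happens exactly when $h_i\in\adme(T)$ for an unresolved root component $T$, which, given that $h_i$ is directed out of $p_b$, happens exactly when $p_b$ lies in an unresolved root component.

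\textbf{Case $p_b$ not in an unresolved root component.} This covers types R(r2), R(d) and R(u), since in types (d) and (u) $p_b$ has an incoming or undirected edge and is not an unresolved trivial root. Then $\mu(h_i)=\{(\dha,\he)\}$. We must check that these are exactly the cherries falling in the first bullet of the definition of the internal $\mu$-entry, and we read this off the proof of Proposition~\ref{prop:cherries-mu-N}.
\begin{itemize}
\item R(r2): $\dhab$ is tagged $\rn$, has multiplicity 1, and sits in a singleton entry, so $\dhab^{-1}\notin\boldmu$.
\item R(d): $\dhab$ has multiplicity 1, and its inverse is absent or tagged $\ie$. This is precisely the first bullet.
\item R(u): $\dhab$ lies in $\mu(e)$ for an undirected $e$, so its inverse exists and is tagged $\te$. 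This does \emph{not} satisfy the first bullet, and it is a genuine mismatch.
\end{itemize}
So for R(u) we must recheck the definition. Under case (u), $\mu(e)$ has both vectors tagged $\te$. The second bullet would then assign $h_i$ an $\ie$ entry, but $h_i$ is not admissible in type R(u). The main obstacle is reconciling this. I expect the resolution is that in type R(u) the edge $e=\ubar{wp_b}$ is undirected, so $p_b$ \emph{is} in a root component, and that component is non-trivial and hence unresolved. Then $h_i\in\adme(T)$ after all and $\mu(h_i)$ does carry an $\ie$ vector. Fig.~\ref{fig:reductions-N} confirms this, listing $h_i$ in R(u) as $\dha\he,\db+m\ie$. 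So the case split should be R(r2), R(d) versus R(r3), R(u), and the first group matches the first bullet exactly.

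\textbf{Case $p_b$ in an unresolved root component $T$.} This covers types R(r3) and R(u). Here $\mu(h_i)=\{(\dha,\he),(\mu(T)-\mu(h_i,p_b),\ie)\}$. Since $\mu(h_i,p_a)+\mu(h_i,p_b)=\mu(T)$, the $\ie$ vector equals $\mu(h_i,p_a)=\dha$ in total, i.e. $\mu(T)-\mu(h_i,p_b)$, and we must show it equals $\dhab+\dhab^{-1}-\dha$.

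For R(u), the other vector in $\mu(e)$ is $\dhab^{-1}=\mu(e,w)$ and $\mu(e,p_b)=\dhab$. Since $e\in\adme(T)$, Lemma~\ref{lem:bij-paths} and the root-component lemma give $\mu(T)=\dhab+\dhab^{-1}$, hence $\mu(T)-\mu(h_i,p_b)=\dhab+\dhab^{-1}-\dha$.

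For R(r3) with $h_e$ not parallel to $h_i$, $\mu(e)=\{(\mu(e,v),t),(\mu(T)-\mu(e,p_b),\ie)\}$ with $\mu(e,p_b)=\dhab$. So $\dhab^{-1}=\mu(e,v)$ and $\mu(T)=\dhab+\dhab^{-1}$, and the same identity follows.

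In the parallel case, Corollary~\ref{coro:mu-parallel} gives $\dhab^{-1}=\dha$, so the target entry is $\{(\dha,\he),(\dhab,\ie)\}$. By that same corollary this equals $\mu(h_i)$.
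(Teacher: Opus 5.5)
Your strategy is the paper's: split by type, note that $\mu(h_i,p_a)=\dha$ with tag $\he$, observe that an $\ie$-tagged vector appears exactly when $p_b$ lies in an unresolved root component (types R(r3) and R(u)), and in those cases read $\mu(T)=\dhab+\dhab^{-1}$ off the admissible edge $e$. The R(r2)/R(d) half and the parallel subcase are fine. The write-up should still state the correct split from the start, without the retracted claim about R(u). The justification is that in type R(u) the class $[p_b]$ has more than one node. By completeness it then has in-degree $0$ in $N/{\sim}$, so it is a non-trivial, hence unresolved, root component.

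The genuine error is the final step for R(u) and non-parallel R(r3). You write the $\ie$-component of $\mu(h_i)$ as $\mu(T)-\mu(h_i,p_b)$ and correctly observe that this equals $\mu(h_i,p_a)=\dha$. You then conclude ``hence $\mu(T)-\mu(h_i,p_b)=\dhab+\dhab^{-1}-\dha$''. That step silently substitutes $\mu(h_i,p_b)=\dha$, which is false. From $p_b$, avoiding $h_i$, one reaches $b$ and everything beyond $e$, so $\mu(h_i,p_b)=\db+\dhab^{-1}$. Taken at face value, your argument gives $\dha=\dhab+\dhab^{-1}-\dha$, i.e.\ $\dhab^{-1}=\dha-\db$, which has a negative coordinate.

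The confusion comes from the printed formula $\mu(T)-\mu(e,u)$ in case (i) of the definition. Taken literally it collapses to $\mu(e,v)$, which is exactly what you observed. The intended second vector is $\mu(T)-\mu(e,v)=\mu(e,u)$, the vector seen from the tail. The examples confirm this: $\mu(e_3)=\{(100100,\he),(011000,\ie)\}$ with $\mu(T_1)=111100$ in Fig.~\ref{fig:ex-network}, the entry of $h_e$ in Fig.~\ref{fig:specialretcherry}, and Corollary~\ref{coro:mu-parallel}. Your own R(r3) step already uses this tail reading when it identifies the $\ie$-tagged $\dhab$ in $\mu(e)$ with $\mu(e,p_b)$, so you mixed the two readings.

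With the intended reading, the $\ie$-component of $\mu(h_i)$ is $\mu(T)-\mu(h_i,p_a)=\mu(T)-\dha=\dhab+\dhab^{-1}-\dha$, which is the paper's computation. Alternatively, compute it directly as $\mu(h_i,p_b)=\db+\dhab^{-1}$, with no appeal to $\mu(T)$. Your formula also contradicts the entry $\dha\he, \db+m\ie$ for $h_i$ that you quoted from Fig.~\ref{fig:reductions-N}, which was the signal to recheck.
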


\begin{proof}
  Let $p_a$ and $p_b$ be the parents of $a$ and $b$ in $N$, respectively,
  and $h_i=p_bp_a$ the internal edge of $(a,b)$.
  Let $e$ be the edge incident to $p_b$ other than $p_bb$ and $h_i$.
  Let $\boldmu = \mu(N)$.
  By Proposition~\ref{prop:cherries-mu-N} we have that $(a,b)$ is a reticulate
  cherry in $\boldmu$. We now prove that the internal $\mu$-entry of $(a,b)$
  with respect to $\boldmu$ is
  precisely $\mu(h_i)$, so in particular it is in $\boldmu$.
  We consider the following cases (see Fig.~\ref{fig:reductions-N}, bottom).
  \begin{enumerate}[R(r2)]
  \item[R(r2)] If $p_b$ is a resolved root, then
    $\mu(h_i)=\{(\dha,\he)\}$.
    Note that in this case, the $\mu$-entry for $\dhab$ is $\{(\dhab,\rn)\}$.
  \item[R(r3)] If $\{p_b\}$ is an unresolved root component $T$, then
    $\mu(h_i)=\{(\dha,\he),(\mu(T)-\dha,\ie)\}$. Now, the edge
    $e$ is in $\adme(T)$, has $(\dhab,\ie)$ in its $\mu$-entry,
    and therefore $\mu(T)=\dhab+\dhab^{-1}$.
    Note that this case 
    includes the possibility that $e=h_e$, when $h_i$ and $h_e$ are parallel.
  \item[R(u)] If $e$ is undirected, it is in an unresolved root component $T$,
    $h_i \in \adme(T)$
    and, as before, $\mu(h_i)=\{(\dha,\he),(\dhab+\dhab^{-1}-\dha,\ie)\}$.
    Also, $\dhab^{-1}$ has tag $\te$ from $e$.
  \item[R(d)] If $e$ is directed,
    then $h_i$ is directed and not admissible,
    so $\mu(h_i)=\{(\dha,\he)\}$.
    In this case, $\mu(e)$ is either $\{(\dhab,\te)\}$ or
    $\{(\dhab,\te), (\dhab^{-1},\ie)\}$.
  \end{enumerate}
  In all cases, $\mu(h_i)$ is precisely $(a,b)$'s internal $\mu$-entry in $\mu(N)$.
\end{proof}

Recall that if $(a,b)$ is a reticulate cherry in $N$, then $\dha$ has
multiplicity 2 in $\boldmu=\mu(N)$.
Indeed, the two $\mu$-entries containing $\dha$ are the $\mu$-entries of the
internal and external edges of $(a,b)$,
as seen in the proof of Prop.~\ref{prop:cherries-mu-N}.
By Prop.~\ref{prop:distinguishable-ret}, the $\mu$-entry of the internal edge
is the internal $\mu$-entry of $(a,b)$.
Therefore, we define the \emph{external $\mu$-entry} of $(a,b)$
with respect to $\boldmu$
as the other $\mu$-entry in $\boldmu$ that contains $\dha$.
By Props.~\ref{prop:cherries-mu-N} and \ref{prop:distinguishable-ret},
it is the $\mu$-entry of the external edge of $(a,b)$. Thus, we
can identify the internal and the external edge of a reticulate cherry
in $\mu(N)$. Note that the internal and external $\mu$-entries might be
equal (as in $N_2$ in Fig.~\ref{fig:specialretcherry}).

For example, the internal $\mu$-entry of $(3,2)$ in the network $N$ of
Fig.~\ref{fig:ex-network}
is
$\{(\delta_{3,2},\he),(\delta_{0,3,2}+\delta_{0,3,2}^{-1}-\delta_{0,3},\ie)\}=\{(\delta_{3,2},\he),(\delta_{1,2},\ie)\}$,
which is from $e_3$, the internal edge of $(3,2)$. Then, the
external $\mu$-entry of $(3,2)$ is $\mu(e_4)$. For the reticulate
cherry $(4,5)$, the internal and external $\mu$-entries are equal
to $\{(\delta_{0,4},\he),((2,0,0,1,1,1),\ie)\}$.

\smallskip

Before defining the reductions of cherries in $\boldmu$, we need to
identify some other $\mu$-entries involved in the cherries.
Let $(a,b)$ be a (tree or reticulate) cherry of $N$. Recall that, by
Lemma~\ref{lem:mu-leaves}, $\da$ and $\db$ exist with multiplicity 1 in $\boldmu=\mu(N)$.
Also, by the proof of Prop.~\ref{prop:cherries-mu-N}, we have that
$\db^{-1}$ exists in $\boldmu$ if $(a,b)$ is of type T(r3) or R(r3),
and that $\dab^{-1}$ (resp. $\dhab^{-1}$) exists if $(a,b)$ is of type T(u) (resp. R(u)).
We are now ready to define reductions of cherries in $\boldmu$.

\begin{definition}
  Let $\boldmu=\mu(N)$ for some $N$.
  Let $a,b\in\cL$ with taxa $\cL$ listed in the order $(a,b,\ldots)$.
  For a tree cherry $(a,b)$ of $\boldmu$, the \emph{reduction} of $(a,b)$
  in $\boldmu$, denoted as $\boldmu^{(a,b)}$, is the multiset obtained by applying
  to $\boldmu$ the following operations in this order.
  \begin{enumerate}
  \item {If $(a,b)$ is of type T(r2) or T(d), remove the entry of $\delta_b$.\\
    If $(a,b)$ is of type T(r3), remove $\delta_b^{-1}$ and $\dab$.\\
    If $(a,b)$ is of type T(u), remove the entry of $\delta_b$ and
    change the tag of $\dab^{-1}$ to $\ie$.}
  \item Remove the entry of $\delta_a$.
  \item Change each simple $\mu$-vector $(m_0,m_a,m_b,m_3,\ldots)$
    to $(m_0,0,m_b,m_3,\ldots)$.
  \end{enumerate}
\end{definition}

Using the distinction between the internal and external $\mu$-entries,
we are able to
define the reduction of reticulate cherries in $\mu(N)$.

\begin{definition}
  Let $\boldmu=\mu(N)$ for some $N$.
  Let $a,b\in\cL$ with taxa $\cL$ listed in the order $(a,b,\ldots)$.
  For a reticulate cherry $(a,b)$ of $\boldmu$,
  the \emph{reduction} of $(a,b)$ in $\boldmu$, denoted as $\boldmu^{(a,b)}$, is
  the multiset obtained by applying to $\boldmu$ the following operations in this order.
  \begin{enumerate}
    \item If $(a,b)$ is of type R(r2) or R(d), remove the entry of $\delta_b$.\\
    If $(a,b)$ is of type R(r3), remove $\delta_b^{-1}$ and
      one tagged $\mu$-vector\footnotemark $(\dhab,\ie)$.\\
    If $(a,b)$ is of type R(u), remove the entry of $\delta_b$ and
    change the tag of $\dhab^{-1}$ to $\ie$.
    \item Remove the $\mu$-entry of $\delta_a$.
    \item Remove the internal $\mu$-entry of $(a,b)$
    (reduce its multiplicity from 2 to 1
     if the internal and external entries are the same).
    \item Change the tag of the unique $(\dha,\he)$ remaining to $\te$,
    \item Change each simple $\mu$-vector
    $(m_0,m_a,m_b,m_3,\ldots)$ to $(m_0-m_a,m_a-m_b,m_b,m_3,\ldots)$.
  \end{enumerate}
\end{definition}
\footnotetext{
  If $\dhab$ has multiplicity 2 in $\boldmu$,
  by Corollary~\ref{coro:mu-parallel} both
  $\mu$-entries containing $\dhab$ are equal,
  so it does not matter from which one $(\dhab,\ie)$ is removed.
}

\begin{proposition} \label{prop:cherry-mu}
  Let $(a,b)$ be a cherry in $N$. Then,
  \( \mu(N^{(a,b)})=\mu(N)^{(a,b)} \).
\end{proposition}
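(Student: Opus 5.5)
We prove Proposition~\ref{prop:cherry-mu} by a case analysis on the eight cherry types, using Proposition~\ref{prop:cherries-mu-N} to identify the type of $(a,b)$ in $\mu(N)$ with its type in $N$. For each type we compare, edge by edge and root component by root component, the entries of $\mu(N^{(a,b)})$ with those produced by the reduction steps on $\mu(N)$, following the right-hand table of Fig.~\ref{fig:reductions-N}. We split the entries of $N$ into two classes: the \emph{local} entries (edges incident to $a$, $b$, $p_a$, $p_b$, and the root component containing $p_b$ if any), and the \emph{distant} entries (all others).

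\textbf{Distant entries.} Every edge $f$ of $N$ not deleted or merged by the reduction corresponds to an edge $f'$ of $N^{(a,b)}$, and every root component corresponds to a root component. We check that the simple $\mu$-vectors transform by step 3 (tree case) or step 5 (reticulate case).

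In the tree case, paths avoiding $f$ to any leaf $i\neq a$ or to any hybrid node are in bijection before and after the reduction, since we only delete the pendant edge to $a$ and suppress a degree-2 node. Paths to $a$ disappear, so the coordinate $m_a$ becomes $0$.

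In the reticulate case, let $x$ be a node. Every path from $x$ to $a$ enters $p_a$ through $h_i$ or through $h_e$.
\begin{itemize}
\item Paths through $h_i$ come via $p_b$, so they are exactly the paths from $x$ to $b$. This gives old $m_a = m_b + (\text{paths via } h_e)$.
\item After the reduction, $a$ is reached only via the former $h_e$. Hence the new $a$-count is $m_a-m_b$.
\item The hybrid $p_a$ disappears, and each path to $p_a$ corresponds to a path to $a$. Hence the new $0$-count is $m_0-m_a$.
\end{itemize}
The restriction to paths avoiding $f$ changes nothing, because $f$ is distant. Tags of distant edges are unchanged: head-hybridness is preserved, and membership in $\adme(T)$ of an unresolved $T$ is preserved. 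The exceptions are edges adjacent to the root component containing $p_b$, which we treat as local. Root-component vectors are sums of two such vectors, so they transform correctly too.

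\textbf{Local entries.} We verify the table in Fig.~\ref{fig:reductions-N} type by type. For example, in type T(u) the edges $e_a$ and $e_b$ and the node $p_b$ disappear, and $e$ merges with $e_b$ into an edge $\dbar{wb}$ adjacent to an unresolved component. Its entry becomes $\{(\db,\te),(m,\ie)\}$, where $m=\dab^{-1}$ restricted. This is exactly ``remove the entry of $\db$, retag $\dab^{-1}$ as $\ie$'', after which step 3 turns $\dab$ into $\db$.

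In the reticulate types, steps 2 to 4 carry out the following:
\begin{itemize}
\item deletion of $e_a$;
\item deletion of $h_i$, using Proposition~\ref{prop:distinguishable-ret} to know which entry is removed;
\item merging of $h_e$ with the new pendant edge into $a$. Its $\he$-vector $\dha$ becomes, under step 5, $\delta_a$ tagged $\te$, and any $\ie$-vector $\widetilde m$ is untouched because it counts paths avoiding $h_e$.
\end{itemize}

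\textbf{Main obstacle.} The hardest part is the (r3) types. There, $p_b$ becomes a resolved root of degree 2, so two subtleties arise.
\begin{enumerate}
\item \emph{Disappearing $\ie$-vectors.} The component $\{p_b\}$ becomes resolved, so the entries of all edges adjacent to it lose their $\ie$-vectors. Only $e_b$ and $e$ remain adjacent, which is why $\db^{-1}$ and the $\ie$-copy of $\dab$ or $\dhab$ are removed.
\item \emph{Root vector.} We must check that the root vector $\dab+m$ (respectively $\dhab+m$) maps to $\db+m$ under the coordinate change. For the reticulate case this needs $m$ to have zero $a$-coordinate contribution beyond what step 5 accounts for, and this requires care.
\end{enumerate}
The parallel-edge subcase of R(r3), where $N_2$ is a component (Corollary~\ref{coro:mu-parallel}), must be checked separately. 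There the reduction leaves a resolved root $p_b$ with edges to $a$ and $b$, so the result should be the entries $(\dab,\rn)$, $\da$ and $\db$ after the transformation, and we verify directly that the prescribed removals produce exactly these.
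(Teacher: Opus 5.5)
Your strategy is the paper's own: a type-by-type comparison against Fig.~\ref{fig:reductions-N}, with the coordinate change on all other entries justified by path counting. Your counting argument correctly handles every vector that survives steps~1--4, except the one that counts paths from $b$ itself. That one is exactly where the local check in type R(r3) fails under the literal definition of the reduction; you announce this check but do not carry it out.

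In R(r3), $p_b$ becomes a resolved root and is not suppressed, so $e_b$ survives and its entry after step~1 is $\{(\db,\te)\}$. Step~5, however, acts on every simple $\mu$-vector. It sends $\db=(0,0,1,0,\ldots)$ to $(0,-1,1,0,\ldots)$, whereas $\mu(N^{(a,b)})$ contains $\{(\db,\te)\}$. This is precisely the case $x=b$, where your identity ``paths through $h_i$ = paths to $b$'' breaks: there is one path to $b$ and none to $a$. So your assertion that in the parallel subcase the prescribed operations produce exactly $(\dab,\rn)$, $\da$ and $\db$ is false. The root vector $(2,2,1)$ and $\dha$ are correctly sent to $\dab$ and $\da$, but $\db$ is not. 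To close the proof you must exempt the surviving vector of $e_b$ from step~5, and say so explicitly. This is the intended behaviour shown in Fig.~\ref{fig:reductions-N} and in Fig.~\ref{fig:ex-network-reduction}, where $e_2$ keeps $001000$ when $(3,2)$ is reduced. The paper's sketch avoids the point only by claiming the coordinate change for edges outside the cherry and reading the local rows off the table.

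Your justification that the $\ie$-vector $\widetilde m$ of $h_e$ is untouched ``because it counts paths avoiding $h_e$'' is also wrong. Such paths can still reach $p_a$ and $a$ through $h_i$ whenever the tail $v$ of $h_e$ reaches $p_b$, which happens in types R(d) and R(u). For example, let $v$ be an unresolved root with children $p_a$, $c$ and $w$; let $w$ have children $d$ and $p_b$; and let $p_b$ have children $p_a$ and $b$. Then $(a,b)$ is of type R(d) and $\widetilde m$ has $m_0=m_a=m_b=1$, while the $\ie$-vector of the new edge $\dbar{va}$ has $m_0=m_a=0$.

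The correct statement is that step~5 maps $\widetilde m$ to the right vector, by your own counting argument with $x=v$ and $f=h_e$. Every path from $v$ to $a$ avoiding $h_e$ passes through $h_i$, so $\widetilde m_a=\widetilde m_b$. Likewise, the root-vector point you leave as ``requires care'' needs nothing further. Step~5 is linear and maps $\dhab$ to $\db$. It also maps $m$ to the new count vector by the same argument, even though $m$ can have a nonzero $a$-coordinate coming from paths through $h_e$. So the table's $\db+m$ must simply be read with $m$ transformed.
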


\begin{proof}
  Let C be one of the types T(r2), T(r3), T(d) and T(u).
  Suppose $(a,b)$ is a tree cherry in $N$ of type C. Then
  $(a,b)$ is a tree cherry in $\mu(N)$ of the same type C, and
  the reduction of $(a,b)$ in $N$
  corresponds to the reduction of $(a,b)$ in $\mu(N)$.
  The last operation of the reduction of $(a,b)$ in $\mu(N)$
  corresponds to reducing the number of paths to reticulations and to leaves
  for the edges that are outside the cherry,
  similar to \textcite[Prop. 9]{2024Cardona-extendedmu}.

  An analogous argument can be used for reticulate cherries of each type.

  This correspondence is illustrated in Fig.~\ref{fig:reductions-N}, left.
\end{proof}

Let $S=(a_1,b_1)\ldots(a_k,b_k)$ be
such that $(a_1,b_1)$ is a cherry of $\boldmu$ and $(a_i,b_i)$ is a cherry of
$(\ldots \boldmu^{(a_1,b_1)}\ldots)^{(a_{i-1},b_{i-1})}$ for $i\in\{2,\ldots,k\}$.
We call $S$ a \emph{reduction sequence for $\boldmu$}, and we denote
$\boldmu^S=(\ldots \boldmu^{(a_1,b_1)})\ldots)^{(a_k,b_k)}$.
If $S$ is such that $\boldmu^S$ is a trivial forest, we say that $S$ is \emph{complete}
and that $\boldmu$ is an \emph{orchard} $\mu$-representation.
The following result is obtained by iteratively applying Prop.~\ref{prop:cherry-mu}.

\begin{coro}
  Let $S$ be a reduction sequence for $N$. Then,
  \[ \mu(N^{S})=\mu(N)^{S}.\]
\end{coro}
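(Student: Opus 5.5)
The plan is induction on the length $k$ of the reduction sequence $S=s_1\ldots s_k$, using Proposition~\ref{prop:cherry-mu} at each step. The empty sequence gives $\mu(N^{S})=\mu(N)=\mu(N)^{S}$ trivially, and the case $k=1$ is exactly Proposition~\ref{prop:cherry-mu}.

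For the inductive step, write $S=S's_k$ with $S'=s_1\ldots s_{k-1}$. Since $S$ is a reduction sequence for $N$, the prefix $S'$ is also one, so the induction hypothesis gives $\mu(N^{S'})=\mu(N)^{S'}$. Set $N'=N^{S'}$. This is again a complete binary $\cL$-network, by iterating the lemma that cherry reduction preserves completeness and binarity. By definition of a reduction sequence, $s_k$ is a cherry of $N'$. Applying Proposition~\ref{prop:cherry-mu} to $N'$ and $s_k$ then gives
\[
\mu(N^{S})=\mu\bigl((N')^{s_k}\bigr)=\mu(N')^{s_k}=\bigl(\mu(N)^{S'}\bigr)^{s_k}=\mu(N)^{S}.
\]

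The one point needing care is that the right-hand side is well defined. The reduction $\boldmu^{(a,b)}$ was only defined for $\boldmu=\mu(N)$ and for $(a,b)$ a cherry of $\boldmu$ with a specified type. So at each step I must check two things:
\begin{itemize}
\item $\mu(N)^{S'}$ is of the form $\mu(N')$, which the induction hypothesis provides;
\item $s_k$ is a cherry of $\mu(N')$ of the same type as in $N'$, which Proposition~\ref{prop:cherries-mu-N} provides.
\end{itemize}
Together these show that $S$ is also a reduction sequence for $\mu(N)$ in the sense defined above, with matching types at every step.

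I expect this bookkeeping, rather than any computation, to be the main obstacle. The substantive work is already contained in Proposition~\ref{prop:cherry-mu}.
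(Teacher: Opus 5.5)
Your proposal is correct and follows the same route as the paper, which obtains the corollary simply by iterating Proposition~\ref{prop:cherry-mu}. Your induction makes that iteration explicit. Your use of the induction hypothesis together with Proposition~\ref{prop:cherries-mu-N} is exactly the bookkeeping the paper leaves implicit: it checks that $\mu(N)^{S'}$ is again of the form $\mu(N')$, and that $s_k$ is a cherry of the same type there, so the reduction on the right-hand side is defined at every step.
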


In section~\ref{sec:example} we provide an example to demonstrate the previous
result for a complete reduction sequence $S$.

\section{Encoding and comparison of orchard $\cL$-networks}
\label{sec:classification}


In this section, we present Algorithm~\ref{alg:reconstruction} to construct an
orchard $\cL$-network given an orchard $\mu$-representation $\boldmu$.
Then we prove that, if $N$ is orchard and if $\boldmu=\mu(N)$,
Algorithm~\ref{alg:reconstruction} reconstructs $N$.
In doing so, we prove that if two orchard networks have the same
edge-based $\mu$-representation then they are isomorphic. In other words,
the $\mu$-representation is an invariant for the class of orchard $\cL$-networks.
This encoding then leads to a dissimilarity between $\cL$-networks
(binary or not)
that is a distance in the class of orchard (binary) $\cL$-networks.

\begin{algorithm}
  \caption{reconstruct($\boldmu$)} \label{alg:reconstruction}

  \KwData{An orchard $\mu$-representation $\boldmu$.}
  \KwResult{A $\cL$-network $N_0$ such that $\mu(N_0)=\boldmu$.}

  \tcp{Completely reduce $\boldmu$:}

  $\boldmu_0\gets\boldmu$\;
  $k\gets0$\;
  \While{$\boldmu_k$ is not a trivial forest}{
     Let $(a_{k+1},b_{k+1})$ be a cherry of $\boldmu_k$, and $T_{k+1}$ its type\;
     $\boldmu_{k+1}\gets\boldmu_k^{(a_{k+1},b_{k+1})}$\;
     $k\gets k+1$\;
  }

  \tcp{Construct $N_0$ from all $\boldmu_i$, $(a_i,b_i)$ and $T_i$:}

  $N_k\gets$ trivial forest associated with $\boldmu_k$\;
  \While{$k>0$}{
     Let $N_{k-1}$ be obtained by adding to $N_k$ cherry $(a_k,b_k)$ of type $T_k$\;
     $k\gets k-1$\;
  }

  \Return{$N_0$}\;
\end{algorithm}

\begin{theorem} \label{thm:alg-unique}
  Let $\boldmu$ be an orchard edge-based $\mu$-representation.
  Then the reduction loop (lines 3-7) of Algorithm \ref{alg:reconstruction} terminates,
  and the $\cL$-network $N$ obtained from Algorithm \ref{alg:reconstruction} is
  orchard and such that $\mu(N)=\boldmu$.
  Further, if $\mu(N_1)=\mu(N_2)$ is orchard, then $N_1$ and $N_2$ are isomorphic.
\end{theorem}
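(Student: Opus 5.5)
The plan is to reduce everything to the single-step commutation result Prop.~\ref{prop:cherry-mu}, the cherry correspondence Prop.~\ref{prop:cherries-mu-N}, and the reversibility Prop.~\ref{prop:reduce-add}. The one delicate point is that $\boldmu$ is only assumed to be \emph{orchard as a $\mu$-representation}. The reductions on $\boldmu$ are defined only for $\boldmu=\mu(N)$, so I read the hypothesis as $\boldmu=\mu(N')$ for some network $N'$ with a complete reduction sequence $S$ for $\boldmu$.

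First I will show that $N'$ itself is orchard. By induction on the length of $S=s_1\ldots s_k$:
\begin{itemize}
\item If $s_1$ is a cherry of type C in $\mu(N')$, then by Prop.~\ref{prop:cherries-mu-N} it is a cherry of type C in $N'$.
\item By Prop.~\ref{prop:cherry-mu}, $\mu(N'^{s_1})=\boldmu^{s_1}$, so the induction continues.
\end{itemize}
Hence $S$ is a reduction sequence for $N'$ with $\mu(N'^S)=\boldmu^S$, which is a trivial forest. By Prop.~\ref{prop:trivial-forest-mu}, $N'^S$ is a trivial forest, so $N'$ is orchard.

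\textbf{Termination of lines 3--7.} Each step of the loop chooses \emph{some} cherry $(a_{k+1},b_{k+1})$ of $\boldmu_k$, not necessarily one in $S$. I maintain the invariant $\boldmu_k=\mu(N'_k)$ with $N'_k$ orchard.
\begin{itemize}
\item If $\boldmu_k$ is not a trivial forest, then neither is $N'_k$, by Prop.~\ref{prop:trivial-forest-mu}. Being orchard, $N'_k$ has a cherry, so by Prop.~\ref{prop:cherries-mu-N} a cherry of $\boldmu_k$ exists and the loop can proceed.
\item The chosen cherry is a cherry of the same type in $N'_k$. Setting $N'_{k+1}=N'^{(a_{k+1},b_{k+1})}_k$, Prop.~\ref{prop:order-semi} keeps $N'_{k+1}$ orchard, and Prop.~\ref{prop:cherry-mu} gives $\boldmu_{k+1}=\mu(N'_{k+1})$.
\item Every reduction strictly decreases the number of edges of $N'_k$ (a leaf edge or a hybrid edge is deleted). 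The edge count is finite, so the loop terminates, and it can only stop at a trivial forest.
\end{itemize}

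\textbf{Correctness of the output.} When the loop stops, $N'_k$ is a trivial forest whose $\mu$-representation is $\boldmu_k$. By Prop.~\ref{prop:trivial-forest-mu}, the forest built in line 8 equals $N'_k$, with the same leaf labels. By downward induction on $j$, the network $N_j$ built by the algorithm equals $N'_j$ up to isomorphism. Indeed, $(a_j,b_j)$ is a cherry of type $T_j$ in $N'_{j-1}$, so Prop.~\ref{prop:reduce-add} gives
\[
{}^{(a_j,b_j)_{T_j}}N'_j={}^{(a_j,b_j)_{T_j}}\bigl(N'^{(a_j,b_j)}_{j-1}\bigr)=N'_{j-1}.
\]
Cherry addition is defined up to isomorphism from the isomorphism class of its input and the labels, since it only refers to $b$, its parent $w$ and the edge $\dbar{wb}$. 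Hence $N_{j-1}\cong N'_{j-1}$. At $j=0$ we get $N_0\cong N'$, so $N_0$ is orchard and $\mu(N_0)=\mu(N')=\boldmu$.

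\textbf{Uniqueness.} Let $\mu(N_1)=\mu(N_2)=\boldmu$ be orchard. Run the algorithm once, fixing the sequence of chosen cherries and their types. By the argument above, applied with $N'=N_1$ and then with $N'=N_2$, the same output satisfies $N_0\cong N_1$ and $N_0\cong N_2$. The sequence of cherries and types is determined by $\boldmu$ alone: types are read off from tags by Defs.~\ref{def:mu-semi-tree} and \ref{def:mu-semi-ret}. Hence $N_1\cong N_2$.

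\textbf{Main obstacle.} The hard part is verifying that addition is well defined on isomorphism classes in the degenerate cases. These are the parallel-edge case handled by Cor.~\ref{coro:mu-parallel} and Prop.~\ref{prop:parallel}, and the R(r3) case, where the internal and external edges are interchangeable. In both, I expect the two possible choices to yield isomorphic networks, because the edges are exchangeable.
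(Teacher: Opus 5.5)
Your proof is correct and follows essentially the paper's route. The same ingredients (Props.~\ref{prop:cherries-mu-N}, \ref{prop:cherry-mu}, \ref{prop:order-semi} and \ref{prop:reduce-add}) drive a reduce-then-rebuild argument, and fixing a realizing network $N'$ with $\mu(N')=\boldmu$, then tracking $N'_k$ alongside $\boldmu_k$, is an explicit unrolling of the paper's strong induction on the number of non-root entries; it also makes rigorous two steps the paper glosses, namely applying Prop.~\ref{prop:order-semi} (a network statement) to $\boldmu_1$ and the informal ``adding the cherry to $\boldmu_1$'' used to get $\mu(N_0)=\boldmu$.

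The ``main obstacle'' you flag is not actually one. Each addition type is a deterministic construction from the labelled leaves $a,b$ and their unique parent edges; for reticulate additions $p_a$ also subdivides the edge into $a$, which your description omits. So addition commutes with label-preserving isomorphisms even when R(r3) creates parallel edges. The internal/external ambiguity arises only on the reduction side, where Cor.~\ref{coro:mu-parallel} and Prop.~\ref{prop:cherry-mu} already absorb it.
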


\begin{proof}
  First, note that thanks to Proposition~\ref{prop:order-semi},
  the choice of cherry $(a_{k+1},b_{k+1})$ on line~4 does not matter,
  in the sense that if $\boldmu_k$ has multiple cherries, then $\boldmu_{k+1}$
  will remain orchard regardless of the choice.
  Therefore, the reduction loop will terminate
  whichever cherry is chosen, at iterations when  $\boldmu_k$ has multiple cherries.

  We prove the theorem by strong induction on the number of non-root $\mu$-entries
  in $\boldmu$. If $\boldmu$ has 0 non-root entries, then it is a trivial forest,
  the reduction loop has 0 iterations, and the theorem statement holds.

  Now assume that the result holds for all orchard $\mu$-representations with up to
  $k$ non-root $\mu$-entries.
  Let $\boldmu$ be orchard with $k+1$ non-root entries, and let $(a_1,b_1)$ be the cherry
  chosen in the first iteration of the loop on line 4. Then $\boldmu_1$ is orchard
  by Proposition~\ref{prop:order-semi} and has $k$ or fewer non-root entries.
  Therefore, by induction, the network $N_1$ created on line~10 is orchard and such that
  $\mu(N_1)=\boldmu_1$.
  Let $N_0$ be the network returned by the algorithm.
  Since $(a_1,b_1)$ is a cherry of $\boldmu_0$, by Proposition~\ref{prop:cherry-mu} we get
  that $\mu(N_0^{(a_1,b_1)})=\boldmu_0^{(a_1,b_1)}$, which is $\boldmu_1$ by construction.
  As $N_0$ and $\boldmu$ are obtained by adding to $N_1$ and $\boldmu_1$ 
  the cherry $(a_1,b_1)$ of the same type $T_1$, we get $\mu(N_0) = \boldmu$.
  Further, for any network $N$ such that $\mu(N) = \mu(N_0)$, we have that
  $(a_1,b_1)$ is a cherry of the same type in both $N$ and $N_0$ 
  by Proposition~\ref{prop:cherries-mu-N}.
  Hence $N$ and $N_0$ are isomorphic, as both are obtained
  by adding $(a_1,b_1)$ of type $T_1$ to $N_1$, which is unique up to isomorphism by induction.
\end{proof}

\begin{obs}
  Algorithm \ref{alg:reconstruction} can be used to determine if an
  arbitrary $\cL$-network $N$ is orchard, by applying the
  reduction loop (lines 3-7) to $\boldmu=\mu(N)$.
  Indeed, $N$ is not an orchard $\cL$-network if, and only if, at some iteration
  $\boldmu_k$ is not a trivial forest but does not contain any cherry
  (from Proposition~\ref{prop:order-semi}).
\end{obs}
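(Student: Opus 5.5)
The plan is to run the reduction loop on $\boldmu_0=\mu(N)$ and keep it synchronized with a reduction of $N$ itself. Each claim about the $\mu$-side will then be transported to the network side through Propositions~\ref{prop:cherries-mu-N}, \ref{prop:cherry-mu} and~\ref{prop:trivial-forest-mu}.

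\emph{Invariant.} By induction on $k$, as long as the loop has run $k$ iterations with chosen cherries $s_1,\ldots,s_k$, we have that $S_k=s_1\ldots s_k$ is a reduction sequence for $N$ and that $\boldmu_k=\mu(N^{S_k})$. The step goes as follows. If $s_{k+1}$ is a cherry of type C of $\boldmu_k=\mu(N^{S_k})$, then Proposition~\ref{prop:cherries-mu-N} says it is a cherry of the same type C of $N^{S_k}$, so $S_{k+1}$ is a reduction sequence. Proposition~\ref{prop:cherry-mu} then gives $\mu(N^{S_{k+1}})=\mu(N^{S_k})^{s_{k+1}}=\boldmu_{k+1}$. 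This invariant also ensures that the reductions on $\boldmu_k$ are well defined, since they were only defined for $\mu$-representations of actual networks.

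\emph{The loop always halts.} Each cherry reduction of a network deletes at least one edge (the leaf edge of $a$, or the internal edge $h_i$) and creates no new non-root structure. Hence the number of edges of $N^{S_k}$ strictly decreases with $k$. So after finitely many steps, either $\boldmu_k$ is a trivial forest or $\boldmu_k$ has no cherry.

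\emph{($\Leftarrow$) If $N$ is not orchard, the loop gets stuck.} We argue by contrapositive. Suppose the loop never reaches a state that is non-trivial and cherry-free. Then by the halting argument it stops at some $\boldmu_k$ that is a trivial forest. By the invariant and Proposition~\ref{prop:trivial-forest-mu}, $N^{S_k}$ is a trivial forest, so $S_k$ is a complete reduction sequence and $N$ is orchard.

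\emph{($\Rightarrow$) If $N$ is orchard, the loop never gets stuck.} Applying Proposition~\ref{prop:order-semi} repeatedly along $S_k$, every $N^{S_k}$ is orchard. Suppose $\boldmu_k$ is not a trivial forest. Then, by Proposition~\ref{prop:trivial-forest-mu}, neither is $N^{S_k}$. An orchard network that is not a trivial forest has a complete reduction sequence of positive length, and its first element is a cherry of $N^{S_k}$. By Proposition~\ref{prop:cherries-mu-N}, that cherry is also a cherry of $\boldmu_k$. So the loop is never stuck at a non-trivial state.

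The step I expect to need the most care is the invariant when $N$ is \emph{not} orchard. There, Proposition~\ref{prop:order-semi} is unavailable, and one must check that Propositions~\ref{prop:cherries-mu-N} and~\ref{prop:cherry-mu} were proven for arbitrary (complete, binary) networks rather than only orchard ones. Reading their proofs, they were. The halting argument should also be phrased on the network side, since counting non-root $\mu$-entries directly could be ambiguous.
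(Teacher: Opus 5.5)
Your proof is correct and is essentially the paper's one-line justification made explicit. Proposition~\ref{prop:order-semi} ensures that an orchard network never gets stuck, whichever cherry is chosen, while Propositions~\ref{prop:cherries-mu-N}, \ref{prop:cherry-mu} and~\ref{prop:trivial-forest-mu} maintain $\boldmu_k=\mu(N^{S_k})$, so reaching a trivial forest yields a complete reduction sequence for $N$. Your edge-count termination argument supplies a detail the paper leaves implicit for non-orchard inputs. Your ($\Leftarrow$)/($\Rightarrow$) labels are swapped relative to the statement's phrasing, which is harmless since both implications are proved.
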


As a direct consequence of Theorem~\ref{thm:alg-unique}, we get the following
fundamental result.

\begin{coro} \label{coro:iso}
  Let $N_1$ and $N_2$ be orchard $\cL$-networks. Then, $N_1\cong N_2$ if, and
  only if, $\mu(N_1)=\mu(N_2)$.
\end{coro}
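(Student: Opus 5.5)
The plan is to prove the two implications of Corollary~\ref{coro:iso} separately, obtaining the nontrivial one directly from Theorem~\ref{thm:alg-unique}.

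For the forward implication, suppose $N_1\cong N_2$ via an isomorphism $\phi$ that preserves edge directions and leaf labels. Then $\phi$ induces bijections between the following objects:
\begin{itemize}
\item semidirected paths, preserving their endpoints;
\item hybrid nodes;
\item root components, preserving whether each is resolved or unresolved;
\item admissible edges.
\end{itemize}
Consequently, for every edge $e=uv$ of $N_1$ and every $i\in\{0\}\cup[n]$, we have $m_{N_1}(v,i;e)=m_{N_2}(\phi(v),i;\phi(e))$. For $i=0$ this uses the fact that hybrid nodes correspond bijectively. Hence $\mu(e,v)=\mu(\phi(e),\phi(v))$. The tags depend only on three properties: the direction of $e$, whether its head is hybrid, and membership in $\adme(T)$ for an unresolved root component $T$. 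All of these are preserved by $\phi$, and the $\mu$-entry of each root component is also preserved. Therefore $\mu(N_1)=\mu(N_2)$ as multisets. This part is routine, and it requires neither the orchard hypothesis nor binarity.

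For the converse, assume that $N_1,N_2$ are orchard and that $\mu(N_1)=\mu(N_2)=:\boldmu$. We need to show that $\boldmu$ is an orchard $\mu$-representation, so that the final clause of Theorem~\ref{thm:alg-unique} applies. Take a complete reduction sequence $S$ for $N_1$. By the corollary of Proposition~\ref{prop:cherry-mu}, $\mu(N_1^S)=\boldmu^S$. Since $N_1^S$ is a trivial forest, Proposition~\ref{prop:trivial-forest-mu} shows that $\boldmu^S$ is a trivial forest. We must also check that $S$ is a reduction sequence for $\boldmu$. This holds inductively: each step is a cherry of $\mu(N_1^{s_1\ldots s_{i-1}})=\boldmu^{s_1\ldots s_{i-1}}$ by Proposition~\ref{prop:cherries-mu-N}. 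Hence $\boldmu$ is orchard, and Theorem~\ref{thm:alg-unique} gives $N_1\cong N_2$.

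The main point needing care is this last step. We must ensure that the inductive argument in Theorem~\ref{thm:alg-unique} genuinely applies to both networks. It must use, via Proposition~\ref{prop:cherries-mu-N}, that the cherry $(a_1,b_1)$ of $\boldmu$ has the same type in $N_1$ and $N_2$. Both networks then reduce to networks with equal $\mu$-representations, namely $\boldmu^{(a_1,b_1)}$, by Proposition~\ref{prop:cherry-mu}. These reduced networks are orchard by Proposition~\ref{prop:order-semi}, hence isomorphic by induction. Applying the same type-specific cherry addition and Proposition~\ref{prop:reduce-add} then recovers $N_1$ and $N_2$ up to isomorphism. Given the theorem, this is exactly what is needed, so the corollary follows.
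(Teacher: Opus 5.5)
Your proposal is correct and follows the paper's route: the paper states this corollary as a direct consequence of Theorem~\ref{thm:alg-unique}, with the forward direction being the evident invariance of $\mu$ under isomorphism. Your explicit check that $\mu(N_1)$ is an orchard $\mu$-representation fills in a step the paper leaves implicit. You transport a complete reduction sequence of $N_1$ using Propositions~\ref{prop:cherries-mu-N}, \ref{prop:cherry-mu} and \ref{prop:trivial-forest-mu}, and this is exactly what is needed to invoke the theorem's last clause.
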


\bigskip

Finally, we introduce the edge-based dissimilarity to compare $\cL$-networks using
the edge-based $\mu$-representation.

\begin{definition}
  Let $N_1$ and $N_2$ be (not necessarily binary) $\cL$-networks.
  Their \emph{edge-based $\mu$-dissimilarity} is
  \[
    d_\mu(N_1,N_2)=|\mu(N_1)\triangle \mu(N_2)|
  \]
  where $\triangle$ denotes the symmetric difference of multisets.
\end{definition}

\begin{theorem}
  For a fixed supertaxon set $\cL$, $d_\mu$
  is a distance on the class of (binary) orchard $\cL$-networks.
\end{theorem}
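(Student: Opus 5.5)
We check the metric axioms for $d_\mu$ on the class of binary orchard $\cL$-networks, regarded up to isomorphism. The approach is to reduce everything to the fact that the symmetric difference of finite multisets is a metric on the set of finite multisets, and then use Corollary~\ref{coro:iso} to make $\mu$ injective on isomorphism classes.

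First, $d_\mu$ is well defined on isomorphism classes. An isomorphism of $\cL$-networks preserves labels, edge directions, root components and path counts. It therefore maps each edge or root component to one with the same $\mu$-entry, so $\mu(N)$ depends only on the isomorphism class of $N$.

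Second, on finite multisets $A,B,C$ over a common ground set, write $A(x)$ for the multiplicity of $x$ in $A$. Then
\[
|A\triangle B|=\sum_x |A(x)-B(x)|
\]
is the $\ell_1$ distance between the multiplicity functions. This is a metric:
\begin{itemize}
\item it is nonnegative;
\item it is symmetric;
\item it vanishes if and only if $A=B$;
\item it satisfies the triangle inequality, which follows termwise from $|A(x)-C(x)|\le|A(x)-B(x)|+|B(x)-C(x)|$.
\end{itemize}
Consequently, $d_\mu(N_1,N_2)=|\mu(N_1)\triangle\mu(N_2)|$ is nonnegative and symmetric, and satisfies the triangle inequality for any three networks, orchard or not. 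Hence $d_\mu$ is a pseudometric on all $\cL$-networks.

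Third, for identity of indiscernibles, suppose $d_\mu(N_1,N_2)=0$. Then $\mu(N_1)=\mu(N_2)$, and since both networks are orchard, Corollary~\ref{coro:iso} gives $N_1\cong N_2$. Conversely, $N_1\cong N_2$ gives $\mu(N_1)=\mu(N_2)$ by the first step, so $d_\mu(N_1,N_2)=0$.

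The only substantive ingredient is the separation property, and it is supplied entirely by Corollary~\ref{coro:iso}, which in turn rests on Theorem~\ref{thm:alg-unique}. The remaining care is to state precisely what "multiplicity" means for the elements being compared. The elements of $\mu(N)$ are $\mu$-entries, which are themselves multisets of one or two tagged vectors, and equality of entries is equality as multisets. Because of this, the symmetric difference must be computed with multiplicities rather than as sets; otherwise repeated entries, such as the duplicated entry of parallel edges in $N_2$, would be lost and the equivalence "$d_\mu=0$ iff $\mu(N_1)=\mu(N_2)$" would fail. With this convention, $d_\mu$ is a distance on the isomorphism classes of binary orchard $\cL$-networks.
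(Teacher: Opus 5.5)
Your proof is correct and follows the same route as the paper: non-negativity, symmetry and the triangle inequality come from $d_\mu$ being the cardinality of a multiset symmetric difference, and separation is exactly Corollary~\ref{coro:iso}. Beyond the paper, you make the triangle inequality explicit as an $\ell_1$ bound on multiplicity functions, and you spell out two points it leaves implicit: that $d_\mu$ is well defined on isomorphism classes, and that multiplicities must be counted.
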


\begin{proof}
  By its definition, $d_\mu$ is non-negative and symmetric.
  Since it is defined from the cardinality of the symmetric difference of multisets,
  it also satisfies the triangle inequality.
  Therefore, it only remains to prove the separation property.

  Let $N_1$ and $N_2$ be orchard $\cL$-networks.
  If $d_\mu(N_1,N_2)=0$, then $\mu(N_1)=\mu(N_2)$.
  By Corollary~\ref{coro:iso}, this implies that $N_1\cong N_2$.
  Conversely, if $N_1\cong N_2$ then clearly $\mu(N_1)=\mu(N_2)$, hence
  $d_\mu(N_1,N_2)=0$.
  Therefore, $d_\mu$ is a distance on the class of orchard $\cL$-networks.
\end{proof}

The distance $d_\mu$ can be computed in polynomial time.
Indeed, by the discussion after Lemma~\ref{lem:mu-leaves}, the edge-based
$\mu$-representation of a network $N$ can be computed in $\mathcal{O}(n|E|)$ time,
where $n=|\cL|$ and $E$ is the edge set of $N$.
Then, after equipping the set of $\mu$-entries with any total order, one can sort
the multisets $\mu(N_1)$ and $\mu(N_2)$ and compute their symmetric difference
by a linear scan.
Therefore, if $k_e=\max\{|E_1|,|E_2|\}$,
where $E_1$ and $E_2$ are the edge sets of $N_1$ and $N_2$ respectively,
the comparison of two $\cL$-networks
takes $\mathcal{O}\bigl(k_e(n+\log k_e)\bigr)$ time.

\section{Examples}%
\label{sec:example}

\begin{figure*}[!h]
  \centering
  \includegraphics[scale=1.1,valign=m]{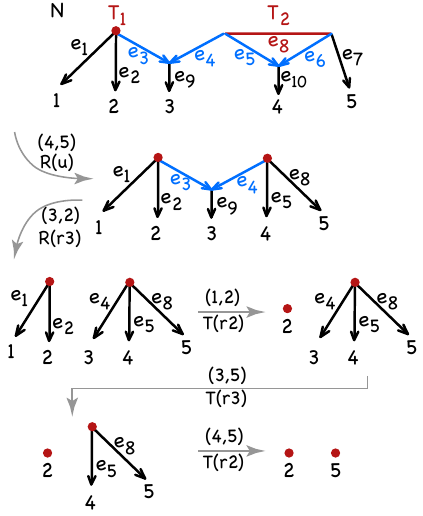}
  \hspace{.5cm}
  {\scriptsize
  \begin{tabular}{@{}L||L@{}}
            & \boldmu_0=\mu(N)       \\
    \midrule
    T_1    & 111100\rn               \\
    e_1    & 010000\te, 101100\ie    \\
    e_2    & 001000\te, 110100\ie    \\
    e_3    & 100100\he, 011000\ie    \\
    \midrule
    T_2    & 300121\rn               \\
    e_4    & 100100\he, 200021\ie    \\
    e_5    & 100010\he, 200111\ie    \\
    e_6    & 100010\he, 200111\ie    \\
    e_7    & 000001\te, 300120\ie    \\
    e_8    & 200110\te, 100011\te    \\
    \midrule
    e_9    & 000100\te               \\
    e_{10} & 000010\te               \\
  \end{tabular}
  \bigskip

  \begin{tabular}{@{}L||L|L|L|L|L@{}}
    & \boldmu_1 & \boldmu_2 & \boldmu_3 & \boldmu_4 & \boldmu_5 \\
    \midrule
    T_1    & 111100\rn                             & 011000\rn & 001000\rn                   & 001000\rn      & 001000\rn \\
    e_1    & 010000\te, 101100\ie                  & 010000\te & -                           & -              & - \\
    e_2    & 001000\te, 110100\ie                  & 001000\te & -                           & -              & - \\
    e_3    & 100100\he, 011000\ie                  & - & -                           & -              & - \\
    \midrule
    T_2    & 100111\rn                             & 000111\rn & 000111\rn                   & 000011\rn      & 000001\rn \\
    e_4    & 100100\he, 000011\ie                  & 000100{\color{blue!80}\te}, 000011\ie & 000100\te, 000011\ie        & 000010\te      & - \\
    e_5    & 000010{\color{blue!80}\te}, 100101\ie & 000010\te, 000101\ie & 000010\te, 000101\ie        & 000001\ie      & - \\
    e_6    & -                                     & - & -                           & -              & - \\
    e_7    & -                                     & - & -                           & -              & - \\
    e_8    & 100110\te, 000001\te                  & 000110\te, 000001\te & 000110\te, 000001\te        & -              & - \\
    \midrule
    e_9    & 000100\te                             & - & -                           & -              & - \\
    e_{10} & -                                     & - & -                           & -              & - \\
  \end{tabular}
  }
  \caption{\label{fig:ex-network-reduction}
    Top: Reduction of $N$ 
    by the complete reduction sequence $S=(4,5)(3,2)(1,2)(3,5)(4,5)$.
    Bottom: $\boldmu_1=\boldmu_0^{(4,5)},\ldots, \boldmu_5=\boldmu_4^{(4,5)}$
    constructed by the cherry reduction loop in Algorithm~\ref{alg:reconstruction}
    from $\boldmu_0 = \mu(N)$.
    The tags that are modified with respect to the previous $\mu$-representation
    are highlighted in blue.
    The addition loop then constructs $N_k$ such that
    $\boldmu_k=\mu(N_k)$.
    Each cherry is indicated with its type near grey arrows.
    Note that the type of each cherry is determined from $\boldmu_k$
    during the reduction, and is used by Algorithm~\ref{alg:reconstruction}
    to perform cherry additions.
  }
\end{figure*}

We first illustrate the $\mu$-representation
and Algorithm~\ref{alg:reconstruction} on the
$\cL$-network $N$ from Fig.~\ref{fig:ex-network} (left) on 5 taxa,
reproduced in Fig.~\ref{fig:ex-network-reduction} (top left).
Its $\mu$-representation $\mu(N)$ is shown in 
Fig.~\ref{fig:ex-network-reduction} (top right).
$N$ has 2 trees cherries, $(1,2)$ and $(2,1)$ of type T(r3);
and 3 reticulate cherries, $(3,1)$ and $(3,2)$ of type R(r3), and $(4,5)$ of type R(u).
Fig.~\ref{fig:ex-network-reduction} shows the reduction of $N$ by a
complete reduction sequence $S$, starting with $(4,5)$.

\noindent
Fig.~\ref{fig:ex-network-reduction} (bottom) 
shows the reduction of $\boldmu_0=\mu(N)$
by Algorithm~\ref{alg:reconstruction} (loop on lines 3-7)
using the same sequence $S$ as in Fig.~\ref{fig:ex-network-reduction} (top left),
leading to a trivial forest with leaves~2 and~5.
To reconstruct $N$, the algorithm starts with the trivial forest
and adds the cherries from $S$, each one
according to the type recorded
during the reduction loop.

\bigskip

Our next example illustrates the need to associate edges that are
directed yet admissible for rooting the semidirected network
with a bidirectional $\mu$-entry, in which one tagged $\mu$-vector is as
in \textcite{2023XuAne_identifiability} and the other $\mu$-vector is tagged $\ie$.
The $\cL$-network $N$ in Fig.~\ref{fig:counterex-noitag} (left) is orchard.
The $\cL$-network $N'$ obtained from $N$ by swapping taxon labels $2$ and $4$
is not isomorphic to $N$ (for example, $1$ and $2$ form a reticulate cherry
in $N$ but $1$ and $4$ do not). Accordingly, $\mu(N)\neq\mu(N')$, but are very
similar: every edge that cannot reach either $2$ or $4$, and whose incident
nodes cannot reach $2$ or $4$, have the same $\mu$-entry in both $N$ and $N'$.
$N$ and $N'$ also have the same set of 3 root $\mu$-entries.
Only 4 $\mu$-entries in $N$ distinguish $N$ from $N'$: $d_\mu(N,N')=8$.
They are from the 4 hybrid edges adjacent to the parents of $2$ and $4$.
As these are unresolved roots, their child edges have bidirectional $\mu$-entries
with $\ie$-tagged $\mu$-vectors (see Fig.~\ref{fig:counterex-noitag} right).
If their $\ie$-tagged components were removed, then the multiset of
$\mu$-entries would become identical between $N$ and $N'$,
showing that the $\ie$ tag is necessary for
Theorem~\ref{thm:alg-unique} and Corollary~\ref{coro:iso} to hold.

Note that this network $N$ provides an example illustrating that our definition
of orchard networks corresponds to `weakly' orchard (binary) networks as
defined by \textcite{2026Holtgrefe-multisemidirected}.
Indeed, by their Theorem~7, weakly orchard networks are exactly
those that admit a complete sequence of cherry reductions
(in their context of semidirected networks without parallel edges,
possibly non-binary).
Here, subdividing the edges marked with brown circles gives
a rooted orchard network, 
while subdividing the edges marked with orange triangles gives a rooted network
that is not orchard (without any cherry). Therefore $N$ is weakly but
not strongly orchard as per \textcite{2026Holtgrefe-multisemidirected}.

\begin{figure}[t]
\centering
\includegraphics[scale=1.1,valign=m]{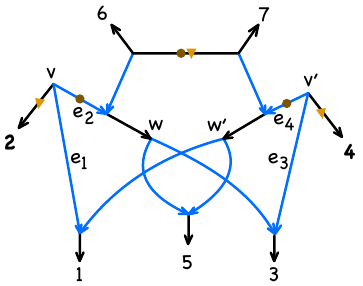}

\smallskip
{\small\begin{tabular}{@{}L|L@{}}
\mathrm{edge} & \mu\text{-entry} \\
\midrule
e_1 & 11{\bf 0}0{\bf 0}000\he, 30{\bf 1}1{\bf 0}100\ie\\
e_3 & 10{\bf 0}1{\bf 0}000\he, 31{\bf 0}0{\bf 1}100\ie \\
\midrule
e_2 & 30{\bf 0}1{\bf 0}100\he, 11{\bf 1}0{\bf 0}000\ie \\
e_4 & 31{\bf 0}0{\bf 0}100\he, 11{\bf 0}0{\bf 1}000\ie \\
\end{tabular}}
\caption{
 Top: 
 $[7]$-network $N$ reduced to the trivial forest
 on $\{1,2,4\}$ by the cherry-reduction sequence
 $(1,2)(3,4)(5,1)(1,4)(3,5)(5,2)(5,6)(6,7)(7,1)$. 
 The network $N'$ obtained from $N$ by swapping taxon labels $2$ and $4$
 is not isomorphic to $N$.
 With our $\mu$-representation that includes $\ie$ tags, $\mu(N)\neq\mu(N')$
 although are similar: they have the same multiset of tagged $\mu$-vectors
 when the components with the $\ie$ tag are removed.
 Subdividing edges with brown circles leads to a rooted, orchard network; while
 subdividing edges with orange triangles gives a rooted non-orchard network.
 \\
 Bottom: 
 $\mu$-entries of the 4 edges in $N$ that do not match any $\mu$-entry
 in $N'$. Brackets and parentheses have been removed for readability.
 Coordinates corresponding to $2$ and $4$ are in bold.
 Here $N$ has 3 root components and 1 undirected edge. Removing the 5 edges
 drawn closest to the top (and suppressing degree-2 nodes)
 provides a smaller, rooted network (with 2 unresolved roots),
 and simpler non-isomorphic orchard networks 
 with the same $\mu$-representation after removing $\ie$-tagged entries.
}\label{fig:counterex-noitag}
\end{figure}

\bigskip

Finally, Fig.~\ref{fig:non-orchard} shows a $[6]$-network $N$, which is not orchard as it does not
have any cherry.
Swapping leaf labels $5$ and $6$ in $N$ gives a network $N'$, also not orchard.
$N$ and $N'$ are not isomorphic but share the same edge-based $\mu$-representation.
This example is adapted from \cite[Fig.~3]{2024Cardona-extendedmu}
on rooted networks,
first introduced in \cite[Fig.~5]{2009Cardona-treechild}
for node-based $\mu$-representations.
To see that $N$ and $N'$ are not isomorphic,
consider for example that there is a path of 3 edges to leaf $1$
starting from the parent of $6$, but not from the parent of $5$.
Also, note that this example provides new cases of rooted non-orchard
networks $N^{+}\not\cong N'^{+}$ with the same node-based $\mu$-representation.
We can obtain $N^{+}$ from $N$ by subdividing any admissible edge of
$N$'s root component $T$ to place the root.
Then, we use that the node-based $\mu$-representation is contained
in $\mu(N^{+})$, by choosing the simple $\mu$-vectors
associated with the edge directions  in $N^{+}$ and ignoring
the tags. Doing so, we obtain 9 example pairs of rooted networks
(of distinct shapes, up to relabelling the leaves), non-orchard, that are
non-isomorphic yet non-distinguishable from their node-based $\mu$-representation.

\begin{figure}[t]
  \centering
  \includegraphics[scale=1,valign=m]{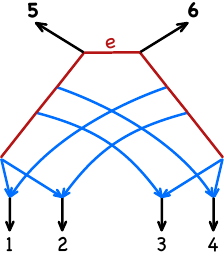}
  \caption{$[6]$-network $N$ without any cherry, hence not orchard.
  The network $N'$ obtained by swapping taxa $5$ and $6$ 
  is not isomorphic to $N$. Yet $N$ and $N'$ have the
  same edge-based $\mu$-representation.
  In fact, each $\mu$-entry is invariant to swapping $5$ and $6$.
  For example, the root component (red edges) has simple
  $\mu$-vector $(8,2,2,2,2,1,1)$,
  and $\mu(e) = \{((4,1,1,1,1,1,0),\te),((4,1,1,1,1,0,1),\te)\}$.
  }
  \label{fig:non-orchard}
\end{figure}

\section*{Code availability}

The \texttt{Julia} package \texttt{PhyloNetworks}
\cite{2017SolislemusBastideAne_PhyloNetworks}
implements simple and tagged $\mu$-vectors
in version 1.1.0.
Function \texttt{mudistance\_semidirected}
implements an edge-based $\mu$-distance
from an edge-based $\mu$-representation
in which $\mu$-vectors include the number of paths $m_0$ to hybrid nodes
\cite[as in][]{2024Cardona-extendedmu},
but unidirectional $\mu$-entries with a single 
$\mu$-vector for edges adjacent to a root component
\cite[as in][]{2025MaxfieldXuAne-mu-semidirected}.
Future development will add an option to use bidirectional $\mu$-entries
for these edges, with an extra $\ie$-tagged $\mu$-vector, to match the
$\mu$-representation described here.

\section*{Acknowledgements}
\noindent
We thank Mark Jones for contributions to initial discussions,
in particular to the example network in Fig.~\ref{fig:counterex-noitag}.
This work was supported in part by the National Science Foundation through grants
DMS-2023239 to C.A., and
DMS-1929284 while the authors visited 
the Institute for Computational and Experimental Research in Mathematics in
Providence, RI, during the Quantitative Phylogenomics semester program.
JCP was supported in part from grant PID2021-126114NB-C44 funded
by MCIN/AEI/10.13039/501100011033.

\printbibliography

\renewcommand{\appendixname}{Supplementary Material}
\appendix[Proof of proposition \ref{prop:order-semi}]
\renewcommand{\thefigure}{A\arabic{figure}}
\label{sec:appendix-JM}

To prove Proposition~\ref{prop:order-semi}, we adapt
the proof of \textcite[Prop. 1]{2021JanssenMurakami},
the analogous result for rooted networks. Their arguments only rely on the
structure of cherries (not on the rest of the network).
This structure is the same here in semidirected networks,
because edges incident to the cherry leaves and the hybrid internal
edge in semidirected reticulate cherries are in fact directed.
Thus, their proof adapts smoothly to our case,
for semidirected (and binary) networks.
We reproduce it here for completeness, along with
the necessary lemmas, as a supplement for lack of novelty.


\begin{lemma}[{\cite[Lemma 6]{2021JanssenMurakami}}]
  \label{lem:order-6}
  Let $N$ be an $\cL$-network on $\cL_0\subseteq\cL$ and let be $(a,b)$ a cherry
  in $N$. Then, the set of cherries in $N$ that are not
  in $N^{(a,b)}$ is contained in $(\{a\}\times\cL_0)\cup(\cL_0\times\{a\})$.
\end{lemma}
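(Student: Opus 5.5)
The plan is to show directly that reducing $(a,b)$ only affects the neighbourhood of $a$ and $b$, and that any cherry destroyed by this reduction must involve $a$. Let $(c,d)$ be a cherry of $N$ that is not a cherry of $N^{(a,b)}$, and argue by contradiction: assume $a\notin\{c,d\}$. First I record what the reduction removes or modifies in each case of Definition~\ref{def:semi-cherry-reduce-net}.
\begin{itemize}
\item For a tree cherry, the leaf $a$ and its edge are deleted, and $p_a=p_b$ may be suppressed. Suppression merges the edge into $b$ with the edge above $p_b$. Alternatively, in type (r2), both edges are removed and $b$ becomes isolated.
\item For a reticulate cherry, $\dbar{p_bp_a}$ is deleted, $p_b$ is possibly suppressed, and $p_a$ is suppressed. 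Suppressing $p_a$ replaces $\dbar{vp_a}$ and $\dbar{p_aa}$ by a single tree edge $\dbar{va}$.
\end{itemize}
All other nodes, edges and directions are unchanged. Node suppression only replaces a two-edge path by one edge, and it keeps the child-end node and the direction into it.

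Next I split according to whether $c$ and $d$ meet the structure that was modified. Suppose neither $p_c$, $p_d$, nor the edges joining them to $c$, $d$ and to each other are among the deleted or suppressed objects. Then $c$ and $d$ keep their parents. The hybrid status of $p_c$ is preserved, because its in-degree is untouched, and the relation "$p_d$ is a parent of $p_c$" is preserved. So $(c,d)$ is still a cherry, which is a contradiction. Hence some node among $p_c,p_d$ lies in $\{p_a,p_b\}$ or is suppressed.

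Now use $a\notin\{c,d\}$. Leaves have a single parent, so the only nodes of $N$ adjacent to $a$ or $b$ are $p_a$ and $p_b$, and these have degree $3$. In the tree case, $p_c\in\{p_a\}$ forces $\{c,d\}\subseteq\{a,b\}$ or a reticulate cherry into $p_a$. The latter is impossible, since $p_a$ is a tree node whose children are $a$ and $b$. In the reticulate case, $p_a$'s only leaf child is $a$, so $p_a\notin\{p_c,p_d\}$ unless $a\in\{c,d\}$ or $p_a=p_d$ with $p_c$ a child of $p_a$. That second option is impossible because $p_a$'s only child is $a$. The remaining possibility is that $p_b$ is involved, as $p_c=p_b$ or $p_d=p_b$, which forces $b\in\{c,d\}$. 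If $p_d=p_b$ with $d=b$ and $(c,b)$ reticulate, then $p_c$ is a hybrid child of $p_b$, so $p_c=p_a$ and $c=a$, a contradiction. If $p_c=p_b$, then $p_b$ is not hybrid and $(c,b)$ would be a tree cherry with $c$ a second leaf child of $p_b$. But the children of $p_b$ are only $b$ and $p_a$ (or $a$ in the tree case). In every branch we get $a\in\{c,d\}$.

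The main obstacle I expect is the bookkeeping for suppressions of $p_b$ in types (d) and (u) and for the (r2)/(r3) root cases. After suppression, $b$'s new parent is the former neighbour $w$ of $p_b$, so I must check that cherries $(c,d)$ with $p_d=w$ or $p_c=w$ can only be gained, not lost. I would handle this by noting that in $N$ the node $w$ is not adjacent to any leaf among $c,d$ through the suppressed path, because its child there is $p_b$, which is not a leaf. The same care is needed with parallel edges, as in $N_2$. There, reducing $(a,b)$ produces an isolated-component situation, and I would treat it via Corollary~\ref{coro:mu-parallel}'s configuration. In that configuration every cherry involves $a$ or $b$, and those with $b$ alone require $c=a$.
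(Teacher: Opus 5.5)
Your forward strategy differs from the paper's. The paper argues backwards through Proposition~\ref{prop:reduce-add}: the addition that rebuilds $N$ from $N^{(a,b)}$ only adds $a$ or subdivides its parent edge, and only creates, subdivides, or (in type (r3)) adds an edge beside the parent edge of $b$. None of these can create a cherry avoiding $a$. Your first step is sound: if none of $p_c$, $p_d$, $p_cc$, $p_dd$, $p_dp_c$ is deleted or suppressed, then $(c,d)$ survives, because suppression preserves in-degrees and parent relations at surviving endpoints.

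The gap comes next. You weaken this to ``$p_c$ or $p_d$ lies in $\{p_a,p_b\}$'' and claim that this forces $a\in\{c,d\}$. For that you rely on two facts: in the tree case $p_a=p_b$ has only the children $a,b$, and in the reticulate case $p_b$ has only the children $b$ and $p_a$. Both facts are false for type (r3) cherries, where $p_b$ is an unresolved root with a third outgoing edge to some node $x$. For example, if $N$ is a single degree-3 root with leaf children $a,b,x$, then $(b,x)$ is a cherry whose parents equal $p_a=p_b$, yet $a\notin\{b,x\}$. Likewise, in an R(r3) configuration where $x$ is a hybrid node with leaf child $y$, the pair $(y,b)$ is a reticulate cherry with $p_b$ as the parent of $p_y$, again avoiding $a$. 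So ``in every branch we get $a\in\{c,d\}$'' is false: these cherries exist, and what must be shown is that they are not lost. You postpone the (r3) root cases to ``bookkeeping'' in your last paragraph but never treat them, and your explicit claims contradict them.

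The repair is to keep the finer conclusion of your first step rather than weakening it to membership in $\{p_a,p_b\}$. In types T(r3) and R(r3), $p_b$ is neither deleted nor suppressed; it becomes a resolved root. The only deleted or suppressed objects are then $a$ and $\dbar{p_ba}$ in T(r3). In R(r3) they are $\dbar{p_bp_a}$, $p_a$, $\dbar{vp_a}$ and $\dbar{p_aa}$. If any of these is among $p_c$, $p_d$, $p_cc$, $p_dd$, $p_dp_c$, then $a\in\{c,d\}$, because $p_a$'s only child is $a$. Cherries through the third child of $p_b$ are already covered by your first step. In types (r2), (d) and (u) your claims about the children of $p_b$ are correct, though $p_b$ has degree $2$, not $3$, in (r2).

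Finally, invoking Corollary~\ref{coro:mu-parallel} for parallel edges is roundabout, since it concerns $\mu$-representations. That configuration is simply an R(r3) cherry with $x=p_a$, i.e.\ the component $N_2$, whose only cherry is $(a,b)$.
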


\begin{proof}
  Let C be the type of $(a,b)$ in $N$, and let $(c,d)$ be a cherry in $N$ that
  is not a cherry in $N^{(a,b)}$.
  For the sake of contradiction,
  suppose that $c\neq a$ and $d\neq a$. Then adding the cherry $(a,b)$ of type C
  to $N^{(a,b)}$ may only create a parent edge 
  or subdivide the parent edge to $b$,
  and either adds leaf $a$ or subdivides the parent edge to $a$.
  Neither operation changes the fact that $(c,d)$ is not a cherry,
  a contradiction.
\end{proof}

\begin{lemma} [{{\cite[Lemma 7]{2021JanssenMurakami}}}]
  \label{lem:order-7}
  Let $S=s_1\ldots s_k$ be a complete reduction sequence for an
  orchard $\cL$-network $N$ such that $s_i=(a,b)$ is a tree cherry
  in $N^{s_1\ldots s_{i-1}}$.
  Then, $S'=s_1\ldots s_{i-1}(b,a)s_{i+1}'\ldots s_k'$ is a complete reduction
  sequence for $N$, where $s_j'$ is obtained by replacing $b$ by $a$ in $s_j$
  for each $j\in\{i+1,\ldots,k\}$.
\end{lemma}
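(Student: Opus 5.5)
The key observation is that when $(a,b)$ is a tree cherry of $N':=N^{s_1\ldots s_{i-1}}$, with common parent $p$, the two reductions $N'^{(a,b)}$ and $N'^{(b,a)}$ produce isomorphic networks, differing only in the label of the surviving leaf. In $N'^{(a,b)}$ leaf $b$ survives, and in $N'^{(b,a)}$ leaf $a$ survives in exactly the same position. So I will first show that $(b,a)$ is a tree cherry of $N'$: this is immediate from Definition~\ref{def:semi-net-cherry}, since $p_a=p_b$ is symmetric. It has the same type as $(a,b)$, because the type (Definition~\ref{def:semi-net-cherrytypes}) depends only on $p$ and its third incident edge.

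Next I will check case by case, following Definition~\ref{def:semi-cherry-reduce-net}, that $N'^{(b,a)}$ is obtained from $N'^{(a,b)}$ by the relabelling $\sigma$ that replaces $b$ by $a$.
\begin{itemize}
\item \textbf{Case T(r2).} Both reductions delete one leaf and suppress the degree-1 root $p$. This leaves an isolated node, labelled $b$ or $a$ respectively.
\item \textbf{Case T(r3).} $p$ becomes a resolved root with a single remaining leaf child, the same in both.
\item \textbf{Cases T(d) and T(u).} $p$ is suppressed and the new edge from the other neighbour $w$ of $p$ goes to the surviving leaf. Its direction is set by Definition~\ref{def:suppress} using only the edge $wp$ and the directed leaf edge, so it is identical in both.
\end{itemize}
Hence $N'^{(b,a)}=\sigma(N'^{(a,b)})$, where $\sigma$ acts on $\varphi$ only. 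The label $a$ is absent from $N'^{(a,b)}$, so $\sigma$ is injective on the labels in use.

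Then I will argue by induction on $j\ge i+1$ that relabelling commutes with cherry reduction. If $(c,d)$ is a cherry of type C in a network $M$ in which $a$ is not a label, then $\sigma(c,d)$ is a cherry of the same type in $\sigma(M)$ and $\sigma(M)^{\sigma(c,d)}=\sigma(M^{(c,d)})$. This holds because cherries, their types and their reductions are defined purely structurally, with labels entering only through $\varphi$. Also, once $a$ is removed, $a$ never reappears as a label in later reductions, since reductions only delete leaves. Applying this iteratively with $s_j'=\sigma(s_j)$ shows that $S'$ is a reduction sequence and that $N^{S'}=\sigma(N^{S})$.

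Finally, $N^S$ is a trivial forest, so its relabelling $\sigma(N^S)$ is also a trivial forest, on $\sigma(\cL_0')$. Therefore $S'$ is complete. The only point requiring care is the suppression-direction bookkeeping in the T(d)/T(u) cases of the second paragraph. Even there the symmetry of $a$ and $b$ makes the two constructions literally equal up to $\varphi$, so I expect no real obstacle.
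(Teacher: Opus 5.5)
Your proposal is correct and follows the same route as the paper. The paper's proof also observes that $\widetilde{N}^{(a,b)}$ and $\widetilde{N}^{(b,a)}$ coincide up to replacing $b$ by $a$, and that, since $a$ does not occur in $s_{i+1}\ldots s_k$, relabelling this tail gives a complete reduction sequence for $\widetilde{N}^{(b,a)}$; your case check over T(r2), T(r3), T(d), T(u) and your explicit argument that relabelling commutes with cherry reduction spell out what the paper leaves implicit.
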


\begin{proof}
  Let $\widetilde{N}=N^{s_1\ldots s_{i-1}}$. As $(a,b)$ is a tree cherry
  in $\widetilde{N}$, 
  $\widetilde{N}^{(a,b)}$ is equal to 
  $\widetilde{N}^{(b,a)}$ if $b$ is replaced by $a$. Then
  as $\tilde S=s_{i+1}\ldots s_k$ is a complete reduction sequence
  for $\widetilde{N}^{(a,b)}$ and $a$ does not appear in $\tilde S$.
  Therefore, replacing $b$ by $a$ in $\tilde S$ gives a complete reduction sequence
  for $\widetilde{N}^{(b,a)}$.
\end{proof}

\begin{lemma} [{{\cite[Lemma 8]{2021JanssenMurakami}}}]
  \label{lem:order-8}
  Let $S=s_1\ldots s_k$ be a complete reduction sequence for an
  orchard $\cL$-network $N$ such that $s_2$ is a cherry in $N$. Then, $N^{s_2}$
  is orchard.
\end{lemma}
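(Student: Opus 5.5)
The plan is to show that $s_1$ and $s_2$ commute, in the sense of Janssen and Murakami. Write $s_1=(a,b)$ and $s_2=(c,d)$, and let $\widetilde N = N^{s_2}$. We know that $s_3\ldots s_k$ is a complete reduction sequence for $N^{s_1s_2}$. The goal is to exhibit a complete reduction sequence for $\widetilde N$. The natural candidate is $s_1 s_3\ldots s_k$, possibly with one label replaced, and showing that it works is the whole content of the lemma.

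First case: $s_1$ is still a cherry of $N^{s_2}$, and $N^{s_2s_1}=N^{s_1s_2}$. Then $s_1s_3\ldots s_k$ is complete for $N^{s_2}$ and we are done. By Lemma~\ref{lem:order-6}, applied to the cherry $s_2$ of $N$, $s_1$ fails to be a cherry of $N^{s_2}$ only if $c\in\{a,b\}$. The cherry $(c,d)$ reduces only the local structure at $c$ (the pendant edge of $c$, the parent $p_c$ and possibly the internal edge), while $(a,b)$ touches $a$, $b$, $p_a$ and $p_b$. So when $c\notin\{a,b\}$ and the local structures are disjoint, or share only $b$ or $d$, the two reductions act on disjoint sets of edges apart from suppressions. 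We will check that the suppressions of $p_b$ and $p_d$ give the same result in either order, including when $p_b=p_d$ is a root that becomes resolved or suppressible. We will do this case by case on the types (r2), (r3), (d), (u), using Definitions~\ref{def:suppress} and~\ref{def:semi-cherry-reduce-net}, and conclude that the two orders commute.

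Second case: $c\in\{a,b\}$ or $d=a$.
\begin{itemize}
\item If $s_1=s_2$, or $s_2=(b,a)$ for a tree cherry, then Lemma~\ref{lem:order-7} handles the relabelling. If $(a,b)$ is a tree cherry of $N$, then $N^{(b,a)}$ is $N^{(a,b)}$ with $b$ relabelled $a$. So $N^{s_2}$ is obtained from $N^{s_1}$ after relabelling, and $s_2s_3\ldots s_k$ with the substitution is complete.
\item If $c=a$ and $d\neq b$ with $s_1$ a tree cherry, then $p_a=p_b$ has children $a$ and $b$, so $(a,d)$ must be reticulate with $p_a$ hybrid. This is impossible because $p_a$ is a tree node. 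Symmetric impossibilities eliminate most of the remaining configurations.
\item In the genuine remaining configurations, both $(a,b)$ and $(a,d)$ are reticulate cherries with the hybrid $p_a$ having parents $p_b$ and $p_d$. Here $N^{(a,d)}$ contains the tree cherry $(a,b)$, up to the created structure, and $N^{(a,b)}$ contains the tree cherry $(a,d)$. Both double reductions $N^{(a,b)(a,d)}$ and $N^{(a,d)(a,b)}$ yield the same network, where $a$ hangs from one of them. We then apply Lemma~\ref{lem:order-7} to swap labels and transfer completeness.
\item Configurations with $d=a$ or $c=b$ are handled similarly: they force $s_2$ to be a cherry of $N^{s_1}$ with a common double reduction.
\end{itemize}

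The main obstacle is the first case in the semidirected setting. Suppression rules depend on whether $p_b$ is a root component and on edge directions, so one must verify that when $p_b=p_d$, or when $p_b$ and $p_d$ are adjacent through an undirected edge, the resulting edge orientations and resolved or unresolved root statuses agree in both orders. I would handle it by first proving the commutation claim $N^{s_1s_2}=N^{s_2s_1}$ whenever both are defined and $\{a,b\}\cap\{c,d\}\subseteq\{b\}\cap\{d\}$, checking only the node $p_b=p_d$ through the degree-based definitions. The argument only uses the local shape of cherries, so the rooted proof of Janssen and Murakami carries over.
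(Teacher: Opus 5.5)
Your overall route is the paper's: split according to how $\{a,b\}$ and $\{c,d\}$ overlap, and in each surviving case show that the two reductions commute, with $b$ relabelled $d$ when $c=b$. The disjoint and $d=b$ cases and the case $s_1=s_2$ are fine. Your relabelling treatment of $s_2=(b,a)$ via Lemma~\ref{lem:order-7} is also valid; the paper instead notes that this case cannot occur.

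The overlapping cases, however, contain a false step and a misdescribed case. In your second bullet you claim that if $s_1=(a,b)$ is a tree cherry and $c=a$, then $(a,d)$ must be reticulate. This is false here: $p_a=p_b=p_d$ may be an unresolved root of degree~$3$ with children $a,b,d$, and then $(a,b)$ and $(a,d)$ are both tree cherries of type T(r3). The case is still impossible, but for a reason you never invoke. Reducing a tree cherry $(a,b)$ deletes $a$, so $(a,d)$ cannot be a cherry of $N^{s_1}$, contradicting that $S$ is a reduction sequence. The same argument disposes of $d=a$. There, $(c,a)$ being a cherry forces $p_a$ to be a tree node, so $s_1$ is a tree cherry and $a$ is absent from $N^{s_1}$. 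So your claim that $d=a$ yields ``a common double reduction'' is wrong: that configuration simply does not arise.

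For $c=b$, showing that $s_2$ is a cherry of $N^{s_1}$ is not what is needed; that is the hypothesis. You need the converse, with a relabelling:
\begin{itemize}
\item Since $b$ is the second leaf of $s_1$, $p_b$ is a tree node.
\item So $(b,d)$ is a tree cherry with $p_b=p_d$, an unresolved root with three children.
\item Then $(a,d)$ is a cherry of $N^{s_2}$ and $N^{s_2(a,d)}=N^{s_1s_2}$.
\item Hence $(a,d)s_3\ldots s_k$ is complete for $N^{s_2}$.
\end{itemize}

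Finally, when $(a,b)$ and $(a,d)$ are both reticulate, the two double reductions coincide outright. In both orders the second reduction is a tree-cherry reduction that deletes $a$, so $a$ does not ``hang'' anywhere. Thus $(a,b)s_3\ldots s_k$ is already complete for $N^{s_2}$, and Lemma~\ref{lem:order-7} is not needed. With these repairs your argument becomes the paper's proof.
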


\begin{proof}
  Note that $s_1=(a,b)$ and $s_2=(c,d)$ are cherries in $N$ by assumption.
  Let $p_a,p_b,p_c$ and $p_d$ be the parents of $a,b,c$ and $d$
  respectively (which must exist).
  We consider the following cases:
  \begin{itemize}
    \item $s_1=s_2$ (e.g. in $N_2$ from Fig.~\ref{fig:specialretcherry} with parallel edges).
    Then, $N^{s_1}=N^{s_2}$ is orchard.
    \item $s_2=(b,a)$. Then, both $(a,b)$ and $(b,a)$ are cherries in $N$, and thus
          they must be tree cherries in $N$. Therefore, the leaf $a$ is not present
          in $N^{s_1}$, hence $s_2$ is not a cherry in $N^{s_1}$ and $S$ is not
          a reduction sequence: this case cannot occur.
    \item $c=a$ and $d\neq a,b$. Then, $(a,b)$ and $(a,d)$ are reticulate cherries
          in $N$,
          $p_a$ is a hybrid node and its two hybrid incident edges
          are $p_bp_a$ and $p_dp_a$. Consequently, reducing $s_1$ and then $s_2$
          is the same as reducing $s_2$ and then $s_1$. Therefore,
          as $N^{s_1s_2}=N^{s_2s_1}$ is orchard, we have that $N^{s_2}$ is
          orchard too.
    \item $d=a$ and $c\neq a,b$.
          As $s_2=(c,a)$ is a cherry in $N$, $p_a$ must be a tree node, and then
          $s_1=(a,b)$ must be a tree cherry. But then $a$ is absent
          from $N^{s_1}$ and $s_2$ cannot be a cherry in $N^{s_1}$: a contradiction.
          Therefore this case cannot occur.
    \item $c=b$ and $d\neq a,b$. As $s_1=(a,b)$ is a cherry in $N$, $p_b$ is a
          tree node. Then, as $s_2=(b,d)$ is a also a cherry in $N$, it must be
          a tree cherry and $p_b=p_d$. Therefore, $p_b$ is a root of degree 3.
          By reducing $(a,b)$ and then $(b,d)$ we obtain an isolated leaf labelled $d$.
          If instead we first reduce $s_2=(b,d)$, then $p_b$ is left
          unsuppressed in $N^{s_2}$, so $(a,d)$ is a cherry in $N^{s_2}$
          of the same type as $(a,b)$ in $N$. Reducing
          it we obtain the same network as before:
          $N^{s_2s_1} = N^{s_1s_2}$. Therefore $N^{s_2}$ is orchard.
    \item $d=b$ and $c\neq a,b$. As $s_1=(a,b)$ and $s_2=(c,b)$ are cherries in $N$,
          $p_b$ is adjacent to both cherries. Therefore, $p_b$ is a root of
          degree 3 and, again, we obtain the same result if we reduce
          first $s_1$ and then $s_2$ or first $s_2$ and then $s_1$, so $N^{s_2}$
          is orchard.
    \item $c\neq a,b$ and $d\neq a,b$. Then, the reduction of one cherry does not
    affect the other, so again we obtain $N^{s_1s_2}=N^{s_2s_1}$ and
    hence $N^{s_2}$ is orchard.
  \end{itemize}
  In all cases, we concluded that $N^{s_2}$ is orchard.
\end{proof}

\begin{lemma} [{\cite[Lemma 9]{2021JanssenMurakami}}]
  \label{lem:order-9}
  Let $N$ be an orchard $\cL$-network, and let $(a,b)$ be a cherry in $N$. Then,
  there exists a complete reduction sequence $S=s_1\ldots s_k$ for $N$ such
  that $s_i=(a,b)$ or $s_i=(b,a)$, and such that $(a,b)$ is a cherry
  in $N^{s_1\ldots s_j}$ for every $j\in[i-1]$.
\end{lemma}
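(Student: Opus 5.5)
We follow the strategy of \cite[Lemma 9]{2021JanssenMurakami}. Start from an arbitrary complete reduction sequence $S=s_1\ldots s_k$ for $N$, which exists because $N$ is orchard. Let $i$ be the smallest index such that $(a,b)$ is not a cherry in $N^{s_1\ldots s_i}$; such an index exists because $N^S$ is a trivial forest, so it has no cherries. Then $(a,b)$ is a cherry in $N^{s_1\ldots s_j}$ for every $j\le i-1$, and the remaining task is to make $s_i$ equal to $(a,b)$ or $(b,a)$ without disturbing the prefix $s_1\ldots s_{i-1}$.

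Set $\widetilde N=N^{s_1\ldots s_{i-1}}$. Then $(a,b)$ is a cherry in $\widetilde N$ but not in $\widetilde N^{s_i}$. By Lemma~\ref{lem:order-6}, applied to $\widetilde N$ with cherry $s_i$, the disappearing cherry $(a,b)$ must involve the first coordinate of $s_i$. Write $s_i=(x,y)$. Lemma~\ref{lem:order-6} gives $x\in\{a,b\}$, and we split into cases.

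\textbf{Case $x=a$.} If $y=b$, we are done. If $y\neq b$, then $a$ lies in two cherries $(a,b)$ and $(a,y)$ of $\widetilde N$. If both are reticulate, they share the hybrid parent $p_a$. If $(a,y)$ is a tree cherry, then $p_a=p_y$ is a tree node, so $(a,b)$ is also a tree cherry with $p_b=p_a$; this gives a degree-3 root with children $a,b,y$.

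\begin{itemize}
\item If $s_i$ is a tree cherry, Lemma~\ref{lem:order-7} lets us replace $(a,y)$ by $(y,a)$ and rename $y$ to $a$ in the suffix. In the resulting sequence $a$ survives, and $(a,b)$ remains a cherry after step $i$. In that case we increase $i$ and iterate.
\item If $s_i$ is reticulate, we use the commutation argument from the proof of Lemma~\ref{lem:order-8}: $(a,b)$ and $(a,y)$ commute, so $\widetilde N^{(a,b)(a,y)}=\widetilde N^{(a,y)(a,b)}$. Reordering gives a complete sequence whose $i$-th term is $(a,b)$. More precisely, Lemma~\ref{lem:order-8} shows that $\widetilde N^{(a,b)}$ is orchard, because the suffix after $s_i$ completes $\widetilde N^{s_i(a,b)}$ and the two reductions commute.
\end{itemize}

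\textbf{Case $x=b$.} Then $s_i=(b,y)$ with $y\neq a$ is a tree cherry, because $p_b$ is a tree node. Hence $p_b=p_y$ is a degree-3 root carrying $a$ (or $p_a$) and $y$. We apply Lemma~\ref{lem:order-7} to swap to $(y,b)$, which keeps $b$ in the network. Alternatively, as in the case $c=b$ of Lemma~\ref{lem:order-8}, reducing $(a,b)$ first and then the renamed cherry gives the same network.

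To make the iteration terminate, we induct on $k-i$ and choose $S$ maximizing $i$. Each transformation either places $(a,b)$ or $(b,a)$ at position $i$, or produces a new complete sequence in which $(a,b)$ survives at least one more step. Since the length $k$ is bounded by the number of edges plus leaves, this process must end.

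The main obstacle is the case analysis when $s_i$ shares a leaf with $(a,b)$ and the types are mixed. In particular, we must show that after swapping via Lemma~\ref{lem:order-7} the cherry $(a,b)$ really persists in $\widetilde N^{s_i'}$. This is delicate for type (r3) roots, where reducing a cherry leaves $p_b$ unsuppressed. It is also delicate for the parallel-edge configuration $N_2$, where $(a,b)$ and $(b,a)$ are both reticulate cherries and Lemma~\ref{lem:order-7} does not apply; there we would invoke Proposition~\ref{prop:parallel} to note that the reduction of that component is forced.
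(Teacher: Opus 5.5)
Your overall architecture matches the paper's proof. You choose a complete sequence $S$ maximizing the first index $i$ at which $(a,b)$ stops being a cherry. You use Lemma~\ref{lem:order-6} to get $s_i=(a,y)$ or $s_i=(b,y)$. In the case $s_i=(b,y)$ you swap to $(y,b)$ via Lemma~\ref{lem:order-7}, so that $(a,b)$ survives one more step.

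\textbf{The gap: the sub-case $s_i=(a,y)$ reticulate.} You argue that $(a,b)$ and $(a,y)$ commute, and that the suffix completes $\widetilde N^{s_i(a,b)}$. But by the choice of $i$, $(a,b)$ is \emph{not} a cherry of $\widetilde N^{s_i}$, so $\widetilde N^{s_i(a,b)}$ is undefined and there is nothing to commute. Lemma~\ref{lem:order-8} does not help either: it needs a complete sequence for $\widetilde N$ whose \emph{second} term is $(a,b)$, and that is exactly what fails here.

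\textbf{The missing structural step.} Suppose $p_b\neq p_y$. Then reducing $(a,y)$ deletes $\dbar{p_y p_a}$ and suppresses $p_a$, which makes $p_b$ the parent of both $a$ and $b$. So $(a,b)$ would still be a (tree) cherry of $\widetilde N^{s_i}$, a contradiction. Hence $p_b=p_y$ is an unresolved root with children $b$, $y$, $p_a$. The two reticulate cherries then share the internal edge $\dbar{p_b p_a}$, so $\widetilde N^{(a,y)}=\widetilde N^{(a,b)}$ literally. You can therefore replace $s_i$ by $(a,b)$ and keep the suffix unchanged. The same identity holds in your tree sub-case, which is how the paper treats both together; your Lemma~\ref{lem:order-7} swap there is also valid.

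\textbf{Two smaller points.} First, the ``main obstacle'' you leave open is not delicate, and you should simply state it. In each swap case the common parent is a degree-3 root. Reducing $(y,a)$ or $(y,b)$ just deletes $y$, leaving a resolved root that is still the parent of $b$ and of $a$ (or of $p_a$). So $(a,b)$ persists, now of type (r2). Second, your remark about $N_2$ is wrong: there $p_b$ is a root, not a hybrid node, so $(b,a)$ is not a cherry at all. The only cherry involving $a$ or $b$ is $(a,b)$, so $s_i=(a,b)$ is forced and no appeal to Proposition~\ref{prop:parallel} is needed.
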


\begin{proof}
  Assume that there is no sequence with the desired property.
  If $S=s_1\ldots s_k$ is a complete reduction sequence for $N$,
  define $i(S)$ as the smallest index such that $(a,b)$ is not a
  cherry in $N^{s_1\ldots s_{i}}$. By assumption, $i(S)\geq 2$.
  Let $S=s_1\ldots s_k$ be a complete reduction sequence for $N$,
  such that $i(S)$ is maximum. To arrive to a contradiction,
  we will build a complete reduction sequence $S'$ that has either
  the desired property, or $i(S')>i(S)$.
  We denote $N_j=N^{s_1\ldots s_j}$ for every $j\in[k]$ and $i=i(S)$.
  Then by Lemma~\ref{lem:order-6},
  $s_i=(a,c)$ or $s_i=(b,c)$ for some $c\neq a,b$.
  We claim that $(a,b), (a,c)$ and $(b,c)$ are cherries of type (r3)
  in $N_{i-1}$, and $(b,c)$ is a tree cherry. To prove it, let $p_a$, $p_b$ and
  $p_c$ denote the parents of $a$, $b$ and $c$ in $N_{i-1}$, respectively.
  \begin{itemize}
    \item If $s_i=(a,c)$, then we already know that $(a,b)$ and $(a,c)$ are
      cherries in $N_{i-1}$. If $p_a$ is a tree node,
      then necessarily $p_a=p_b=p_c$, so $(a,b),(a,c)$
      and $(b,c)$ are tree cherries of type T(r3). Otherwise, we may have
      that $p_b=p_c$ or not. If they are equal, then $(a,b)$ and $(a,c)$ are
      reticulate cherries of type R(r3), and $(b,c)$ is a tree cherry of
      type T(r3). If $p_b\neq p_c$, then reducing $s_i=(a,c)$
      causes the suppression of $p_a$ such that $(a,b)$ becomes a tree
      cherry in $N_i$, which is a contradiction.
    \item If $s_i=(b,c)$, then we already know that $(a,b)$ and $(b,c)$ are
      cherries in $N_{i-1}$. From $(a,b)$ being a cherry, $p_b$ must be a
      tree node, and so $(b,c)$ must be a tree cherry.
      As $p_b=p_c$ has children $b$, $c$ and either $a$ or $p_a$,
      it is an unresolved root, 
      and so $(a,b)$ and $(b,c)$ are of type (r3). Also, $p_b=p_c$ implies that
      $(a,c)$ is a cherry of the same type as $(a,b)$, concluding the case.
  \end{itemize}

  If $s_i=(a,c)$, then $N_i=N_{i-1}^{(a,b)}$. So replacing $s_i$
  by $s'_i=(a,b)$ in $S$ gives a new complete reduction sequence $S'$ for $N$,
  which has the desired property: $s'_i=(a,b)$
  and $(a,b)$ is a cherry of the network at each previous step.

  If instead $s_i=(b,c)$, then
  $N_{i-1}^{(b,c)}$ is equal to $N_{i-1}^{(c,b)}$ in which $b$ is replaced by $c$,
  as $(b,c)$ is a tree cherry.
  Also, $S'=s_1\ldots s_{i-1}(c,b)s_{i+1}'\ldots s_k'$ is a complete 
  reduction sequence for $N$, where $s_j'$ is obtained by replacing $c$ by $b$
  in $s_j$ for all $j\in\{i+1,\ldots,k\}$.
  Since $(a,b)$ is a cherry in $N_{i-1}^{(c,b)}$, we have that
  $i(S')\geq i+1$.

  In either case, we obtain a contradiction, as claimed.
\end{proof}

\bigskip

We are now ready to prove Proposition~\ref{prop:order-semi},
the analog of Proposition~1 from \cite{2021JanssenMurakami} for
semidirected (binary) networks.

\begin{proof}[Proof of Proposition~\ref{prop:order-semi}]
  Let $(a,b)$ be a cherry in an orchard $\cL$-network $N$.
  By Lemma~\ref{lem:order-9}, there exists a complete reduction sequence $S=s_1\ldots s_k$
  for $N$ such that one of the pairs of $S$ is either $s_i=(a,b)$
  or $s_i=(b,a)$, and $(a,b)$ is a cherry in $N^{s_1\ldots s_j}$ for every $j\in[i-1]$.
  If $i=1$, then $s_2\ldots s_k$ is a complete reduction sequence
  for $N^{(a,b)}$, hence $N^{(a,b)}$ is orchard. Otherwise, we can assume that
  $S$ is such that the index $i$ is the smallest possible,
  with the properties above.

  \smallskip\noindent
  If $s_i=(a,b)$, then, we can apply Lemma~\ref{lem:order-8} $i$ times,
  to get that $N^{(a,b)}$ is orchard.

  \smallskip\noindent
  If $s_i=(b,a)$, then $(a,b)$ and $(b,a)$ are both cherries
  in $N^{s_1\ldots s_j}$ for every $j\in[i-1]$, so they must be
  tree cherries. By Lemma~\ref{lem:order-7}, there exists a complete
  reduction sequence $S'$ for $N$ starting
  with $s_1\ldots s_{i-1}(a,b)$. Then, changing $S$ for $S'$ we can
  apply the previous case and thus $N^{(a,b)}$ is orchard,
  which completes the proof.
\end{proof}

\end{document}